\begin{document}
\baselineskip=15pt

\numberwithin{equation}{section}

\newtheorem{thm}{Theorem}[section]
\newtheorem{lem}[thm]{Lemma}
\newtheorem{cor}[thm]{Corollary}
\newtheorem{Prop}[thm]{Proposition}
\newtheorem{Def}[thm]{Definition}
\newtheorem{Rem}[thm]{Remark}
\newtheorem{Ex}[thm]{Example}

\newcommand{\A}{\mathbb{A}}
\newcommand{\B}{\mathbb{B}}
\newcommand{\C}{\mathbb{C}}
\newcommand{\D}{\mathbb{D}}
\newcommand{\E}{\mathbb{E}}
\newcommand{\F}{\mathbb{F}}
\newcommand{\G}{\mathbb{G}}
\newcommand{\I}{\mathbb{I}}
\newcommand{\J}{\mathbb{J}}
\newcommand{\K}{\mathbb{K}}
\newcommand{\M}{\mathbb{M}}
\newcommand{\N}{\mathbb{N}}
\newcommand{\Q}{\mathbb{Q}}
\newcommand{\R}{\mathbb{R}}
\newcommand{\T}{\mathbb{T}}
\newcommand{\U}{\mathbb{U}}
\newcommand{\V}{\mathbb{V}}
\newcommand{\W}{\mathbb{W}}
\newcommand{\X}{\mathbb{X}}
\newcommand{\Y}{\mathbb{Y}}
\newcommand{\Z}{\mathbb{Z}}
\newcommand\ca{\mathcal{A}}
\newcommand\cb{\mathcal{B}}
\newcommand\cc{\mathcal{C}}
\newcommand\cd{\mathcal{D}}
\newcommand\ce{\mathcal{E}}
\newcommand\cf{\mathcal{F}}
\newcommand\cg{\mathcal{G}}
\newcommand\ch{\mathcal{H}}
\newcommand\ci{\mathcal{I}}
\newcommand\cj{\mathcal{J}}
\newcommand\ck{\mathcal{K}}
\newcommand\cl{\mathcal{L}}
\newcommand\cm{\mathcal{M}}
\newcommand\cn{\mathcal{N}}
\newcommand\co{\mathcal{O}}
\newcommand\cp{\mathcal{P}}
\newcommand\cq{\mathcal{Q}}
\newcommand\rr{\mathcal{R}}
\newcommand\cs{\mathcal{S}}
\newcommand\ct{\mathcal{T}}
\newcommand\cu{\mathcal{U}}
\newcommand\cv{\mathcal{V}}
\newcommand\cw{\mathcal{W}}
\newcommand\cx{\mathcal{X}}
\newcommand\ocd{\overline{\cd}}

\def\c{\centerline}
\def\ov{\overline}
\def\emp {\emptyset}
\def\pa {\partial}
\def\bl{\setminus}
\def\op{\oplus}
\def\sbt{\subset}
\def\un{\underline}
\def\al {\alpha}
\def\bt {\beta}
\def\de {\delta}
\def\Ga {\Gamma}
\def\ga {\gamma}
\def\lm {\lambda}
\def\Lam {\Lambda}
\def\om {\omega}
\def\Om {\Omega}
\def\sa {\sigma}
\def\vr {\varepsilon}
\def\va {\varphi}

\title{\bf Existence and concentration of ground state solutions for a critical nonlocal Schr\"{o}dinger equation in $\R^2$}

\author{Claudianor O. Alves\thanks{Partially supported by CNPq/Brazil
304036/2013-7, coalves@dme.ufcg.edu.br}\\
{\small  Universidade Federal de Campina Grande} \\ {\small Unidade Acad\^emica de Matem\'{a}tica} \\ {\small CEP: 58429-900, Campina Grande - Pb, Brazil}
\\
\\
Daniele Cassani\thanks{daniele.cassani@uninsubria.it}\vspace{2mm}\\
{\small Dip. di Scienza e Alta Tecnologia, Universit\`{a} degli Studi dell' Insubria}
 \\ {\small via Valleggio 11, 22100 Como, Italy}
\\
\\
Cristina Tarsi\thanks{cristina.tarsi@unimi.it}\vspace{2mm}\\
{\small Dip. di Matematica "F. Enriques", Universit\`{a} degli Studi di Milano}
 \\ {\small via C. Saldini 50, 20133 Milano, Italy}
\\
\\
Minbo Yang\thanks{*Corresponding author, partially supported by NSFC (11571317, 11101374, 11271331) and ZJNSF(LY15A010010), mbyang@zjnu.edu.cn}\vspace{2mm}
\\
{\small  Department of Mathematics, Zhejiang Normal University} \\ {\small  Jinhua, Zhejiang, 321004, P. R. China.}}

\date{}
\maketitle

\begin{abstract}
We study the following singularly perturbed nonlocal Schr\"{o}dinger equation
$$
-\vr^2\Delta u +V(x)u =\vr^{\mu-2}\Big[\frac{1}{|x|^{\mu}}\ast F(u)\Big]f(u)  \quad \mbox{in} \quad \R^2,
$$
where $V(x)$ is a continuous real function on $\R^2$, $F(s)$ is the primitive of $f(s)$, $0<\mu<2$ and $\vr$ is a positive parameter. Assuming that the nonlinearity $f(s)$ has critical exponential growth in the sense of Trudinger-Moser, we establish  the existence and concentration of solutions by variational methods.

 \vspace{0.3cm}
\noindent{\bf Mathematics Subject Classifications (2010):} 35J20,
35J60, 35B33

\vspace{0.3cm}

 \noindent {\bf Keywords:} Schr\"{o}dinger equations; Nonlocal elliptic equations; Critical exponential growth; Trudinger-Moser inequality; Semiclassical states.
\end{abstract}

\section{Introduction and main results}
The nonlocal elliptic equation
$$
\aligned &-\varepsilon^2\Delta u +V(x)u  =\vr^{\mu-N}\Big[\frac{1}{|x|^{\mu}}\ast F(u)\Big]f(u)  \quad \mbox{in} \quad \R^N,
\endaligned\eqno{(SNS)}
$$
the so-called Choquard equation when $N=3$, appears in the theory of Bose-Einstein condensation and is used to describe the finite-range many-body interactions between particles. Here  $V(x)$ is the external potential, $F(s)$ is the primitive of the nonlinearity $f(s)$ and the parameters $\varepsilon>0$,  $0<\mu<N$.  For $\mu=1$ and $F(s)=\frac12|s|^{2}$, equation $(SNS)$ was investigated by S.I. Pekar in \cite{P1} to study the quantum theory of a polaron at rest. In \cite{Li1} P. Choquard suggested to use it as approximation to Hartree-Fock theory of one-component plasma. This equation was also proposed by R. Penrose in \cite{MPT} as a model for selfgravitating particles and it is known in that context as the Schr\"odinger-Newton equation.

Notice that if $u$ is a solution of the nonlocal equation $(SNS)$ and $x_0\in\R^N$, then the function $v=u(x_0+\varepsilon x)$ satisfies
 $$
-\Delta v +V(x_0+\vr x)v  =\Big[\frac{1}{|x|^{\mu}}\ast F(v)\Big]f(v)  \quad \mbox{in} \quad \R^N.
 $$
This suggests some convergence, as $\vr\to0$, of the family of solutions of $(SNS)$ to a solution $u_0$ of the limit problem
\begin{equation}
-\Delta v +V(x_0)v=\Big[\frac{1}{|x|^{\mu}}\ast F(v)\Big]f(v) \quad \mbox{in} \quad \R^N.
\end{equation}
This is known as semi-classical limit for the nonlocal Choquard equation and we refer for a survey to \cite{AM,BF}. The study of semiclassical states for the Schr\"{o}dinger equation
\begin{equation}\label{S.S}
 -\vr^2\Delta u +V(x)u  =g(u)  \quad \mbox{in} \quad \R^N,
\end{equation}
goes back to the pioneer work \cite{FW} by Floer and Weinstein. Since then, it has been studied
extensively under various hypotheses on the potential and the nonlinearity, see for example
\cite{ABC, DL, DXL, FW, GW, JT, DF2, DF1, R, WX} and
the references therein.
In the study of semiclassical problems for local Schr\"{o}dinger equations, the Lyapunov-Schmidt reduction method has been proved to be one of the most powerful tools. However, this technique  relies on the uniqueness and non-degeneracy of the ground states of the limit problem which is not completely settled for the ground states of the nonlocal Choquard equation
\begin{equation}\label{CC}
-\Delta u +u  =\Big[\frac{1}{|x|^{\mu}}\ast F(u)\Big]f(u) \,\,\ \mbox{in} \,\,\, \mathbb{R}^{N}.
\end{equation}
In \cite{ ML, CCS3, MS1}, have been investigated qualitative properties of solutions and established regularity, positivity, radial symmetry and decaying behavior at infinity.  Moroz and Van
Schaftingen in \cite{MS2} established the
existence of ground states under the assumption of Berestycki-Lions type and for the critical equation in the sense of Hardy-Littlewood-Sobolev inequality.
For $N = 3$, $\mu=1$ and $F(s)=\frac12|s|^2$, by proving the uniqueness and non-degeneracy of the ground states, Wei
and Winter \cite{WW} constructed a family of solutions
 by a Lyapunov-Schmidt type reduction when $\inf V>0$.
 In presence of non-constant electric and magnetic potentials, Cingolani et.al. \cite{CCS} showed that there exists a family of solutions
having multiple concentration regions which are localized by the minima of the potential. Moroz and Van Schaftingen \cite{MS3} used
variational methods and developed a nonlocal penalization technique to show that equation $(SNS)$ has a family of solutions
concentrating at the local minimum of $V$ provided $V$ satisfies some additional assumptions at infinity.
In \cite{YD}, Yang and Ding considered the following equation
 $$
\aligned &-\varepsilon^2\Delta u +V(x)u  =\Big[\frac{1}{|x|^{\mu}}\ast u^p\Big]u^{p-1}, \quad \mbox{in} \quad \R^3.
\endaligned
$$
and by using variational methods, they were able to obtain the existence of solutions which vanish at infinity for suitable parameters $p, \mu$. In \cite{AYang}, Alves and Yang proved the existence, multiplicity and concentration of solutions for the same equation by penalization methods and Lusternik-Schnirelmann theory.

\par
Let us recall the following form of the Hardy-Littlewood-Sobolev inequality, see \cite{LL}, which will be frequently used throughout this paper:

\begin{Prop}[Hardy-Littlewood-Sobolev\ inequality]\label{HLS}
Let $s, r>1$ and $0<\mu<N$ with $1/s+\mu/N+1/r=2$. Let $f\in
L^s(\R^N)$ and $h\in L^r(\R^N)$. There exists a sharp constant
$C(s,N,\mu,r)$, independent of $f,h$, such that
$$
\int_{\R^N}[\frac{1}{|x|^{\mu}}\ast f(x)]h(x)\leq
C(s,N,\mu,r) |f|_s|h|_r.
$$
  \end{Prop}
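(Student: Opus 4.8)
This is the classical Hardy--Littlewood--Sobolev inequality, so in the paper itself one simply invokes \cite{LL}; what follows is the route I would take to prove it. The plan is to reduce, by duality, to a mapping property of the Riesz potential, to establish that property by a maximal-function argument, and to treat the sharp constant separately.

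First I would dualize. Writing $r'$ for the conjugate exponent of $r$, the hypothesis $1/s+\mu/N+1/r=2$ is equivalent to $1/r'=1/s+\mu/N-1$, and $r>1$ gives $r'\in(1,\infty)$; hence, by H\"older's inequality, it is enough to prove that the Riesz potential $(I_\mu f)(x):=\int_{\R^N}|x-y|^{-\mu}f(y)\,dy$ obeys $|I_\mu f|_{r'}\leq C\,|f|_s$, since then $\int_{\R^N}(I_\mu f)\,h\leq|I_\mu f|_{r'}|h|_r\leq C\,|f|_s|h|_r$. One may assume $f\geq0$.

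Next I would prove $|I_\mu f|_{r'}\leq C|f|_s$ via Hedberg's trick. Splitting the defining integral over $\{|x-y|\leq R\}$ and its complement: a dyadic decomposition of the near region bounds it by $C\,R^{N-\mu}(Mf)(x)$, where $M$ is the Hardy--Littlewood maximal operator (this uses $N-\mu>0$), while H\"older's inequality on the far region bounds it by $C\,R^{N/s'-\mu}|f|_s$ (the integral of $|z|^{-\mu s'}$ over $|z|>R$ converges precisely because $\mu s'>N$, which under the constraint is equivalent to $r>1$; the same condition gives $N/s'-\mu<0$). Optimizing the resulting sum over $R>0$ yields the pointwise bound $I_\mu f(x)\leq C\big((Mf)(x)\big)^{\theta}|f|_s^{1-\theta}$ with $\theta\in(0,1)$. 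Raising to the power $r'$ and integrating, one checks from $1/s+\mu/N+1/r=2$ that $\theta r'=s$, so the Hardy--Littlewood maximal theorem (valid since $s>1$) gives $\int_{\R^N}(Mf)^s\leq C|f|_s^s$, whence $|I_\mu f|_{r'}\leq C|f|_s$. Equivalently, one can observe $|x|^{-\mu}\in L^{N/\mu,\infty}(\R^N)$ and apply the weak Young inequality, itself proved by Marcinkiewicz interpolation.

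Finally, the statement claims a \emph{sharp} constant, which the soft argument above does not provide; this is where the real difficulty lies. For that I would follow Lieb: use the Riesz rearrangement inequality to reduce to radial nonincreasing $f,h$, then exploit the conformal invariance of the bilinear form under stereographic projection onto $S^N$ to identify the extremals and compute $C(s,N,\mu,r)$. For the applications in this paper only the inequality with \emph{some} finite constant is needed, so Steps 1--3 suffice; the one point of care there is to verify that each exponent inequality used ($N-\mu>0$, $\mu s'>N$, $N/s'-\mu<0$, $\theta r'=s$) follows from the hypotheses $s,r>1$ and $1/s+\mu/N+1/r=2$.
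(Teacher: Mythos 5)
The paper offers no proof of this proposition: it is the classical Hardy--Littlewood--Sobolev inequality, simply quoted from Lieb and Loss \cite{LL}, and you correctly flag this at the outset. Your sketch is a valid proof, and the exponent bookkeeping you list to justify it all checks out: $r'\in(1,\infty)$ follows from $s>1$, $0<\mu<N$, $r>1$; $N-\mu>0$ is part of the hypotheses; $\mu s'>N$ is equivalent to $r>1$ under the constraint, and is the same condition as $N/s'-\mu<0$; optimizing $R$ in the Hedberg split gives $\theta=1-\tfrac{s(N-\mu)}{N}$, and a short computation with $1/r'=1/s+\mu/N-1$ confirms $\theta r'=s$ and $\theta\in(0,1)$.

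It is worth noting, though, that your route is not quite the one in the cited reference. Lieb and Loss prove the inequality directly via the Riesz rearrangement inequality: one reduces to radially nonincreasing $f,h$, writes everything by the layer-cake formula, and in the conformally invariant diagonal case exploits the symmetry under stereographic projection to $S^N$ to compute the sharp constant and identify extremals; the non-sharp general-exponent bound then falls out of the rearranged form. Your proposal instead dualizes to an $L^s\to L^{r'}$ bound for the Riesz potential, establishes it by Hedberg's pointwise estimate $I_\mu f\leq C(Mf)^\theta|f|_s^{1-\theta}$ together with the Hardy--Littlewood maximal theorem (or, as you note, by weak Young via $|x|^{-\mu}\in L^{N/\mu,\infty}$ and Marcinkiewicz interpolation), and only then appeals to Lieb's rearrangement and conformal-invariance argument for the sharp constant. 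Both are standard and correct; the rearrangement route is more economical if one wants the sharp constant from the start, whereas the maximal-function route is softer and more portable (to weighted, local, or variable-exponent settings). For the purposes of this paper only the qualitative inequality with some finite constant is ever used, so, as you say, the Hedberg argument alone would suffice.
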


 By the Hardy-Littlewood-Sobolev inequality,
$$
\int_{\R^2}\Big[\frac{1}{|x|^{\mu}}\ast F(u)\Big]F(u)
$$
is well defined if $F(u)\in L^s(\R^N)$ for $s>1$ given by
$$
\frac2s+\frac{\mu}{N}=2.
$$
This means we must require $$F(u)\in L^{\frac{2N}{2N-\mu}}(\R^N).$$
In order to preserve the variational structure of the problem in $\R^N$, $N\geq 3$ for the prototype model $F(u)=|u|^p$, we must require by means of Sobolev's embedding that the exponent $p$ satisfies
$$
\frac{2N-\mu}{N}\leq p\leq \frac{2N-\mu}{N-2}.
$$
The confining exponents above play the role of critical exponents for the nonlocal Choquard equation in $\R^N$, $N\geq 3$. Most of the works afore mentioned are set in $\R^N$, $N\geq 3$, with non-critical growth nonlinearities and to the authors best knowledge no results are available on the existence and concentration of solutions for the nonlocal Choquard equation with upper-critical exponent $\frac{2N-\mu}{N-2}$ but only in the case of the lower-critical exponent $\frac{2N-\mu}{N}$, see \cite{MS4}.

 The case $N=2$ is very special, as for bounded domains $\Omega\subset\R^2$ the corresponding Sobolev embedding yields $H^1_0(\Omega)\subset L^{q}(\Omega)$ for all $q\geq1$, but $H^1_0(\Omega)\nsubseteqq L^{\infty}(\Omega)$. In dimension $N=2$, the Pohozaev-Trudinger-Moser inequality \cite{P,M} can be seen as a substitute of the Sobolev inequality as it establishes the following sharp maximal exponential integrability for functions with membership in $H^1_0(\Omega)$:
\[
\sup_{u\in H_0^1(\Omega) \; : \; \|\nabla u\|_2\leq
1}\int_{\Omega} e^{\alpha
 u^2}\leq C|\Omega| \quad\text{if }\alpha \leq 4\pi,
\]
for a positive constant which depends only on $\alpha$ and where $|\Omega|$ denotes Lebesgue measure of $\Omega$. As a consequence we say that a function $f(s)$ has
\emph{critical exponential growth} if there exists $\alpha_0>0$ such that
\begin{equation}\label{ecg}
\lim_{|s|\to +\infty}\frac{|f(s)|}{e^{\alpha s^2}}=0,\ \ \forall \alpha>\alpha_0,\ \ \hbox{and} \ \ \lim_{|s|\to +\infty}\frac{|f(s)|}{e^{\alpha s^2}}=+\infty,\ \ \forall \alpha<\alpha_0.
\end{equation}
This notion of criticality was introduced by Adimurthi and Yadava \cite{AY}, see also de Figueiredo, Miyagaki and Ruf \cite{DMR}.
The first version of the Pohozaev-Trundiger-Moser inequality in $\R^2$ was established by Cao in \cite{Cao}, see also \cite{O, AT,CST}, and reads as follows
\begin{lem} \label{Trudinger-Moser}
If $\alpha >0$ and $u\in H^{1}(\mathbb{R}^2)$, then
\begin{equation}\label{TM1}
\int_{\mathbb{R}^2}\Big[ e^{ \alpha | u | ^2}
-1 \Big] <\infty .
\end{equation}
Moreover, if $|\nabla u| _2^2\leq 1$,
 $ | u |_2\leq M<\infty $, and $\alpha <\alpha
_0=4\pi$, then there exists a constant
$C$, which depends only on $M$ and $\alpha $,
such that
\begin{equation}\label{TM2}
\int_{\mathbb{R}^2}\big[ e^{\alpha | u| ^{2}}
-1 \big] \leq C(M,\alpha ).
\end{equation}
\end{lem}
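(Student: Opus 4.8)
The plan is to reduce both assertions to the classical Pohozaev-Trudinger-Moser inequality on bounded domains quoted above, by a truncation/decomposition argument combined with an elementary Taylor-series estimate. First I would prove \eqref{TM1}. Fix $u\in H^1(\R^2)$ and $\alpha>0$. Expand $e^{\alpha|u|^2}-1=\sum_{k\ge1}\frac{\alpha^k}{k!}|u|^{2k}$. The top obstacle is that $|\nabla u|_2$ need not be $\le1$, so one cannot feed $u$ directly into Moser's inequality; the point is that $H^1(\R^2)\hookrightarrow L^q(\R^2)$ for every $q\in[2,\infty)$, so every term $|u|^{2k}$ is integrable, and one needs quantitative control of $\||u|^{2k}\|_{L^1}$ to sum the series. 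I would split $\R^2=\{|u|\le1\}\cup\{|u|>1\}$. On $\{|u|\le1\}$ one has $e^{\alpha|u|^2}-1\le (e^\alpha-1)|u|^2$, which is integrable since $u\in L^2$. On $\Omega_1:=\{|u|>1\}$ (a set of finite measure, by Chebyshev, since $u\in L^2$) I would use the Gagliardo-Nirenberg inequality $|u|_q^q\le C_q |\nabla u|_2^{q-2}|u|_2^2$ valid in $\R^2$ for $q\ge2$, which after plugging into the series and using $k!\ge (k/e)^k$ gives a convergent majorant provided one localizes: more precisely, writing $v=u/|\nabla u|_2$ on a large ball, $|\nabla v|_2\le1$, and one applies the bounded-domain inequality on $B_R$ together with a tail estimate. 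The cleanest route is: for any $R>0$, $\int_{B_R}(e^{\alpha u^2}-1)\le \int_{B_R}(e^{\alpha|\nabla u|_2^2 v^2}-1)$ and then split $v=v_1+v_2$ with $\|\nabla v_1\|_2$ as small as we like (cutting off a large-modulus part), apply Moser to $v_1$ and absorb $v_2\in L^\infty_{loc}$-type terms; letting $R\to\infty$ and controlling the exterior by the $L^2$ part yields \eqref{TM1}.

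For \eqref{TM2} the additional hypotheses $|\nabla u|_2^2\le1$, $|u|_2\le M$, $\alpha<4\pi$ allow a uniform bound. Again decompose $\R^2=\{|u|\le1\}\cup\Omega_1$. On $\{|u|\le1\}$: $\int_{\{|u|\le1\}}(e^{\alpha u^2}-1)\le (e^\alpha-1)|u|_2^2\le (e^{4\pi}-1)M^2$. On $\Omega_1$: here $|\Omega_1|\le |u|_2^2\le M^2$ by Chebyshev, and on $\Omega_1$ we have $|u|^2\le |u|^2$ trivially but now use $|u|\le |u|^2$ there; more usefully, restrict $u$ to $\Omega_1$ and note $\|\nabla u\|_{L^2(\Omega_1)}\le 1$. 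Choose $\alpha'$ with $\alpha<\alpha'<4\pi$. The classical inequality on the finite-measure set $\Omega_1$ (extended to a bounded smooth domain containing it, or applied directly via the version for arbitrary finite-measure domains) gives $\int_{\Omega_1}e^{\alpha' u^2}\le C|\Omega_1|\le C M^2$ with $C=C(\alpha')$. Since $e^{\alpha u^2}-1\le e^{\alpha' u^2}$, this bounds the $\Omega_1$-integral by $CM^2$. Adding the two pieces gives $\int_{\R^2}(e^{\alpha u^2}-1)\le (e^{4\pi}-1)M^2+C(\alpha')M^2=:C(M,\alpha)$, which is \eqref{TM2}.

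The step I expect to be the genuine obstacle is making the decomposition in \eqref{TM1} rigorous when $|\nabla u|_2$ is large: a single rescaling $u\mapsto u/|\nabla u|_2$ destroys the $L^2$ normalization and the exponent $\alpha|\nabla u|_2^2$ may exceed $4\pi$, so one truly needs to split off the part of $u$ where it is large (which carries little Dirichlet energy once we are on a set of small measure) from a bounded part. Handling the interaction cross term $2\langle\nabla v_1,\nabla v_2\rangle$ and the inequality $e^{a+b}\le \tfrac12 e^{2a}+\tfrac12 e^{2b}$ to separate the two contributions, then summing a geometric-type series in the number of truncation levels, is the technical heart; everything else is Chebyshev, Taylor expansion, and a direct citation of the bounded-domain Pohozaev-Trudinger-Moser inequality. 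For \eqref{TM2} no such difficulty arises because the hypothesis $|\nabla u|_2\le1$ is exactly what the classical inequality needs, so that part is essentially a two-line Chebyshev-plus-citation argument.
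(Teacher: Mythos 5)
The paper does not actually prove this lemma: it is quoted verbatim from Cao's 1992 paper (with pointers to do \'O, Adachi--Tanaka, and Cassani--Sani--Tarsi), so there is no in-house proof to compare yours against. I will therefore assess your sketch on its own terms.

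Your argument for \eqref{TM2} has a concrete gap. You restrict $u$ to the superlevel set $\Omega_1=\{|u|>1\}$, note $\|\nabla u\|_{L^2(\Omega_1)}\le 1$ and $|\Omega_1|\le M^2$, and then invoke the bounded-domain Pohozaev--Trudinger--Moser inequality directly for $u$ on $\Omega_1$. But that inequality is a statement about functions in $H^1_0(\Omega_1)$ with $\|\nabla(\cdot)\|_2\le 1$, and $u|_{\Omega_1}$ does \emph{not} vanish on $\partial\Omega_1$ (there $|u|=1$), so it is not in $H^1_0(\Omega_1)$ and the inequality cannot be applied to it. The standard fix is the one used by Cao and by Adachi--Tanaka: set $w=(|u|-1)^+\in H^1_0(\Omega_1)$, which still satisfies $\|\nabla w\|_2\le\|\nabla u\|_2\le 1$, and on $\Omega_1$ control $u^2=(w+1)^2\le(1+\delta)w^2+\bigl(1+\tfrac1\delta\bigr)$ by Young's inequality. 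Choosing $\delta>0$ small enough that $\alpha(1+\delta)<4\pi$ then gives
$\int_{\Omega_1}e^{\alpha u^2}\le e^{\alpha(1+1/\delta)}\int_{\Omega_1}e^{\alpha(1+\delta)w^2}\le C(\alpha,\delta)\,|\Omega_1|\le C(\alpha)M^2$,
and combined with your (correct) estimate $\int_{\{|u|\le1\}}(e^{\alpha u^2}-1)\le(e^\alpha-1)M^2$ this closes \eqref{TM2}. Without passing to $(|u|-1)^+$, the step ``$\int_{\Omega_1}e^{\alpha' u^2}\le C|\Omega_1|$'' is unjustified.

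For \eqref{TM1} your sketch has the right shape --- split $u$ into a large-modulus part carrying arbitrarily small Dirichlet energy plus a bounded remainder, apply the bounded-domain Moser inequality to the former on its finite-measure support, and absorb the remainder via an $e^{a+b}$-type interpolation --- but, as you yourself concede, the ``technical heart'' (the truncation at level $A$, the smallness of $\|\nabla(|u|-A)^+\|_2$ for $A$ large, and the absorption of cross terms) is left as a program rather than carried out. The Gagliardo--Nirenberg/Taylor-series route you mention in passing is in fact a complete alternative proof of \eqref{TM1} that avoids truncation altogether, but you do not develop it either. As it stands, \eqref{TM1} is plausible in outline but not established, and \eqref{TM2} is wrong as written without the $(|u|-1)^+$ correction.
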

\noindent
We refer the reader to \cite{AY,YR} for related problems and \cite{CST, LL1, YY} for recent advances on this topic.  Actually just a few papers deal with semiclassical states for local Schr\"{o}dinger equations with critical exponential growth. In \cite{OSo}, do \'O and Souto proved the existence of solutions concentrating around local minima of of $V(x)$ which are not necessarily nondegenerate. For $N$-Laplacian equation in $\R^N$, Alves and Figueiredo \cite{AF} studied the multiplicity of semiclassical solutions with Rabinowitz type assumption on the potential. Recently, do \'O and Severo \cite{OSe} and do \'O, Moameni and Severo \cite{OMU}  also studied a class of quasilinear Schr\"{o}dinger equations in $\R^2$ with critical exponential growth.

Hence it is quite natural to wonder if the existence and concentration results for local Schr\"{o}dinger equations still hold for the nonlocal equation with critical growth in the sense of Pohozaev-Trudinger-Moser. The purpose of this paper is two-fold: on the one hand we study the existence of nontrivial solution for the critical nonlocal equation with periodic potential, namely we consider the equation
\begin{equation}\label{A1}
-\Delta u + W(x)u  =\Big(\frac{1}{|x|^{\mu}}\ast F(u)\Big)f(u), \,\,\ \mbox{in} \,\,\, \mathbb{R}^{2}.
\end{equation}
and assume  for the potential the following
\begin{itemize}
\item[$(W_1)$] $W(x)\geq W_0>0$ in $\R^2$ for some $W_0>0$;
\item[$(W_2)$] $W(x)$ is a  1-periodic continuous function.
\end{itemize}
and for the nonlinearity $f$ which satisfies the following
\begin{itemize}
\item[$(f_1)$](i)$f(s)=0 \quad \forall s \leq 0$, $\displaystyle 0\leq f(s) \leq C e^{4\pi s^2},
\quad s\geq 0$;
\newline (ii) $\exists\, s_0>0,  M_0>0, \ \hbox{and} \ q\in (0,1]$ such that
 $0<s^q F(s)\leq M_0f(s),\:\forall\, |s|\geq s_0$.
\item[$(f_2)$] There exists $\displaystyle p>
\frac{2-\mu}{2} $ and $ C_p>0$ such that $f(s)\sim C_p s^p$, as $s\to
0$.


\item[$(f_3)$]  There exists $K>1$ such that $f(s)s>KF(s)$ for
all $s>0$, where $F(t)=\int^t_0f(s)ds$.

\item[$(f_4)$] $\displaystyle
    \lim_{s\to +\infty} \frac{sf(s)F(s)}{e^{8\pi s^2}}\geq\beta,  $
 with  $\beta>\displaystyle \inf_{\rho>0}
\frac{e^{\frac{4-\mu}4V_0\rho^2}}{16 \pi^2
\rho^{4-\mu}}\frac{(4-\mu)^2}{(2-\mu)(3-\mu) }$.

\end{itemize}
Our first main result reads as follows
\begin{thm}\label{thm-Existence}
  Assume $0<\mu<2$, suppose that the potential $V$ satisfies
$(W_1)-(W_2)$ and the nonlinearity $f$ satisfies conditions
$(f_1)-(f_4)$. Then equation \eqref{A1} has a ground state solution in
$H^1(\mathbb R^2)$.
\end{thm}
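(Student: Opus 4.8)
The plan is to obtain the ground state as a mountain-pass critical point of
$$
I(u)=\frac12\|u\|^2-\frac12\int_{\R^2}\Big(\frac1{|x|^\mu}\ast F(u)\Big)F(u),\qquad
\|u\|^2:=\int_{\R^2}\big(|\nabla u|^2+W(x)u^2\big),
$$
which, by $(W_1)$–$(W_2)$, is well defined on $H^1(\R^2)$ with $\|\cdot\|$ equivalent to the standard norm. First I would check $I\in C^1(H^1(\R^2),\R)$: from $(f_1)(\mathrm{i})$ and $(f_2)$ one gets, for every $\al>4\pi$ and $\vr>0$, an estimate of the form $|F(s)|\le\vr|s|^{p+1}+C_\vr\,|s|\,(e^{\al s^2}-1)$; then the Hardy-Littlewood-Sobolev inequality (Proposition \ref{HLS}) with exponents $\tfrac{4}{4-\mu}>1$ together with the Trudinger-Moser inequality of Lemma \ref{Trudinger-Moser} (and H\"older) shows that the nonlocal term is finite and that $I$ and $I'$ are continuous.

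Next I would verify the mountain-pass geometry. Since $p>\tfrac{2-\mu}{2}$ forces $2(p+1)>4-\mu>2$, the bounds above give $\int_{\R^2}(\tfrac1{|x|^\mu}\ast F(u))F(u)=o(\|u\|^2)$ as $\|u\|\to0$, hence $I\ge\tfrac14\|u\|^2$ on a small sphere $\|u\|=\rho$. For the second condition, fix $0\le u_0\in C_c^\infty(\R^2)$, $u_0\not\equiv0$: by $(f_3)$ one has $F(ts)\ge t^K F(s)$ for $t\ge1$, so $\int(\tfrac1{|x|^\mu}\ast F(tu_0))F(tu_0)\ge t^{2K}\int(\tfrac1{|x|^\mu}\ast F(u_0))F(u_0)$ with $2K>2$, whence $I(tu_0)\to-\infty$ and the mountain-pass level $c>0$ is defined. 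The key quantitative step is then to prove $c<c^{*}$, where $c^{*}$ is the threshold below which the second Trudinger-Moser estimate \eqref{TM2} applies along bounded sequences. I would test $I$ on Moser's concentrating logarithmic functions $w_n$ supported in $B_1$, normalized so $\|\nabla w_n\|_2=1$ and $\|w_n\|_2\to0$, and use $(f_4)$ — which yields $sf(s)F(s)\ge(\beta-o(1))e^{8\pi s^2}$ at infinity with $\beta$ strictly above the displayed infimum — to obtain $\max_{t\ge0}I(tw_n)<c^{*}$ for $n$ large; the precise value of that infimum is exactly what is needed to control the doubled term $\int(\tfrac1{|x|^\mu}\ast F)\,F$ evaluated on the logarithmic profile.

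For compactness, the mountain-pass theorem furnishes a Cerami (or Palais-Smale) sequence $(u_n)$ at level $c$; $(f_3)$ gives $c+o(\|u_n\|)\ge I(u_n)-\tfrac1{2K}\langle I'(u_n),u_n\rangle\ge(\tfrac12-\tfrac1{2K})\|u_n\|^2$, so $(u_n)$ is bounded and $u_n\rightharpoonup u$ along a subsequence. Since $W$ is only $1$-periodic, there is no compact Sobolev embedding: if $\sup_{y\in\R^2}\int_{B_1(y)}u_n^2\to0$ then Lions' lemma gives $u_n\to0$ in $L^q$ for all $q>2$, which together with the level estimate contradicts $c>0$; hence there are $y_n\in\Z^2$ with $\int_{B_1(y_n)}u_n^2\ge\delta>0$, and replacing $u_n$ by $u_n(\cdot+y_n)$, still a Cerami sequence at level $c$ by the periodicity $(W_2)$, we may assume $u\ne0$. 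The heart of the proof, and the step I expect to be the main obstacle, is then to pass to the limit in the nonlocal nonlinearity and deduce $I'(u)=0$: one must use $c<c^{*}$ to show that the Dirichlet energy of $u_n$ does not concentrate too strongly, i.e. that $\limsup_n\|\nabla u_n\|_2^2$ stays below the value keeping the Trudinger-Moser exponent subcritical after the natural rescaling; then \eqref{TM2} provides a uniform $L^1$-bound on $e^{\al u_n^2}-1$ for some $\al>4\pi$, giving equi-integrability of $f(u_n)$ and $F(u_n)$ on bounded sets and, through Proposition \ref{HLS} and H\"older, the convergence $(\tfrac1{|x|^\mu}\ast F(u_n))f(u_n)\to(\tfrac1{|x|^\mu}\ast F(u))f(u)$ in the sense of distributions. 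This yields a nontrivial weak solution $u$.

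Finally, to identify a ground state I would work on the Nehari manifold $\cn=\{v\in H^1(\R^2)\setminus\{0\}:\langle I'(v),v\rangle=0\}$: using $(f_3)$ one shows that every $v\ne0$ admits a unique $t_v>0$ with $t_v v\in\cn$, that $c=\inf_{\cn}I>0$, and then, running the compactness analysis above on a minimizing Cerami sequence for $I|_{\cn}$, one obtains $w\in\cn$ with $I(w)=\inf_{\cn}I$; such a $w$ solves \eqref{A1} and is a ground state. A secondary technical difficulty, besides the compactness step, is the sharp asymptotic computation behind $c<c^{*}$, which because of the convolution requires carefully tracking the interaction of the logarithmic Moser profile with the Riesz kernel $|x|^{-\mu}$.
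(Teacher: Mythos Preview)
Your overall architecture --- mountain-pass geometry, Moser-sequence test to push the level below the threshold $\tfrac{4-\mu}{8}$, Lions' dichotomy, and $\Z^2$--translation by periodicity --- matches the paper. But the compactness step, which you correctly flag as the heart of the matter, is not set up the way you describe, and your plan as written has a genuine gap.

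You propose to use $c<\tfrac{4-\mu}{8}$ to force $\limsup_n\|u_n\|^2$ (or $\|\nabla u_n\|_2^2$) below the Trudinger--Moser threshold, and then extract equi-integrability of $e^{\al u_n^2}-1$. This does not follow: from $I(u_n)\to c$ one only gets
\[
\|u_n\|^2 \;=\; 2c \;+\; \int_{\R^2}\Bigl(\tfrac{1}{|x|^\mu}\ast F(u_n)\Bigr)F(u_n)\;+\;o(1),
\]
and the convolution term is nonnegative and a priori of order one. So the level bound gives a \emph{lower} bound on $\|u_n\|^2$, not the upper bound you need. In the paper the level estimate is used \emph{only} to exclude the vanishing alternative: once Lions gives $u_n\to 0$ in every $L^q$, $q>2$, a separate argument shows $\int(\tfrac{1}{|x|^\mu}\ast F(u_n))F(u_n)\to 0$, whence $\|u_n\|^2\to 2c<\tfrac{4-\mu}{4}$, and \emph{then} Trudinger--Moser forces $\int(\tfrac{1}{|x|^\mu}\ast F(u_n))f(u_n)u_n\to 0$, contradicting $c>0$.

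For the passage to the limit in $I'(u_n)\to 0$ (i.e.\ that the weak limit $u$ solves the equation), the paper does \emph{not} rely on any smallness of $\|u_n\|$. The mechanism is the structural hypothesis $(f_1)(ii)$: $s^qF(s)\le M_0 f(s)$ for $s\ge s_0$. From the $(PS)$ information one has the uniform bound $\int(\tfrac{1}{|x|^\mu}\ast F(u_n))\,u_nf(u_n)\le C$, and $(f_1)(ii)$ lets one dominate the contribution of $(\tfrac{1}{|x|^\mu}\ast F(u_n))F(u_n)$ on the sets $\{u_n\ge M\}$ by $M^{-q}$ times this bounded quantity. This is the de Figueiredo--Miyagaki--Ruf truncation argument adapted to the convolution, and it yields $(\tfrac{1}{|x|^\mu}\ast F(u_n))F(u_n)\to(\tfrac{1}{|x|^\mu}\ast F(u))F(u)$ in $L^1_{\mathrm{loc}}$. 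To identify the limit of $(\tfrac{1}{|x|^\mu}\ast F(u_n))f(u_n)$ the paper uses the test function $\varphi/(1+u_n)$ to get a local $L^1$ bound on $(\tfrac{1}{|x|^\mu}\ast F(u_n))f(u_n)$, and then a weak$^*$/Radon--Nikodym argument. Your sketch does not invoke $(f_1)(ii)$ at all, and without it (or an equivalent device) the equi-integrability you assert is not available.

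A smaller point: in your final paragraph you claim that $(f_3)$ alone gives a \emph{unique} $t_v>0$ with $t_vv\in\cn$. It does not; uniqueness of the Nehari projection needs a monotonicity hypothesis such as $(f_5)$, which is not assumed in Theorem~\ref{thm-Existence}. The paper does not use the Nehari manifold here; it simply shows that the translated $(PS)_{m_W}$ sequence has a nontrivial weak limit which is a critical point at level $m_W$, and declares this a ground state.
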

On the other hand, we establish existence and concentration of semiclassical ground state solutions of the following equation
\begin{equation}\label{EC}
\aligned &-\varepsilon^2\Delta u +V(x)u =\vr^{\mu-2}\Big[\frac{1}{|x|^{\mu}}\ast  F(u)\Big]f(u) \,\,\, \mbox{in} \,\,\, \R^2.
\endaligned
\end{equation}
Here we assume the following conditions on $V$:
\begin{itemize}
\item[$(V_1)$] $V(x)\geq V_0>0$ in $\R^2$ for some $V_0>0$;

\item[$(V_2)$]
$
0<\inf_{x\in \R^2}V(x)=V_0<V_{\infty}=\liminf_{|x|\to \infty}V(x)<\infty.
$
\end{itemize}
The condition $(V_2)$ was introduced by Rabinowitz in \cite{R}. Hereafter, we will denote by
$$
{M}=\{x\in\R^2:V(x)=V_0\},
$$
the minimum points set of $V(x)$.

\noindent We also assume that  that the nonlinearity enjoys the following
\begin{itemize}
\item[$(f_5)$] $\displaystyle s\to f(s)$ \quad is strictly increasing on  $(0, +\infty)$.
\end{itemize}
Then we prove our second main result
\begin{thm}\label{T1}
Suppose that the nonlinearity $f(s)$ satisfies $(f_1)-(f_5)$ and the potential function $V(x)$ satisfies assumptions $(V_1)-(V_2)$. Then, for any $\vr>0$ small, problem $(\ref{EC})$ has at least one positive ground state solution. Moreover, let $u_\vr$ denotes one of these positive solutions with $\eta_\vr\in\R^2$ its global maximum, then
$$
\lim_{\varepsilon\to 0}V(\eta_\varepsilon)=V_0.
$$
\end{thm}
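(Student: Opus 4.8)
The proof follows the standard strategy for singularly perturbed problems under the Rabinowitz condition $(V_2)$, with the extra difficulties coming from the Riesz-type nonlinearity and the critical exponential growth. I would begin with the rescaling $v(x)=u(\vr x)$, which (thanks precisely to the factor $\vr^{\mu-2}$) turns \eqref{EC} into
$$
-\Delta v + V(\vr x)\,v = \Big[\frac{1}{|x|^{\mu}}\ast F(v)\Big]f(v)\qquad\text{in }\R^2 ,
$$
with energy functional $J_\vr(v)=\frac12\int_{\R^2}\big(|\nabla v|^2+V(\vr x)v^2\big)-\frac12\int_{\R^2}\big[\frac{1}{|x|^{\mu}}\ast F(v)\big]F(v)$ on $H^1(\R^2)$. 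Using $(f_1)$--$(f_3)$ together with Lemma~\ref{Trudinger-Moser} and Proposition~\ref{HLS}, one checks that $J_\vr$ is well defined, $C^1$, and has the mountain pass geometry; the monotonicity $(f_5)$ guarantees that every ray $t\mapsto tv$ meets the Nehari manifold $\cn_\vr$ exactly once and that the mountain pass level $c_\vr$ equals $\inf_{\cn_\vr}J_\vr=\inf_{v\ne0}\max_{t>0}J_\vr(tv)$. I would also introduce the autonomous limit functionals $\cl_a$ obtained by replacing $V(\vr x)$ with a constant $a\in\{V_0,V_\infty\}$ and their ground state levels $c_a$, noting the elementary bound $c_\vr\ge c_{V_0}$ that comes from $J_\vr(tv)\ge\cl_{V_0}(tv)$.

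The analytic heart of the argument is a \emph{compactness threshold}: combining Lemma~\ref{Trudinger-Moser} with Proposition~\ref{HLS} (and a Brezis--Lieb type splitting for the nonlocal Choquard energy, together with the equi-integrability of $f(v_n)$ coming from $(f_1)$) one produces a constant $d_\mu>0$ such that any Palais--Smale sequence for $J_\vr$, or for $\cl_a$, at a level $c<d_\mu$ is relatively compact up to translations. I would then establish the two comparisons that localize the problem: first $\limsup_{\vr\to0}c_\vr\le c_{V_0}$, by testing $J_\vr$ against a truncated translate of a ground state of $\cl_{V_0}$ concentrated at a point of $M=\{V=V_0\}$; and second $c_{V_0}<d_\mu$, which is exactly what hypothesis $(f_4)$ is designed to produce, and which I expect to be the main obstacle. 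The latter is proved by inserting a Moser concentrating family $w_\rho$ (truncated logarithmic peaks with $\|\nabla w_\rho\|_2=1$ and support in $B_\rho$) into $\max_{t>0}\cl_{V_0}(tw_\rho)$: the leading behaviour of the nonlocal term is that of $\int\!\!\int\frac{F(tw_\rho)F(tw_\rho)}{|x-y|^{\mu}}$, whose growth is governed by the exponent $8\pi$ and the volume factor $\rho^{4-\mu}$, and the quantitative lower bound $sf(s)F(s)\gtrsim\beta e^{8\pi s^2}$ with $\beta$ exceeding the displayed infimum over $\rho$ is precisely the condition forcing $\max_t\cl_{V_0}(tw_\rho)<d_\mu$ for a suitable $\rho$. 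The delicate point is to run this Moser computation while simultaneously tracking the convolution and the lower-order corrections, and then optimizing in $\rho$. Combining this with $(f_5)$ and $V_0<V_\infty$ also yields the strict inequality $c_{V_0}<c_{V_\infty}$.

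With the threshold and the comparisons available, existence follows in the usual way: a $(PS)_{c_\vr}$ sequence for $J_\vr$ is bounded by $(f_3)$, and since $c_{V_0}\le c_\vr<d_\mu$ for $\vr$ small it is relatively compact up to translations; vanishing is excluded because $c_\vr>0$, and mass escaping to infinity is excluded because there the effective potential is $\ge V_\infty$, so an escaping bubble would carry energy $\ge c_{V_\infty}>c_\vr$. Hence $J_\vr$ attains $c_\vr$ at some $v_\vr$, which is positive by the strong maximum principle since $f\equiv0$ on $(-\infty,0]$ by $(f_1)$; undoing the scaling gives a positive ground state $u_\vr$ of \eqref{EC}.

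Finally, for the concentration, let $y_\vr$ be a global maximum point of $v_\vr$, so that $\eta_\vr=\vr y_\vr$ is a global maximum of $u_\vr$. As above, $c_\vr<c_{V_\infty}$ prevents $\vr y_\vr$ from running off to infinity, so (up to a subsequence) $\vr y_\vr\to x_0\in\R^2$; standard uniform decay estimates then show that $\bar v_\vr:=v_\vr(\cdot+y_\vr)\to w$ strongly in $H^1(\R^2)$ with $w\ne0$ a critical point of $\cl_{V(x_0)}$. Passing to the limit in the energies and using $\lim_\vr c_\vr=c_{V_0}$ gives $c_{V(x_0)}\le\cl_{V(x_0)}(w)\le c_{V_0}$, and since $V(x_0)\ge V_0$ while $a\mapsto c_a$ is strictly increasing, this forces $V(x_0)=V_0$, i.e. $x_0\in M$. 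As $\eta_\vr=\vr y_\vr\to x_0$ and $V$ is continuous, $\lim_{\vr\to0}V(\eta_\vr)=V(x_0)=V_0$, which is the assertion.
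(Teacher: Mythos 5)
Your overall roadmap -- rescaling, mountain pass / Nehari characterization, the comparison $\limsup_{\vr\to 0}c_\vr\le m_{V_0}$ via truncated ground states of the autonomous problem, the threshold estimate via Moser functions, and the final concentration argument using $m_{V_0}<m_{V_\infty}$ -- is indeed the same as the paper's. But the way you describe the \emph{compactness mechanism} is not what the paper does, and as stated it is the weak point of the sketch. You propose an abstract threshold $d_\mu$ obtained from ``a Brezis--Lieb type splitting for the nonlocal Choquard energy, together with the equi-integrability of $f(v_n)$.'' In the critical Trudinger--Moser regime equi-integrability of $f(v_n)$ is not available from $(f_1)$ alone: it is exactly what fails when the $H^1$-norm of the sequence crosses the Moser threshold. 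What actually makes the argument work in the paper is quantitative and specific: a $(PS)_{c_\vr}$ sequence with $c_\vr<\frac{4-\mu}{8}$ satisfies $\limsup_n\|u_n\|_\vr^2=2c_\vr<\frac{4-\mu}{4}$, and it is this explicit norm bound (via Lemma~\ref{Trudinger-Moser} and the Adachi--Tanaka-type refinement) that keeps the exponential terms uniformly integrable, rules out vanishing, and lets one pass to the limit in the nonlocal terms. Moreover, the exclusion of an escaping bubble is not a one-line energy comparison; the paper projects $t_nu_n$ onto the Nehari manifold $\cn_{V_\infty}$ and runs a three-way case analysis on $t_0=\lim t_n$ ($t_0>1$, $t_0=1$, $t_0<1$), using $(f_5)$ and $(V_2)$ in each branch to derive a contradiction with $c_\vr<m_{V_\infty}$. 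Your phrase ``an escaping bubble would carry energy $\ge c_{V_\infty}>c_\vr$'' is the right intuition, but without this Nehari-projection comparison the inequality does not follow.

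A second, smaller point: in the concentration step you translate by the maximum points $y_\vr$ directly and assert strong convergence plus a nontrivial limit. The paper instead first translates by concentration points $\tilde y_n$ coming from the non-vanishing alternative, proves uniform $L^\infty$ bounds and uniform decay at infinity for $v_n=u_n(\cdot+\tilde y_n)$ (for which Lemma~\ref{BNT1}, showing $\frac{1}{|x|^\mu}\ast F(h)\in L^\infty$, is essential), establishes a uniform lower bound $|v_n|_\infty\ge\delta_0$, and only then concludes that the maxima $b_n$ of $v_n$ stay in a fixed ball, so $\eta_{\vr_n}=\vr_n(b_n+\tilde y_n)\to y\in M$. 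If you want to translate by the maxima directly you need to first justify that they cannot escape to infinity, which is exactly what those decay estimates provide; as written your sketch assumes the conclusion of that lemma. None of this changes the overall architecture, which you have right, but the compactness step needs the explicit threshold-and-Nehari argument, not a generic Brezis--Lieb splitting.
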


\noindent \underline{Notation}: \\
\noindent $\bullet$ $C$, $C_i$ denote positive constants.\\
\noindent $\bullet$ $B_R$ denote the open ball centered at the origin with
radius $R>0$. \\
\noindent $\bullet$ $C_0^{\infty}(\R^2)$ denotes  the space of the functions
infinitely differentiable with compact support in $\R^2$. \\
\noindent $\bullet$ For a mensurable function $u$, we denote by $u^{+}$ and $u^{-}$ its positive and negative parts respectively, given by
$$
u^{+}(x)=\max\{u(x),0\} \quad \mbox{and} \quad u^{-}(x)=\min\{u(x),0\}.
$$
\noindent $\bullet$ In what follows, we denote by $\|\,\,\,\,\|$ and $|\,\,\,\,|_s$ the usual norms of the spaces $H^{1}(\mathbb{R}^{2})$ and $L^{s}(\mathbb{R}^{2})$ respectively. \\
\noindent $\bullet$ \, Let $E$ be a real Hilbert space and $I:E \to \R$ a functional of class $\mathcal{C}^1$.
 We say that $\{u_n\}\subset E$ is a  Palais-Smale ($(PS)$ for short) sequence at $c$ for $I$ if $\{u_n\}$ satisfies
$$
I(u_n)\to c \,\,\, \mbox{and} \,\,\,\, I'(u_n)\to0,  \,\,\, \mbox{as} \,\,\, n\to\infty.
$$
Moreover, $I$ satisfies the $(PS)$ condition at level $c$, if any $(PS)$ sequence $\{u_n\}$ such that $I(u_n)\to c$ possesses a convergent subsequence.

\section{A critical nonlocal equation with periodic potential: proof of Theorem \ref{thm-Existence}}

In \cite{AY2}, Alves and Yang  studied equation \eqref{A1} under hypothesis $(W1)$ and $(W_2)$ for the potential and the following conditions on the nonlinearity $f : \R^+\to \R$ of class $\mathcal{C}^1$:
$$
f(0)=0,\ \ \ \lim_{s\to 0}{f'(s)}=0. \eqno{(f'_1)}
$$
It is of critical growth at infinity with $\alpha_0=4\pi$. Moreover, there exists $C_0$ such that
$$
| f'(s)|  \leq \,\,\,  C_0e^{4 \pi s^{2}},  \,\,\,\, \forall s> 0. \eqno{(f'_2)}
$$
There exists ${\theta}>2$ such that
$$
0<\theta F(s)\leq 2f(s)s,\ \  \forall s>0, \eqno{(f'_3)}
$$
Furthermore, they suppose that there exists $p>\frac{4-\mu}{2}$, such that
$$
F(s)\geq C_ps^p,\,\,\,\, \forall s>0 \eqno{(f'_4)}
$$
where
$$
C_p>\frac{[\frac{4\theta(p-1)}{(2-\mu)(\theta-2)}]^{\frac{p-1}{2}}S^{p}_p}{p^{\frac{p}{2}}}.
$$
and
$$
\displaystyle S_p=\displaystyle\inf_{u\in H^{1}(\R^2), u\neq0}\frac{\displaystyle\left(\int_{\R^2}\big(|\nabla u|^2+|W|_\infty|u|^2\big)\right)^{1/2}}{\displaystyle\left(\int_{\R^2}\Big[ \frac{1}{|x|^{\mu}}\ast |u|^p\Big]|u|^p\right)^{\frac{1}{2p}}}.
$$

\noindent Combining the above estimates with the Hardy-Littlewood-Sobolev inequality and some results due to P.L. Lions, the following existence result was obtained in \cite{AY2}.
\begin{thm}\label{AQ1}
Suppose that conditions $(f'_1)-(f'_4)$ hold. Then problem \eqref{A1} has at least one ground state solution $w$.
\end{thm}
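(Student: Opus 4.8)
The plan is to realize $w$ as a mountain--pass critical point of the energy functional
$$
I(u)=\frac12\int_{\R^2}\big(|\nabla u|^2+W(x)u^2\big)-\frac12\int_{\R^2}\Big(\frac{1}{|x|^{\mu}}\ast F(u)\Big)F(u),\qquad u\in H^1(\R^2),
$$
where $H^1(\R^2)$ carries the norm $\|u\|^2=\int_{\R^2}(|\nabla u|^2+W(x)u^2)$, equivalent to the usual one by $(W_1)$. First I would check that $I\in C^1(H^1(\R^2),\R)$ and that its critical points solve \eqref{A1} weakly. From $(f'_1)$--$(f'_2)$ one derives, for every $\alpha>4\pi$ and every $\delta>0$, a pointwise bound $|F(s)|\le \delta\,s^2+C_{\delta,\alpha}\,|s|^{r}\big(e^{\alpha s^2}-1\big)$ for some $r>2$; plugging this into Lemma~\ref{Trudinger-Moser} (which gives $L^t$-control, $t\ge1$, of $e^{\alpha u^2}-1$ once $\|\nabla u\|_2$ is small) and into the Hardy--Littlewood--Sobolev inequality (Proposition~\ref{HLS} with $N=2$ and $s=r=\frac{4}{4-\mu}$, so that $F(u)\in L^{4/(4-\mu)}(\R^2)$) shows that the nonlocal term, hence $I$, is well defined and of class $C^1$.

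Next I would verify the mountain--pass geometry. Since $(f'_1)$ forces $F(s)=o(s^2)$ as $s\to0$, the above estimates bound the nonlocal term by $C\|u\|^{2+\sigma}$ on a small ball for some $\sigma>0$, so there are $\rho,\beta_0>0$ with $I(u)\ge\beta_0$ whenever $\|u\|=\rho$; while $(f'_3)$ yields $F(s)\gtrsim s^{\theta/2}$ with $\theta/2>1$, hence $I(tu_0)\to-\infty$ as $t\to+\infty$ for any fixed $u_0\ge0$, $u_0\ne0$, and the mountain--pass level $c>0$ is well defined. The crucial point is to prove that $c$ lies strictly below the compactness threshold $c^{*}$ (a number depending on $\mu$ and $\theta$) imposed by Lemma~\ref{Trudinger-Moser} for the convolution nonlinearity: since $F(u)^2$ carries growth $\sim e^{8\pi u^2}$, $c^{*}$ is the level below which the a~priori bound $\limsup_n\|u_n\|^2\le\frac{2\theta c}{\theta-2}$ (valid along $(PS)_c$ sequences by $(f'_3)$) keeps the relevant powers of $u_n$ within the range of validity of \eqref{TM2} for some $\alpha>4\pi$. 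I would obtain $c<c^{*}$ by evaluating $I$ along the ray $\{tu_0:t\ge0\}$ for a function $u_0\ge0$, $u_0\ne0$, nearly optimal for $S_p$, and invoking the lower bound $F(s)\ge C_p s^p$ from $(f'_4)$ with $p>\frac{4-\mu}{2}$: after maximizing in $t$ the leading factor drops out and one is left precisely with a multiple of $(S_p^{2p}/C_p^2)^{1/(p-1)}$, so the quantitative requirement on $C_p$ is exactly what makes $\max_{t\ge0}I(tu_0)<c^{*}$. This level estimate, complicated here by the double convolution integral, is the main obstacle.

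Having fixed $c\in(0,c^{*})$, I would carry out the Palais--Smale analysis. A $(PS)_c$ sequence $\{u_n\}$ is bounded by the Ambrosetti--Rabinowitz condition $(f'_3)$, and the sub-threshold bound keeps $\|\nabla u_n\|_2^2$ below the level needed to apply Lemma~\ref{Trudinger-Moser} with some $\alpha>4\pi$, so Proposition~\ref{HLS} supplies the compactness of the nonlocal term on balls. A Lions-type dichotomy then applies: if $\sup_{y\in\R^2}\int_{B_1(y)}u_n^2\to0$, the nonlocal term (and the term $\int(\frac{1}{|x|^{\mu}}\ast F(u_n))f(u_n)u_n$) tends to $0$, forcing $\|u_n\|\to0$ and $c=0$, a contradiction; hence there are $y_n\in\Z^2$ with $\int_{B_1(y_n)}u_n^2\ge\delta_0>0$. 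By the $1$-periodicity $(W_2)$, $\tilde u_n:=u_n(\cdot+y_n)$ is again a bounded $(PS)_c$ sequence, and up to a subsequence $\tilde u_n\rightharpoonup w\ne0$ in $H^1(\R^2)$; passing to the limit in $I'(\tilde u_n)\to0$ (using local compactness and the standard convergence arguments for the Choquard nonlinearity) gives $I'(w)=0$. Finally, using the standard identification of the mountain--pass level with the least-energy level under $(f'_3)$, together with $I(w)\le c$ (Fatou's lemma applied to $I(\tilde u_n)-\frac12 I'(\tilde u_n)\tilde u_n\ge0$) and $I(w)\ge c$ (as $w$ is a nontrivial critical point), one gets $I(w)=c=\inf\{I(v):v\ne0,\ I'(v)=0\}$, so $w$ is a ground state; extending $f$ by $0$ on $(-\infty,0]$ one has $w\ge0$, and $w>0$ by the strong maximum principle.
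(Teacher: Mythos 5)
Theorem \ref{AQ1} is not proved in this paper; it is imported from reference [AY2]. That said, your blind reconstruction reproduces the mountain-pass plus Lions concentration-compactness framework that the paper itself employs to prove the closely related Theorem \ref{thm-Existence}: $C^1$-regularity of the functional via Cao's inequality (Lemma \ref{Trudinger-Moser}) and Hardy--Littlewood--Sobolev; mountain-pass geometry from the subquadratic behavior of $F$ at $0$ and the Ambrosetti--Rabinowitz condition $(f'_3)$; a sub-threshold level estimate forcing $\limsup_n\|u_n\|_W^2$ along a $(PS)$ sequence into the Trudinger--Moser range; the vanishing/non-vanishing dichotomy; and $\Z^2$-translation via $(W_2)$ to produce a nonzero weak limit. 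The one point where your argument differs in substance from what the paper actually proves is the source of the level estimate: you use $(f'_4)$ and maximize $I$ along a ray near an $S_p$-minimizer, obtaining $\max_{t\ge 0}I(tu_0)$ of the form $C(p)\big(S_p^{2p}/C_p^2\big)^{1/(p-1)}$, which the explicit threshold on $C_p$ in $(f'_4)$ drives below the compactness level; the paper's Lemma \ref{MPlevel-estimate} replaces this with Moser-type test functions under the asymptotic condition $(f_4)$, precisely to avoid the unverifiable constant $S_p$. Your algebra for the $t$-maximum is correct.

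You are, however, too quick in one place, and it is in fact the main technical content of the paper's Lemma \ref{lem-PS}: passing to the limit in $I'(\tilde u_n)\to 0$ to conclude $I'(w)=0$ is not routine under critical exponential growth. A bounded $(PS)$ sequence does not by itself furnish enough local compactness of the nonlocal term $\big[\frac{1}{|x|^\mu}\ast F(u_n)\big]f(u_n)$; the paper first establishes $L^1_{loc}$-convergence of $\big[\frac{1}{|x|^\mu}\ast F(u_n)\big]F(u_n)$ via a truncation and uniform-integrability argument, then derives an $L^1_{loc}$ bound on $\xi_n=\big[\frac{1}{|x|^\mu}\ast F(u_n)\big]f(u_n)$ by testing against $\varphi/(1+u_n)$, and finally identifies the weak$^*$ limit of the measures $\xi_n\,dx$ as absolutely continuous by a Radon--Nikodym argument. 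Dismissing all of this as ``standard convergence arguments for the Choquard nonlinearity'' under-sells what is in fact a delicate step that the paper isolates into a lemma of its own.
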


\noindent A key tool in \cite{AY2} is assumption $(f'_4)$ which enables one to obtain estimates of the Mountain-Pass level for the energy functional related to the nonlocal Choquard equation, for $0<\mu<2$,
\begin{equation}\label{A}
\left\{
\begin{array}{l}
\displaystyle -\Delta u +W(x) u  =\Big(\frac{1}{|x|^{\mu}}\ast F(u)\Big)f(u), \,\,\ \mbox{in} \,\,\, \mathbb{R}^{2}, \\
u \in H^{1}(\mathbb{R}^{2})\\ u(x)>0 \hbox{ for all } x\in \R^2.
\end{array}
\right.
\end{equation}
Condition $(f_4')$ involves the explicit value of the best constant of the embedding $H^1\hookrightarrow L^p$, $p\in (2,\infty)$, which is so far unknown and still an open challenging problem. In terms of the nonlinear source, condition $(f_4')$ prescribe a global growth which can not be actually verified. This somehow affects possible further applications.  The aim of this section is to overcome condition $(f_4')$ which we replace with the assumption $(f_4)$. For this purpose, we set
$$
W_{\rho}:=\sup_{|x|\leq \rho}W(x)
$$
and
$$
\mathcal W:=\inf_{\rho>0}
\frac{e^{\frac{4-\mu}4W_{\rho}\rho^2}}{16 \pi^2
\rho^{4-\mu}}\frac{(4-\mu)^2}{(2-\mu)(3-\mu) }.
$$
Notice that if $W(x)$ is continuous and $(W_2)$ is satisfied, then
$W_{\rho}$ is a positive continuous function and $\mathcal W$ can
be attained by some $\rho>0$. Moreover, it is worth to point out  that assumption $(f_1)-(ii)$ implies that for any $\eta>0$
there exists $C_{\eta}>0$ and $s_{\eta}$ such that for all $s\geq s_{\eta}$
\begin{equation}\label{ARcond}
  \eta f(s)\geq F(s)
\end{equation}
and as $s$ is large enough
$$
F(s)\geq C_{\eta}e^{s^{q+1}}.
$$
On the other hand,
$(f_1)-(ii)$ implies for some $\gamma >0$
$$
F(s)\leq e^{\gamma s^{2}}-1, \qquad \hbox{for any } s>0
$$
which agrees with $(f_2)$. Notice also that assumptions $(f_2)$ and $(f_3)$ yield
$$
K>\frac{4-\mu}{2}>1.
$$
 Assumption $(f_4)$ is inspired by
\cite{DMR,YY}, but here we have the extra difficulty to handle integrals where both the two nonlinearities $F(s)$ and
$sf(s)$ appear simultaneously. This situation forces us to assume
condition $(f_4)$ which is sharper than the following assumption of \cite{DMR}
\begin{equation}\label{f_4 bis}
 \lim_{s\to +\infty} \frac{F(s)}{e^{4\pi s^2}}\geq\gamma.
\end{equation}
 Actually, condition \eqref{f_4 bis}, combined with \eqref{ARcond} implies
$$
\lim_{s\to +\infty} \frac{sf(s)}{e^{4\pi s^2}}\geq\gamma
\eta^{-1} \quad \hbox{for any } \eta >0,
$$
so that $(f_4)$ is trivially satisfied for any choice of
$\gamma>0$. Finally, note that $(f_4)$ together with \eqref{ARcond}
still imply
$$
\lim_{s\to +\infty} \frac{sf(s)}{e^{4\pi s^2}}=+\infty,
$$
but it may happen that
$$
\lim_{s\to +\infty} \frac{F(s)}{e^{4\pi s^2}}=0
$$
 in contrast with \eqref{f_4 bis}. This is the case, for instance, if
$$
F(s)\sim \frac{e^{4\pi s^2}}s \text{ and } f(s)\sim 8\pi e^{4\pi
s^2}, \quad s \to +\infty.
$$
Since we are looking for positive solutions $u\geq 0$, from
now on we assume $f(s)=0$ for $s\leq 0$.
The energy functional associated with problem \eqref{A} is given
by
$$
\aligned \Phi_{W}(u)=\frac12\|u\|^2_{W}-\mathfrak{F}(u),
\endaligned
$$
where
$$
\mathfrak{F}(u)=\frac12\int_{\R^2}\Big[\frac{1}{|x|^{\mu}}\ast
F(u)\Big]F(u)
$$
and
 $$
 \|u\|_{W}:=\left(\int_{\R^2}|\nabla u|^2+ W(x)|u|^2\right)^{1/2}
 $$
Let $E$ denote the space $ H^1(\mathbb R^2)$ equipped with the
norm $\|u\|_{W}$, which is equivalent to the standard Sobolev
norm.

\noindent As a
 consequence of Cao's inequality in Lemma \ref{Trudinger-Moser}, $(f_2)$ and H\"{o}lder's inequality
 we have $F(u)\in L^{\frac{4}{4-\mu}}(\mathbb R^2)$ (note that $(f_2)$ is weaker then
 $(f_1')$ of \cite{AY2}), and the functional $\Phi_W(u)$ is $\mathcal C^1(E)$ thanks
to a generalization of a Lions' result recently proved in
\cite{doOdSdMS}. Then the Mountain Pass geometry can be proved as in \cite{AY2}.
 By the Ekeland Variational Principle \cite{}, there exists a (PS) sequence $(u_n) \subset
E\subset H^1(\mathbb R^2)$ such that
$$
\Phi'_{W}(u_n)\rightarrow0,\quad \Phi_{W}(u_n)\rightarrow {m_{W}},
$$
where the Mountain Pass e $m_{W}$ can be characterized by
\begin{equation} \label{m}
0<m_{W}:=\inf_{\gamma \in \Gamma} \max_{t\in [0,1]}
\Phi_{W}(\gamma (t))
\end{equation}
with
$$
\Gamma:=\left\{\gamma \in \mathcal C^1([0,1], E): \gamma(0)=0,
\Phi_{W}(\gamma(1))<0\right\}.
$$
\begin{lem}\label{MPlevel-estimate}
The mountain pass level $m_{W}$ satisfies
$$
m_{W}<\frac{4-\mu}{8}.
$$
\end{lem}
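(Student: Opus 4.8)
The plan is to estimate $m_W$ from above by evaluating $\Phi_W$ along a suitable one-parameter family of test functions and optimising. The natural choice is a rescaled Moser sequence: for $\rho>0$ fixed and $n\in\mathbb N$, set
$$
w_n(x)=\frac{1}{\sqrt{2\pi}}
\begin{cases}
\sqrt{\log n}, & |x|\le \rho/n,\\[1mm]
\dfrac{\log(\rho/|x|)}{\sqrt{\log n}}, & \rho/n\le |x|\le \rho,\\[1mm]
0, & |x|\ge\rho,
\end{cases}
$$
so that $\|\nabla w_n\|_2^2=1$ and $\|w_n\|_{L^2}^2=O(1/\log n)$. Since $\Phi_W(tw_n)\to-\infty$ as $t\to+\infty$ (because $K>1$ in $(f_3)$ forces $\mathfrak F$ to grow superquadratically along rays), the mountain pass level obeys $m_W\le\max_{t\ge 0}\Phi_W(tw_n)$ for every $n$. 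So it suffices to show that for $n$ large and a good choice of $\rho$ this max is strictly below $(4-\mu)/8$.

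First I would record the bound $\|w_n\|_W^2\le 1+W_\rho\|w_n\|_2^2=1+O(1/\log n)$, so that the quadratic part contributes at most $\tfrac12 t^2(1+o(1))$. The main work is bounding the nonlocal term from below. Let $t_n$ be the maximising $t$; from the criticality of $t_n$ one has $t_n^2\|w_n\|_W^2 = 2t_n\langle\mathfrak F'(t_n w_n),w_n\rangle$, and using $(f_3)$ (i.e. $sf(s)>KF(s)$) together with the convolution structure one deduces $t_n^2$ stays bounded and, more precisely, that if $t_n^2\ge (4-\mu)/4$ fails to hold one already gets $m_W<(4-\mu)/8$ directly from $\Phi_W(t_nw_n)\le \tfrac12 t_n^2(1+o(1))$. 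Hence we may assume $t_n^2\to \ell\ge (4-\mu)/4$. On the annulus $|x|\le\rho/n$ one has $t_n w_n(x)=t_n\sqrt{\log n}/\sqrt{2\pi}$, so $e^{8\pi(t_nw_n)^2}=n^{4t_n^2}$; combining the lower bound in $(f_4)$, namely $sf(s)F(s)\ge(\beta+o(1))e^{8\pi s^2}$, with the Hardy–Littlewood–Sobolev / direct estimate of $\int\!\int \frac{F(t_nw_n(x))f(t_nw_n(y))t_nw_n(y)}{|x-y|^\mu}$ restricted to $|x|,|y|\le\rho/n$ (here $|x-y|^{-\mu}\ge (2\rho/n)^{-\mu}$), one gets a lower bound of order $\beta\, n^{4t_n^2-4+\mu}\,\rho^{4-\mu}\cdot(\text{const})$ for the relevant piece of $\langle\mathfrak F'(t_nw_n),w_n\rangle$ — this is where the exponent $8\pi$ and the precise form of $(f_4)$ are used. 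Plugging into $t_n^2\|w_n\|_W^2=2t_n\langle\mathfrak F'(t_nw_n),w_n\rangle$ forces $4t_n^2-4+\mu\le 0$ (otherwise the right side blows up), hence $t_n^2\to(4-\mu)/4$, and then a sharper bookkeeping of the constants — keeping track of $e^{\frac{4-\mu}{4}W_\rho\rho^2}$ coming from $\|w_n\|_W^2$ and of the factor $\frac{(4-\mu)^2}{(2-\mu)(3-\mu)}$ coming from the radial integral $\int_0^\rho\!\int_0^\rho (rs)^{1-\mu/2}\,dr\,ds$-type computation — yields
$$
\beta \ \le\ \frac{e^{\frac{4-\mu}{4}W_\rho\rho^2}}{16\pi^2\rho^{4-\mu}}\,\frac{(4-\mu)^2}{(2-\mu)(3-\mu)}
$$
as a \emph{necessary} consequence of $m_W\ge(4-\mu)/8$. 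Since $(f_4)$ assumes the reverse strict inequality after taking $\inf_{\rho>0}$, choosing $\rho$ realising (or nearly realising) the infimum gives a contradiction, so $m_W<(4-\mu)/8$.

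The main obstacle, and the point requiring the most care, is the two-nonlinearity nonlocal integral: unlike the classical Trudinger–Moser mountain-pass estimates where a single $F(u)$ appears, here one must simultaneously control $F(t_nw_n)$ and $t_nw_n f(t_nw_n)$ inside the convolution, and the naive split of $\mathbb R^2\times\mathbb R^2$ into the core annulus versus the rest must be done so that the cross terms and the $\rho/n\le|x|\le\rho$ logarithmic part are genuinely negligible compared to $\beta\rho^{4-\mu}$. Making the optimisation over $t_n$ and $\rho$ interact correctly — in particular extracting the sharp constant $\frac{(4-\mu)^2}{(2-\mu)(3-\mu)}$ rather than a crude one — is the delicate bookkeeping; everything else (Moser functions, superquadraticity from $(f_3)$, $\mathcal C^1$ regularity of $\Phi_W$) is standard and already available in the excerpt.
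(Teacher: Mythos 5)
Your plan is essentially the paper's proof: same Moser sequence $w_n$ supported in $B_\rho$ normalized by $\sqrt{1+\delta_n}$ with $\delta_n=O(1/\log n)$, same use of the extremality condition $\frac{d}{dt}\Phi_W(tw_n)|_{t=t_n}=0$ to get $t_n^2=\langle \mathfrak F'(t_nw_n),t_nw_n\rangle$ and $t_n^2\geq\tfrac{4-\mu}{4}$, same invocation of $(f_4)$ in the form $sf(s)F(s)\geq(\beta-\varepsilon)e^{8\pi s^2}$ for $s\geq s_\varepsilon$, same restriction of the double integral to the core ball $B_{\rho/n}\times B_{\rho/n}$ (where $w_n$ is constant so $e^{8\pi(t_nw_n)^2}=n^{4t_n^2/(1+\delta_n)}$), same conclusion that $t_n^2\to\left(\tfrac{4-\mu}{4}\right)^+$ and the final inequality $\beta\leq\frac{e^{\frac{4-\mu}{4}W_\rho\rho^2}}{16\pi^2\rho^{4-\mu}}\frac{(4-\mu)^2}{(2-\mu)(3-\mu)}$ contradicting $(f_4)$. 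The one step you flag but do not carry out is exactly where the paper works hardest: it does not brute-force a "cross-term" estimate, but instead (i) refines $t_n^2\geq\tfrac{4-\mu}{4}$ to $\tfrac{t_n^2}{1+\delta_n}=\tfrac{4-\mu}{4}+O(1/\log n)$, which is needed so that Trudinger--Moser controls the exponential integral $\int e^{\frac{16\pi}{4-\mu}t_n^2w_n^2}$; and (ii) replaces the "core ball vs.\ rest" split by a split along the super-level set $A_n=\{t_nw_n>s_\varepsilon\}$ and its complement $B_n$, using Hardy--Littlewood--Sobolev plus dominated convergence to kill the $B_n$ contribution before further restricting $A_n\supset B_{\rho/n}$. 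Also, for the sharp constant the paper does not use your pointwise bound $|x-y|^{-\mu}\geq(2\rho/n)^{-\mu}$ but the nested-ball computation $\int_{B_{\rho/n}}\!\int_{B_{\rho/n}}|x-y|^{-\mu}=\frac{4\pi^2}{(2-\mu)(3-\mu)(4-\mu)}(\rho/n)^{4-\mu}$; either route gives a valid lower bound, but the latter is what produces the exact constant appearing in $(f_4)$. So: correct route, with the delicate middle step identified but left to be filled in as above.
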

\begin{proof}
It is enough  to prove that there exists s a function $w\in E,
\|w\|_{W}=1$, such that
$$
\max_{t\geq 0}\Phi_{W}(tw)<\frac{4-\mu}{8}.
$$
Let us introduce the following Moser type
functions supported in $B_{\rho}$ by
$$
\overline{w}_n=\frac{1}{\sqrt{2\pi}}\left\{%
\begin{array}{ll}
\displaystyle    \sqrt{\log n}, & 0\leq |x|\leq \frac{\rho}n, \\
\\
\displaystyle    \frac{\log(\rho/|x|)}{\sqrt{\log n}}, & \frac{\rho}n\leq |x|\leq {\rho}, \\
\\
0,& |x|\geq {\rho}. \\
\end{array}%
\right.
$$
One has that
\begin{eqnarray}\label{delta_n1}
\nonumber \|\overline w_n\|^2_{W}&=&\int_{B_{\rho}}|\nabla\overline
w_n|^2+\int_{B_{\rho}}W(x)|\overline w_n|^2\\
\nonumber &\leq&
\int_{{\rho}/n}^{\rho}\frac{dr}{r\log
n}dr+W_{\rho}\int_0^{{\rho}/n}\log n\,
rdr+W_{\rho}\int_{{\rho}/n}^{\rho}\frac{\log^2(\rho/r)}{\log n}\,rdr\\
\nonumber&=&1 +\delta_n,
\end{eqnarray}
where
\begin{eqnarray}\label{delta_n2}
\delta_n= W_{\rho}\rho^2\left[\frac{1}{4\log
n}-\frac{1}{4n^2\log n}-\frac{1}{2n^2}\right]>0.
\end{eqnarray}
And then, setting $w_n=\overline w_n/\sqrt{1+\delta_n}$, we get
$\|w_n\|_{W}=1$.

 \noindent We claim that there exists $n$ such that
\begin{equation}\label{claim}
\max_{t\geq 0}\Phi_{W}(tw_n)<\frac{4-\mu}{8}.
\end{equation}
Let us argue by contradiction and suppose this is not the case, so that for all
$n$ let $t_n>0$ be such that
\begin{equation}\label{bycontr-assump}
\max_{t\geq 0}\Phi_{W}(tw_n)=\Phi_{W}(t_nw_n)\geq\frac{4-\mu}{8},
\end{equation}
then $t_n$ satisfies $\frac{d}{dt}\Phi_{W}(tw_n)|_{t=t_n}=0$, then
\begin{equation}\label{t_n^2=}
t^2_n= \int_{\mathbb R^2}\left[\frac{1}{|x|^{\mu}}\ast
F(t_nw_n)\right]t_nw_nf(t_nw_n),
\end{equation}
it follows from \eqref{bycontr-assump} that
\begin{equation}\label{est-t_n^2}
t_n^2\geq \frac{4-\mu}{4}.
\end{equation}
Let us estimate from below the quantity $t_n^2$. Taking advantage of equation
\eqref{t_n^2=}, thanks to $(f_4)$ we have for any $\varepsilon>0$,
\begin{equation}\label{estimate-sfF}
  sf(s)F(s)\geq (\beta-\varepsilon)e^{8\pi s^2} \quad \hbox{ for all
}s\geq s_{\varepsilon}
\end{equation}
and thus
\begin{eqnarray*}
 t_n^2&\geq& \int_{B_{\rho/n}}t_nw_nf(t_nw_n)dy\int_{B_{\rho/n}}\frac{1}{|x-y|^{\mu}}F(t_nw_n)\,dx\\
&=& \int_{B_{\rho/n}}t_n\frac{\sqrt{\log n}}{\sqrt{
2\pi}}f\left(t_n\frac{\sqrt{\log n}}{\sqrt{
2\pi}}\right)dy\int_{B_{\rho/n}}\frac{1}{|x-y|^{\mu}}
F\left(t_n\frac{\sqrt{\log n}}{\sqrt{2\pi}}\right)\,dx\\
 &\geq& (\beta - \varepsilon)e^{4 t_n^2 (1+\delta_n)^{-1} \log
  n}\int_{B_{\rho/n}}dy\int_{B_{\rho/n}}\frac{1}{|x-y|^{\mu}}\,dx.
\end{eqnarray*}
Notice that $B_{\rho/n-|x|}(0)\subset B_{\rho/n}(x)$ since $|x|\leq \rho/n$, the last integral can be estimated as follows
\begin{eqnarray} \label{intconv}
\aligned
  \int_{B_{\rho/n}}dy\int_{B_{\rho/n}}\frac{dx}{|x-y|^{\mu}}
 &=\int_{B_{\rho/n}}dx\int_{B_{\rho/n}(x)}\frac{dz}{|z|^{\mu}}\\
  &\geq
  \int_{B_{\rho/n}}dx\int_{B_{\rho/n-|x|}}\frac{dz}{|z|^{\mu}}\\
  &=\frac{2\pi}{2-\mu}\int_{B_{\rho/n}}\left(\frac{\rho}n-|x|\right)^{2-\mu}\\
  &=\frac{4\pi^2}{2-\mu}\int_0^{\rho/n}\left(\frac{\rho}n-r\right)^{2-\mu}rdr\\
  &=\frac{4\pi^2}{(2-\mu)(3-\mu)(4-\mu)}\left(\frac{\rho}{n}\right)^{4-\mu}\\
 &=C_{\mu}\left(\frac{\rho}{n}\right)^{4-\mu},
 \endaligned
\end{eqnarray}
where
\begin{eqnarray*}
C_{\mu}=\frac{4\pi^2}{(2-\mu)(3-\mu)(4-\mu)}.
\end{eqnarray*}
Consequently, we obtain
\begin{eqnarray*}
 t_n^2&\geq&\frac{4\pi^2 (\beta - \varepsilon)}{(2-\mu)(3-\mu)(4-\mu)} e^{4 t_n^2 (1+\delta_n)^{-1}\log
  n}\left(\frac{\rho}{n}\right)^{4-\mu}\\
&=&\frac{4\pi^2 (\beta - \varepsilon)
\rho^{4-\mu}}{(2-\mu)(3-\mu)(4-\mu)}e^{\log n[4
(1+\delta_n)^{-1}t_n^2-(4-\mu)]}
\end{eqnarray*}
which, recalling \eqref{est-t_n^2}, means that $t_n$ is bounded and yields
$$
t_n^2\longrightarrow \left(\frac{4-\mu}{4}\right)^+
$$
as $n$ goes to infinity. Moreover, as a byproduct we also have that for some $C>0$
\begin{eqnarray*}
  \log n[4
(1+\delta_n)^{-1}t_n^2-(4-\mu)]\leq C,
\end{eqnarray*}
that is
\begin{equation}\label{est-t_n^2-bis}
\frac{t^2_n}{1+\delta_n}=
\frac{4-\mu}{4}+{\textrm{O}}\left(\frac{1}{\log n}\right).
\end{equation}
This estimate will be used to obtain a finer estimate than \eqref{est-t_n^2}. Notice first that by
$(f_1)$ and $(f_2)$ we have
\begin{equation}\label{estimate-F}
  F(s)\leq Cs{^\frac{4-\mu}{2}}+Mf(s)\leq Cs{^\frac{4-\mu}{2}}+C(e^{4\pi s^2}-1).
\end{equation}
Next define
$$
A_n=\left\{y\in B_{\rho}: t_nw_n(y)>s_{\varepsilon}\right\}\quad \hbox{and} \quad
B_n= B_{\rho}\setminus A_n,
$$
where $s_{\varepsilon}$ was introduced in \eqref{estimate-sfF}.
By \eqref{estimate-sfF} we know
\begin{multline*}
\aligned
t^2_n&=\int_{\mathbb R^2}\left(\frac{1}{|x|^{\mu}}\ast
F(t_nw_n)\right)t_nw_nf(t_nw_n)\,dy\\
&=\int_{B_{\rho}}\left(\frac{1}{|x|^{\mu}}\ast
F(t_nw_n)\right)t_nw_nf(t_nw_n)\,dy\\
&=\int_{A_n}\left(\frac{1}{|x|^{\mu}}\ast
F(t_nw_n)\right)t_nw_nf(t_nw_n)\,dy+\int_{B_n}\left(\frac{1}{|x|^{\mu}}\ast
F(t_nw_n)\right)t_nw_nf(t_nw_n).
\endaligned
\end{multline*}
Combining Hardy-Littlewood-Sobolev inequality with
\eqref{estimate-F} one has
\begin{multline}\label{finer-est1}
\int_{B_n}\left(\frac{1}{|x|^{\mu}}\ast
F(t_nw_n)\right)t_nw_nf(t_nw_n)\leq C
\|F(t_nw_n)\|_{\frac{4}{4-\mu}}\|\chi_{B_n}t_nw_nf(t_nw_n)\|_{\frac{4}{4-\mu}}\\
\leq\left[ C \|t_nw_n\|_{2}+C\left\{\int_{\mathbb
R^2}e^{4\pi\frac{4}{4-\mu}
t_n^2w_n^2}-1\right\}^{\frac{4-\mu}{4}}\right]\|\chi_{B_n}t_nw_nf(t_nw_n)\|_{\frac{4}{4-\mu}}.
\end{multline}
By \eqref{est-t_n^2-bis}, since $\|\nabla
\overline w_n\|_2=1$ and $\overline w^2_n\leq 2\pi \log n $, we obtain
\begin{multline*}
  \int_{\mathbb
R^2}e^{4\pi\frac{4}{4-\mu}
t_n^2w_n^2}-1\leq\int_{B_\rho}e^{4\pi\frac{4}{4-\mu}
t_n^2w_n^2}\leq \int_{B_\rho}e^{4\pi(1+\frac C{\log n})\overline
w_n^2}\leq\int_{B_\rho}Ce^{4\pi\overline w_n^2}\leq C,
\end{multline*}
due to the Pohozaev-Trudinger-Moser inequality. Since $t_nw_n\to 0$
a.e. and $t_nw_n$ is bounded on $B_n$, applying
the Lebesgue dominated convergence theorem, we obtain

$$\|\chi_{B_n}t_nw_nf(t_nw_n)\|_{\frac{4}{4-\mu}}\to 0,$$ as $n\to \infty$.
Consequently,
\begin{equation}\label{est-t_n^2-tris}
t_n^2=\int_{A_n}\left(\frac{1}{|x|^{\mu}}\ast
F(t_nw_n)\right)t_nw_nf(t_nw_n)\,dy+\rm{o}(1),
\end{equation}
where o$(1)$ is actually positive.

Buying the same lines we can estimate the convolution term as follows
\begin{eqnarray*}
  t_n^2&\geq&  \int_{A_n}t_nw_nf(t_nw_n)\,dy\int_{A_n}\frac{F(t_nw_n)}{|x-y|^{\mu}}\,dx+%
\int_{A_n}t_nw_nf(t_nw_n)\,dy\int_{B_n}\frac{F(t_nw_n)}{|x-y|^{\mu}}\,dx\\
&\geq&
\int_{A_n}t_nw_nf(t_nw_n)\,dy\int_{A_n}\frac{F(t_nw_n)}{|x-y|^{\mu}}\,dx+\rm{o}(1).
\end{eqnarray*}
 By the definition of $w_n$, we observe that

\begin{equation}\label{calculA_n}
A_n=\{0<|x|<\rho
e^{-s_{\varepsilon}\sqrt{2\pi(1+\delta_n)}\sqrt{\log n}}\}\supset
B_{\frac{\rho}{n}},
\end{equation}
then
\begin{eqnarray}\label{intAnAn}
\nonumber  t_n^2&\geq&  \int_{A_n}t_nw_nf(t_nw_n)\,dy\int_{A_n}\frac{F(t_nw_n)}{|x-y|^{\mu}}\,dx \\
\nonumber&\geq&
\int_{B_{\rho/n}}t_nw_nf(t_nw_n)dy\int_{B_{\rho/n}}\frac{F(t_nw_n)}{|x-y|^{\mu}}
\,dx\\
\nonumber&&\quad+\int_{\frac{\rho}{n}\leq |x|\cap x\in
A_n}t_nw_nf(t_nw_n)dy\int_{B_{\rho/n}}\frac{F(t_nw_n)}{|x-y|^{\mu}}
\,dx   \\
\nonumber &&\quad+\int_{B_{\rho/n}}t_nw_nf(t_nw_n)dy\int_{\frac{\rho}{n}\leq
|x|\cap x\in A_n}\frac{F(t_nw_n)}{|x-y|^{\mu}} \,dx\\
\nonumber &&\quad+\int_{\frac{\rho}{n}\leq |x|\cap x\in
A_n}t_nw_nf(t_nw_n)dy\int_{\frac{\rho}{n}\leq |x|\cap x\in
A_n}\frac{F(t_nw_n)}{|x-y|^{\mu}} \,dx
\\
\nonumber&:=&I_1+I_2+I_3+I_4\\
&\geq& I_1\geq(\beta -\varepsilon)e^{8\pi t_n^2
w_n^2}\int_{B_{\rho/n}}dy\int_{B_{\rho/n}}\frac{1}{|x-y|^{\mu}}
\,dx
\end{eqnarray}
where we have used the fact that $w_n$ is constant on the ball $B_{\rho/n}$. Thanks to
\eqref{intconv} we have
\begin{eqnarray}\label{intAnAn2}
\nonumber I_1&\geq& (\beta -\varepsilon)e^{4 t_n^2(1+\delta_n)^{-1} \log n} \int_{|y|\leq
\frac{\rho}{n}}dy\int_{|x|\leq
\frac{\rho}{n}}\frac{1}{|x-y|^{\mu}}\,dx\\
&\geq& (\beta -\varepsilon)C_\mu  e^{4
t_n^2(1+\delta_n)^{-1} \log n}\left(\frac{\rho}{n}\right)^{4-\mu}
\end{eqnarray}
and hence, recalling the definition of $\delta_n$ in \eqref{delta_n2}, we also have
\begin{eqnarray*}
I_1&\geq& (\beta -\varepsilon)C_{\mu}\rho^{4-\mu}e^{4 t_n^2(1+\delta_n)^{-1} \log
n-(4-\mu)\log n}\\
&\geq& (\beta -\varepsilon)C_{\mu}\rho^{4-\mu}e^{(4-\mu)\log n[
(1+\delta_n)^{-1} -1]}\\
&\geq& (\beta -\varepsilon)C_{\mu}\rho^{4-\mu}e^{-(4-\mu)\delta_n\log
n}\\
&=&(\beta -\varepsilon)C_{\mu}\rho^{4-\mu}e^{-(4-\mu)W_{\rho}\rho^2\left[\frac{1}{4}-\frac{1}{4n^2}-\frac{\log
n}{2n^2}\right]}\\
&\to&
(\beta -\varepsilon)C_{\mu}\rho^{4-\mu}e^{-\frac{4-\mu}4W_{\rho}\rho^2},
\end{eqnarray*}
as $n\to +\infty$. Combining the previous inequality with \eqref{intAnAn} and passing
to the limit we get
$$
\frac{4-\mu}{4}\geq (\beta -\varepsilon)
C_{\mu}\rho^{4-\mu}e^{-\frac{4-\mu}4W_{\rho}\rho^2}
$$
and since $\varepsilon$ is arbitrary, in turn
$$
\beta \leq \frac{4-\mu}{4C_{\mu}
\rho^{4-\mu}}e^{\frac{4-\mu}4W_{\rho}\rho^2}=
\frac{e^{\frac{4-\mu}4W_{\rho}\rho^2}}{16 \pi^2
\rho^{4-\mu}}\frac{(4-\mu)^2}{(2-\mu)(3-\mu) }
$$
However, by definition of $\mathcal W$ and since $\beta>\mathcal{W}$ by
 $(f_4)$, there exists $\rho>0$ such that
\begin{equation}\label{beta_below}
\beta>\frac{e^{\frac{4-\mu}4W_{\rho}\rho^2}}{16 \pi^2
\rho^{4-\mu}}\frac{(4-\mu)^2}{(2-\mu)(3-\mu) }
\end{equation}
and thus a contradiction and this concludes the proof.
\end{proof}
\begin{Rem}
It is worth to mention that actually estimate \eqref{intAnAn} can be improved, in the sense that the constant $\mathcal W$ can be sharpened by exploiting $I_2$,
$I_3$ and $I_4$ and some additional technical growth assumptions on $f(s)$, which we omit here since do not bring to effective advantages in this context.
\end{Rem}

In the spirit of \cite{YY} we next prove that the limit of a Palais-Smale
sequence for $\Phi_V$ yields a weak solution to \eqref{A}.
\begin{lem}\label{lem-PS}
Assume $(W_1)-(W_2), (f_1)-(f_4)$ and let
$\{u_n\}\subset E$ be a Palais-Smale sequence for
$\Phi_W$, i.e.
$$
\Phi_W(u_n)\to c\quad\text{ and }\quad  \Phi_W'(u_n)\to 0 \quad \hbox{in }
E^*, \quad \hbox{ as } n\to +\infty
$$
Then there exists $u\in E$ such that, up to subsequence,
$u_n\rightharpoonup u$ weakly in $E$,
\begin{equation}\label{convFF}
\left[\frac{1}{|x|^\mu}\ast
F(u_n)\right]F(u_n)\rightarrow\left[\frac{1}{|x|^\mu}\ast
F(u)\right]F(u), \quad \hbox{ in }\quad  L^1_{loc}(\mathbb R^2)
\end{equation}
and $u$ is a weak solution of \eqref{A}.
\end{lem}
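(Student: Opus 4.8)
The plan is to follow the scheme used for Palais--Smale sequences of Choquard-type functionals with critical Trudinger--Moser growth, in the spirit of \cite{YY}. First I would test the asymptotic relations against $u_n$: since the Riesz kernel is positive and $F\ge0$, condition $(f_3)$ gives $\int_{\R^2}\big[\frac{1}{|x|^\mu}\ast F(u_n)\big]f(u_n)u_n\ge K\int_{\R^2}\big[\frac{1}{|x|^\mu}\ast F(u_n)\big]F(u_n)$, whence
$$
\Phi_W(u_n)-\frac{1}{2K}\langle\Phi_W'(u_n),u_n\rangle\ \ge\ \Big(\frac12-\frac1{2K}\Big)\|u_n\|_W^2 .
$$
As the left-hand side equals $c+o(1)+o(1)\|u_n\|_W$ and $K>1$, this forces $\{u_n\}$ to be bounded in $E$. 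Up to a subsequence, $u_n\rightharpoonup u$ in $E$, $u_n\to u$ in $L^q_{\mathrm{loc}}(\R^2)$ for all $q\in[1,\infty)$ and $u_n\to u$ a.e.; moreover, from $\langle\Phi_W'(u_n),u_n\rangle\to0$ and the bound on $\Phi_W(u_n)$ both $\int_{\R^2}\big[\frac{1}{|x|^\mu}\ast F(u_n)\big]f(u_n)u_n$ and $\int_{\R^2}\big[\frac{1}{|x|^\mu}\ast F(u_n)\big]F(u_n)$ stay bounded.

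If $u\equiv0$ there is nothing to prove, since $u=0$ solves \eqref{A} (both sides vanish, as $F(0)=f(0)=0$). So assume $u\not\equiv0$ and fix a ball $B$ with $\int_B F(u)>0$. By $F(u_n)\to F(u)$ a.e., $F\ge0$ and Fatou's lemma, $\int_B F(u_n)\ge\delta>0$ for $n$ large; hence for every $R>0$,
$$
\inf_{x\in B_R}\Big(\frac{1}{|x|^\mu}\ast F(u_n)\Big)(x)\ \ge\ \inf_{x\in B_R}\int_B\frac{F(u_n(y))}{|x-y|^\mu}\,dy\ \ge\ \kappa_R>0 .
$$
Together with the boundedness of $\int_{\R^2}\big[\frac{1}{|x|^\mu}\ast F(u_n)\big]f(u_n)u_n$ this gives $\int_{B_R}f(u_n)u_n\le C(R)$; then $(f_1)(ii)$, with $f$ and $F$ bounded on $\{0\le s\le s_0\}$, shows that $f(u_n)$, $F(u_n)$, $f(u_n)u_n$ and $F(u_n)u_n$ are bounded in $L^1(B_R)$ for every $R$. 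A de Figueiredo--Miyagaki--Ruf type compactness lemma \cite{DMR} (in the generalized form of \cite{doOdSdMS}) then yields $f(u_n)\to f(u)$ and $F(u_n)\to F(u)$ in $L^1_{\mathrm{loc}}(\R^2)$.

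Next, from the $L^1_{\mathrm{loc}}$ convergence of $F(u_n)$ and estimates for the Riesz potential -- splitting the kernel into its singular part near the origin (handled by Young's inequality and the local $L^1$ bounds on $F(u_n)$, which in fact bound $\frac{1}{|x|^\mu}\ast F(u_n)$ in $L^{1+\sigma}_{\mathrm{loc}}$ for small $\sigma>0$ since $\mu<2$) and its bounded tail -- one obtains $\frac{1}{|x|^\mu}\ast F(u_n)\to\frac{1}{|x|^\mu}\ast F(u)$ in $L^1_{\mathrm{loc}}(\R^2)$, and a.e. along a further subsequence; with the previous step this proves \eqref{convFF}. Finally, for $\varphi\in C_0^\infty(\R^2)$ I would pass to the limit in $\int_{\R^2}\big[\frac{1}{|x|^\mu}\ast F(u_n)\big]f(u_n)\varphi$ by Vitali's theorem on $\mathrm{supp}\,\varphi$: the integrand converges a.e. by the above, and equi-integrability follows by separating the region $\{|u_n|\le M\}$, where $f(u_n)$ is bounded and one uses equi-integrability of $\frac{1}{|x|^\mu}\ast F(u_n)$, from $\{|u_n|>M\}$, where $M f(u_n)\le f(u_n)u_n$ so that the contribution is at most $M^{-1}\int_{\R^2}\big[\frac{1}{|x|^\mu}\ast F(u_n)\big]f(u_n)u_n$, uniformly small. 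Letting $n\to\infty$ in
$$
\langle\Phi_W'(u_n),\varphi\rangle=\int_{\R^2}\nabla u_n\cdot\nabla\varphi+\int_{\R^2}W(x)u_n\varphi-\int_{\R^2}\big[\tfrac{1}{|x|^\mu}\ast F(u_n)\big]f(u_n)\varphi\ \longrightarrow\ 0,
$$
with the first two terms converging by weak convergence in $E$, we conclude that $u$ is a weak solution of \eqref{A}.

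The main obstacle is that the critical exponential growth of $f$ rules out Sobolev embeddings and any fixed $L^p$-dominating function, so neither the convergence of $f(u_n)$ nor that of the convolution can be obtained by a plain dominated-convergence argument; everything must be squeezed out of the single a priori bound $\int_{\R^2}\big[\frac{1}{|x|^\mu}\ast F(u_n)\big]f(u_n)u_n\le C$ coming from the Palais--Smale condition. Turning this into local information forces the uniform positivity of the Riesz potential on compacts (hence the reduction to $u\not\equiv0$ and the Fatou argument), and it is again this bound -- not any pointwise domination -- that prevents concentration of the exponential nonlinearity and supplies the equi-integrability needed for Vitali's theorem. The most delicate point left is the bounded-tail estimate for $\frac{1}{|x|^\mu}\ast F(u_n)$, since $F(u_n)$ is controlled in $L^1$ only locally; this is where one must argue carefully, exploiting the global control on the weighted integral $\int_{\R^2}\big[\frac{1}{|x|^\mu}\ast F(u_n)\big]F(u_n)$.
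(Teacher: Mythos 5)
Your scheme is genuinely different from the paper's, and it does not close. Two gaps are serious. First, the reduction ``if $u\equiv 0$ there is nothing to prove'' only disposes of the conclusion that $u$ is a weak solution; it does \emph{not} dispose of \eqref{convFF}, which in the case $u=0$ asserts that $\big[\tfrac{1}{|x|^\mu}\ast F(u_n)\big]F(u_n)\to 0$ in $L^1_{\mathrm{loc}}$, a nontrivial statement for a general Palais--Smale sequence (no level restriction is assumed here). Your subsequent use of the positivity of $\tfrac{1}{|x|^\mu}\ast F(u_n)$ on compacts, which drives the whole local $L^1$ bookkeeping, is only available precisely when $u\not\equiv 0$, so the argument silently excludes a case the lemma must cover. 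Second, the convergence $\tfrac{1}{|x|^\mu}\ast F(u_n)\to\tfrac{1}{|x|^\mu}\ast F(u)$ in $L^1_{\mathrm{loc}}$ is not established: you only control $F(u_n)$ in $L^1$ on compact sets, and the tail $\int_{|y|>R}F(u_n(y))|x-y|^{-\mu}\,dy$ is not controlled by the available estimates. The global quantity $\int\big[\tfrac{1}{|x|^\mu}\ast F(u_n)\big]F(u_n)\le C$ is a weighted quadratic functional, not an $L^1$ bound on $F(u_n)$, and (since the $H^1$ bound on $u_n$ does not by itself give a uniform $L^{4/(4-\mu)}$ bound on $F(u_n)$ in the critical Trudinger--Moser regime) neither HLS nor Young's inequality yields a uniform tail estimate. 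You flag this point yourself, but it is in fact where the argument breaks; it is not merely ``delicate''. Since both your proof of \eqref{convFF} and your Vitali argument for passing to the limit in $\int\big[\tfrac{1}{|x|^\mu}\ast F(u_n)\big]f(u_n)\varphi$ rely on $L^1_{\mathrm{loc}}$ convergence (hence local equi-integrability) of the Riesz potential, this gap propagates through the whole proof.

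The paper avoids both issues. For \eqref{convFF} it never tries to pass the Riesz potential to the limit by itself: it truncates \emph{both} factors, using $(f_1)(ii)$ together with the global a priori bound $\int\big[\tfrac{1}{|x|^\mu}\ast F(u_n)\big]u_nf(u_n)\le C$ to make the contribution of $\{u_n\ge M\}$ (in either slot of the convolution) uniformly small, and then applies generalized dominated convergence on the remaining bounded range using $(f_2)$ and local $L^s$ compactness; this works verbatim when $u=0$. For the weak-solution conclusion it does not invoke equi-integrability of the convolution at all: it tests the Palais--Smale relation with $v_n=\varphi/(1+u_n)$ to obtain the local $L^1$ bound \eqref{boundbis} on $\big[\tfrac{1}{|x|^\mu}\ast F(u_n)\big]f(u_n)/(1+u_n)$, combines this with the global bound on $\int[\cdot]u_nf(u_n)$ to bound $\xi_n=\big[\tfrac{1}{|x|^\mu}\ast F(u_n)\big]f(u_n)$ in $L^1_{\mathrm{loc}}$, and then argues by weak-$\ast$ compactness of the associated measures plus the equation itself (which forces absolute continuity of the limit measure) and the Radon--Nikod\'ym theorem. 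Your boundedness argument for $\{u_n\}$ and the DMR-type local $L^1$ convergence of $f(u_n)$, $F(u_n)$ are both fine; what is missing is a substitute for the paper's double truncation and for the $\varphi/(1+u_n)$ trick.
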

\begin{proof}By hypothesis we have
\begin{equation}\label{convPhi}
\frac 12 \|u_n\|_W^2-\frac 12 \int_{\mathbb
R^2}\left[\frac{1}{|x|^\mu}\ast F(u_n)\right]F(u_n)\to c
\end{equation}
as well as
\begin{eqnarray}\label{convPhi'}
\nonumber\left|\int_{\mathbb R^2}\nabla u_n\nabla v+Wu_nv-
\int_{\mathbb R^2}\left[\frac{1}{|x|^\mu}\ast F(u_n)\right]f(u_n)v
\right|\leq \tau_n \|v\|_W
\end{eqnarray}
for all $v\in E$, where $\tau_n\to 0$ as $n\to
+\infty$. Taking $v=u_n$ in \eqref{convPhi'} we obtain
\begin{equation}\label{convPhi''}
\left|\|u_n\|_W^2- \int_{\mathbb R^2}\left[\frac{1}{|x|^\mu}\ast
F(u_n)\right]u_nf(u_n) \right| \leq \tau_n \|u_n\|_W.
\end{equation}
By
$(f_1)$ that for any $s>0$ one has $sf(s)\geq K F(s)$ . Then,
\begin{eqnarray*}
  \int_{\mathbb
R^2}\left[\frac{1}{|x|^\mu}\ast F(u_n)\right]u_nf(u_n)\geq K
\int_{\mathbb R^2}\left[\frac{1}{|x|^\mu}\ast F(u_n)\right]F(u_n)
\end{eqnarray*}
so that
\begin{eqnarray*}
  \frac12\left(1-\frac 1K\right)\|u_n\|_W^2\leq \Phi_W(u_n)-\frac
1{2K}\langle\Phi'_W(u_n),u_n\rangle\leq \frac c2+\frac{\tau_n}{2K}\|u_n\|_W
\end{eqnarray*}
which implies that $\|u_n\|_W$ is bounded. As a consequence we have from \eqref{convPhi} and \eqref{convPhi'} that
\begin{equation}\label{bound}
\int_{\mathbb R^2}\left[\frac{1}{|x|^\mu}\ast
F(u_n)\right]F(u_n)\leq C, \quad \int_{\mathbb
R^2}\left[\frac{1}{|x|^\mu}\ast F(u_n)\right]u_nf(u_n)\leq C
\end{equation}
with $C$ independent of $n$. Moreover,  $u_n\rightharpoonup u$, $u_n\to u$ in $L^q_{loc}(\mathbb R^2)$ for any $1\leq q<\infty$ and
$u_n\to u$ a.e. in $\mathbb R^2$.

\noindent Next let us prove \eqref{convFF},
that is,
\begin{equation*}
\left|\int_{\Omega}\left[\frac{1}{|x|^\mu}\ast
F(u_n)\right]F(u_n)dx-\int_{\Omega}\left[\frac{1}{|x|^\mu}\ast
F(u)\right]F(u)dx\right|\to 0, \quad \forall\,\Omega \subset\subset \mathbb
R^2
\end{equation*}
This can be done as in \cite[Lemma 2.1]{DMR}. Indeed, since
$u\in H^1(\mathbb R^2)$, then $ \left[\frac{1}{|x|^\mu}\ast F(u)\right]F(u) \in
L^1(\mathbb R^2)$, so that
$$
\lim_{M\to \infty}\int_{\{u\geq M\}} \left[\frac{1}{|x|^\mu}\ast
F(u)\right]F(u)dx=0.
$$
Let $C$ be the constant in \eqref{bound} and  $M_0$ the constant in
$(f_1)$: for any $\delta>0$ we can choose $M>\max\{(CM_0/\delta)^{q+1}, s_0\}$ such
that
$$
0\leq\int_{\{u\geq M\}} \left[\frac{1}{|x|^\mu}\ast
F(u)\right]F(u)dx< \delta.
$$
From \eqref{bound} and $(f_1)(ii)$ we also have
$$
0\leq\int_{\{u_n\geq M\}}\left[\frac{1}{|x|^\mu}\ast
F(u_n)\right]F(u_n)dx\leq \frac{M_0}{M^{q+1}}\int_{\{u_n\geq
M\}}\left[\frac{1}{|x|^\mu}\ast F(u_n)\right]u_nf(u_n)dx< \delta,
$$
then we obtain
\begin{multline*}
\left|\int_{\Omega}\left[\frac{1}{|x|^\mu}\ast
F(u_n)\right]F(u_n)dx-\int_{\Omega}\left[\frac{1}{|x|^\mu}\ast
F(u)\right]F(u)dx\right|\leq \\2\delta+ \left|\int_{\Omega \cap
\{u_n\leq M\}}\left[\frac{1}{|x|^\mu}\ast
F(u_n)\right]F(u_n)dx-\int_{\Omega \cap \{u\leq
M\}}\left[\frac{1}{|x|^\mu}\ast F(u)\right]F(u)dx\right|.
\end{multline*}
 It remains then to prove that
\begin{equation}\label{equivts}
\int_{|u_n|\leq M}\left[\frac{1}{|x|^\mu}\ast
F(u_n)\right]F(u_n)\chi_{\Omega}dx\to \int_{|u|\leq
M}\left[\frac{1}{|x|^\mu}\ast F(u)\right]F(u)\chi_{\Omega}dx
\end{equation}
as $n\to +\infty$, for any fixed $M>\max\{(CM_0/\delta)^{q+1}, s_0\}$ . Let
us observe that as $K\to +\infty$
$$
\int_{|u|\leq M}\int_{|u|\leq K}\left[\frac{F(u(y))}{|x-y|^\mu}
\right]dyF (u(x))\chi_{\Omega}(x)dx \to \int_{|u|\leq
M}\left[\frac{1}{|x|^\mu}\ast F(u)\right]dy F(u)\chi_{\Omega}dx.
$$
Let  $C$  be the constant appearing in \eqref{bound} , and choose
$K\geq \max\{(CM_0/\delta)^{q+1}, s_0\}$ such that
$$
\int_{|u|\leq M}\int_{|u|\geq K}\left[\frac{F(u(y))}{|x-y|^\mu}
\right]dy F(u(x))dx\leq \delta.
$$
By $(f_1)(ii)$ one has
$$
\aligned
\int_{|u_n|\leq M}&\int_{|u_n|\geq
K}\left[\frac{F(u_n(y))}{|x-y|^\mu}
\right]F(u_n(x))\chi_{\Omega}(x)dx\\
&\leq \frac{1}{K^{q+1}}
\int_{|u_n|\leq M}\int_{|u_n|\geq K}\left[\frac{u_n^{q+1}
F(u_n)}{|x-y|^\mu}
\right]dy F(u_n)\chi_{\Omega}dx\\
&\leq \frac{M_0}{K^{q+1}} \int_{|u_n|\leq M}\int_{|u_n|\geq
K}\left[\frac{u_n f(u_n)}{|x-y|^\mu} \right]dy
F(u_n)\chi_{\Omega}dx \\
&\leq \frac{M_0}{K^{q+1}} \int_{|u_n|\leq M}\int_{|u_n|\geq
K}\left[\frac{u_n f(u_n)}{|x-y|^\mu} \right]dy F(u_n)dx\\
&=\frac{M_0}{K^{q+1}} \int_{\mathbb R^2}\int_{\mathbb
R^2}\left[\frac{F(u_n)}{|x-y|^\mu} \right]dy u_nf(u_n)dx\\
&\leq \delta,
\endaligned
$$
then we can see that
$$
\left|\int_{|u|\leq M}\int_{|u|\geq K}\left[\frac{F(u)}{|x-y|^\mu}
\right]dyF(u)\chi_{\Omega}-\int_{|u_n|\leq M}\int_{|u_n|\geq
K}\left[\frac{F(u_n)}{|x-y|^\mu} \right]dyF(u_n)\chi_{\Omega}
\right|\leq 2\delta.
$$
In order to prove \eqref{equivts} it remains to verify that as $n\to +\infty$ there holds
$$
\left|\int_{|u|\leq M}\int_{|u|\leq K}\left[\frac{F(u)}{|x-y|^\mu}
\right]dyF(u)\chi_{\Omega}-\int_{|u_n|\leq M}\int_{|u_n|\leq
K}\left[\frac{F(u_n)}{|x-y|^\mu} \right]dyF(u_n)\chi_{\Omega}
\right|\to 0
$$
for any fixed $K, M>0$. This is a consequence of
the Lebesgue's dominated convergence theorem: indeed,
$$\int_{|u_n|\leq K}\left[\frac{F(u_n)}{|x-y|^\mu}
\right]dy F(u_n)\chi_{\{\Omega\cap |u_n|\leq M\}}\to
\int_{|u|\leq K}\left[\frac{F(u)}{|x-y|^\mu}
\right]dy F(u)\chi_{\{\Omega\cap |u|\leq M\}} \quad \hbox{ a.e.}
$$
and by $(f_2)$ we know there exists a constant $C_{M,K}$ depends of $M,K$ such that
\begin{multline*}
\int_{|u_n|\leq K}\left[\frac{F(u_n)}{|x-y|^\mu} \right]dy
F(u_n)\chi_{\{\Omega\cap |u_n|\leq M\}}\\
\leq C_{M,K}
\int_{|u_n|\leq K}\left[\frac{u_n^{p+1}}{|x-y|^\mu}
\right]dy u_n^{p+1}\chi_{\{\Omega\cap |u_n|\leq M\}}\\
\leq  C_{M,K} \int_{\mathbb R^2}\left[\frac{1}{|x|^\mu}\ast
u_n^{p+1}\right] u_n^{p+1}\chi_{\Omega}
\to C_{M,K} \int_{\mathbb R^2} \left[\frac{1}{|x|^\mu}\ast
u^{p+1}\right] u^{p+1}\chi_{\Omega}
\end{multline*}
as $n\to\infty$, applying the Hardy-Sobolev-Littlewood
inequality, since $u_n\to u$ in $L^s_{loc}$ for all $s\geq 1 $. Hence the proof of \eqref{convFF} is now complete.

Let us now prove that the weak limit $u$ yields actually a weak solution to
\eqref{A}, namely that
\begin{equation}\label{weaklim}
\int_{\mathbb R^2}\nabla u\nabla
\varphi +W(x)u\varphi-\left[\frac{1}{|x|^{\mu}}\ast F(u)\right]f(u)\varphi=0
\end{equation}
for all $\varphi \in \mathcal C^{\infty}_c(\mathbb R^2)$. Since $\{u_n\}$ is a
$(PS)_{m_V}$ sequence, for all $\varphi \in \mathcal C^{\infty}_c(\mathbb R^2)$, we know that
$$
\int_{\mathbb R^2}\nabla u_n\nabla \varphi
+W(x)u_n\varphi-\left[\frac{1}{|x|^{\mu}}\ast
F(u_n)\right]f(u_n)\varphi\to 0,
$$
 as $n\to\infty$. Since $u_n\rightharpoonup u$ in $E$, we just need to prove that, as $n\to\infty$
\begin{equation}\label{weak*conv}
\int_{\mathbb R^2}\left[\frac{1}{|x|^{\mu}}\ast
F(u_n)\right]f(u_n)\varphi \rightarrow \int_{\mathbb
R^2}\left[\frac{1}{|x|^{\mu}}\ast F(u)\right]f(u)\varphi
\end{equation}
for all $\varphi \in C^{\infty}_c(\mathbb R^2)$.

Let $\Omega$ be
any compact subset of $\mathbb R^2$, we claim that there exists  $C(\Omega)$ such that
\begin{equation}\label{boundbis}
   \int_{\Omega}\left[\frac{1}{|x|^\mu}\ast
F(u_n)\right]\frac{f(u_n)}{1+u_n}dx\leq C(\Omega).
\end{equation}
 In fact,
let
$$
v_n=\frac{\varphi}{1 +u_ n},
$$
where $\varphi$ is a smooth function compactly supported in $\Omega'\supset \Omega$, $\Omega'$ compact,
such that $0\leq\varphi\leq 1$ and $\varphi \equiv 1$ in $\Omega$. Direct computation shows that
$$
\aligned
\|v_n\|_W^2&=\int_{\mathbb R^2}\left|\nabla
v_n\right|^2+W(x)v_n^2\\
&=\int_{\mathbb R^2}\left|\frac{\nabla
\varphi}{1+u_n}-\varphi\frac{\nabla
u_n}{(1+u_n)^2}\right|^2+W\frac{\varphi ^2}{(1+u_n)^2}\\
&\leq
\int_{\mathbb R^2}\frac{|\nabla
\varphi|^2}{(1+u_n)^2}+2\frac{\nabla \varphi \nabla
u_n}{1+u_n}+\varphi^2\frac{|\nabla u_n|^2}{(1+u_n)^4}+W\varphi
^2\\
&\leq 2\|\varphi\|_W^2 + 2\|u_n\|_W^2,
\endaligned
$$
which means that $v_n\in E$. Choose $v_n$ as test function in \eqref{convPhi'}, then
$$
\aligned
 \int_{\Omega}\left[\frac{1}{|x|^\mu}\ast
F(u_n)\right]\frac{f(u_n)}{1+u_n}dx&\leq\int_{\mathbb
R^2}\left[\frac{1}{|x|^\mu}\ast
F(u_n)\right]f(u_n)\frac{\varphi}{1+u_n}\\
&\leq \int_{\mathbb
R^2}|\nabla u_n|^2 \frac{\varphi}{(1+u_n)^2}+\frac{\nabla u_n\nabla\varphi}{1+u_n}+Wu_n\frac{\varphi}{1+u_n}+ \tau_n\|v_n\|_W\\
&\leq \int_{\mathbb R^2}|\nabla
u_n|^2 \frac{\varphi}{(1+u_n)^2}+\frac{\nabla
u_n\nabla\varphi}{1+u_n}+Wu_n\frac{\varphi}{1+u_n}+
2\tau_n\|u_n\|_W+2\tau_n\|\varphi\|_W\\
&\leq \|\nabla u_n\|_2^2 +C_{\varphi}\|\nabla
u_n\|_2+\int_{\Omega'}Wu_n+2\tau_n\|u_n\|_W+2\tau_n\|\varphi\|_W.
\endaligned
$$
Since $W(x)$ is bounded,
$u_n$ is bounded in $H^1$ and $u_n\to u $ in $L^1(\Omega')$  we
easily deduce \eqref{boundbis}.

Now define
$$\xi_n:=\left[\frac{1}{|x|^{\mu}}\ast F(u_n)\right]f(u_n),$$
we can observe that
$$
\aligned
\int_{\Omega}&\left[\frac{1}{|x|^\mu}\ast F(u_n)\right]f(u_n)dx\\
&\leq
2\int_{\{u_n<1\}\cap \Omega}\left[\frac{1}{|x|^\mu}\ast
F(u_n)\right]\frac{f(u_n)}{1+u_n}dx+\int_{\{u_n>1\}\cap
\Omega}\left[\frac{1}{|x|^\mu}\ast
F(u_n)\right]u_nf(u_n)dx\\
&\leq 2\int_{\Omega}\left[\frac{1}{|x|^\mu}\ast
F(u_n)\right]\frac{f(u_n)}{1+u_n}dx+\int_{\mathbb
R^2}\left[\frac{1}{|x|^\mu}\ast F(u_n)\right]u_nf(u_n)dx.
\endaligned
$$
Combining \eqref{boundbis} and \eqref{bound}, it is easy to see that $\xi_n$ is uniformly bounded in
$L^1(\Omega)$ with
$$
\int_{\Omega}\left[\frac{1}{|x|^\mu}\ast F(u_n)\right]f(u_n)dx\leq 2C(\Omega)+C.
$$
Finally, consider the sequence of measures $\mu_n$ with density $\xi_n=\left[\frac{1}{|x|^{\mu}}\ast
F(u_n)\right]f(u_n)$, that is
$$
\mu_n(E):=\int_E\xi_n \,dx = \int_E \left[\frac{1}{|x|^{\mu}}\ast
F(u_n)\right]f(u_n) \,dx \quad \hbox{ for any measurable }
E\subset \Omega
$$
Since $\|\xi_n\|_1\leq C(\Omega)$ and $\Omega$ is bounded, the
measures $\mu_n$ have uniformly bounded total variation. Then, by
weak$^\ast$-compactness, up to a subsequence, $\mu_n
\rightharpoonup^\ast \mu$ for some measure $\mu$,
$$
\lim_{n\to\infty} \int_{\Omega} \xi_n \varphi\,dx=\lim_{n\to\infty}  \int_{\Omega}
\left[\frac{1}{|x|^{\mu}}\ast F(u_n)\right]f(u_n)\varphi \,dx
=\int_{\Omega}\varphi d\mu, \quad \forall\: \varphi \in C^{\infty}_c(\Omega).
$$
Now recall that $u_n$ is a (PS) sequence, so that in particular
\eqref{convPhi'} holds and hence
$$
\lim_{n\to\infty} \int_{\mathbb R^2}\nabla u_n\nabla \varphi
+W(x)u_n\varphi=\int_{\Omega}
\varphi d\mu ,\quad \forall\: \varphi \in C^{\infty}_c(\Omega),
$$
which implies that $\mu$ is absolutely
continuous with respect to the Lebesgue measure. Then, by the
Radon-Nicodym theorem, there exists a function $\xi \in
L^1(\Omega)$ such that
$$
\int_{\Omega} \varphi d\mu=\int_{\Omega} \varphi \xi dx, \quad
\forall\:  \varphi \in C^{\infty}_c(\Omega).
$$
Since this holds for any compact set $\Omega\subset \mathbb R^2$, we have that there exists a function $\xi \in
L^1_{loc}(\mathbb R^2)$ such that
$$
\int_{\mathbb R^2} \varphi d\,\mu=\lim_n\int_{\mathbb R^2}
\left[\frac{1}{|x|^{\mu}}\ast F(u_n)\right]f(u_n)\varphi
\,dx=\int_{\mathbb R^2} \varphi \xi d\,x, \quad \forall\:  \varphi \in C^{\infty}_c(\mathbb R^2),
$$
where $\xi=\left[\frac{1}{|x|^{\mu}}\ast F(u)\right]f(u)$ and the proof is complete.
%


\end{proof}
\noindent
{\bf Proof of Theorem \ref{thm-Existence}.}
As proved in \cite[Lemma 2.1]{AY2}, the functional $\Phi_W$
satisfies the Mountain Pass geometry, then there exists a
$(PS)_{m_W}$ sequence $\{u_n\}$. By Lemma \ref{lem-PS}, up to a
subsequence, $\{u_n\}$ weakly converges to a weak solution $u$ of
\eqref{A}: it remains only to prove that $u$ is non-trivial. Let
us suppose by contradiction that $u\equiv 0$. Since $\{u_n\}$ is
bounded, we have either $\{u_n\}$ is vanishing, that is, for any
$r>0$
$$
\lim_{n\to +\infty}\sup_{y\in \mathbb R^2}\int_{B_r(y)}|u_n|^2=0
$$
or it is non-vanishing, i.e. there exist $r, \delta >0$ and a
sequence $\{y_n\}\subset \mathbb Z^2$ such that
$$
\lim_{n\to \infty}\int_{B_r(y_n)}|u_n|^2\geq \delta
$$
If $\{u_n\}$ is vanishing, by Lions' concentration-compactness result we have
\begin{equation}\label{Lions}
u_n\to 0 \quad \mbox{in} \quad  L^s(\mathbb R^2) \quad \forall \, s>2,
\end{equation}
as $n\to\infty$. In this case we claim that
\begin{equation}\label{conv0}
\left[\frac{1}{|x|^{\mu}}\ast F(u_n)\right]F(u_n)\to0
\quad \hbox{ in } L^1(\mathbb R^2),
\end{equation}
as $n\to\infty$. In fact, we need only to repeat the proof of \eqref{convFF} in Lemma
\ref{lem-PS} without restricting necessarily to compact sets. Apply the Hardy-Sobolev-Littewood inequality we notice that
$$
\left|\int_{\mathbb R^2} \left[\frac{1}{|x|^\mu}\ast
u_n^{p+1}\right] u_n^{p+1}
\right| \leq C|u_n|^{2(p+1)}_{\frac{4}{4-\mu}(p+1)}\to 0
$$
as $n\to\infty$,  since $\frac{4}{4-\mu}(p+1)>2$ and \eqref{Lions} holds. Since
$\{u_n\}$ is a $(PS)_{m_W}$ sequence with $m_W<
\frac{4-\mu}{8},$ it follows that
$$
\lim_{n\to +\infty}\|u_n\|_W^2=2m_W<\frac{4-\mu}{4}
$$
Then there exist a sufficiently small $\delta>0$ and $K>0$ such that
\begin{equation}\label{conv-norm}
\|u_n\|_W^2\leq \frac{4-\mu}{4}(1-\delta), \qquad \forall\, n>K.
\end{equation}
Using again the Hardy-Sobolev-Littewood inequality we have
$$
\left|\int_{\mathbb R^2}\left[\frac{1}{|x|^{\mu}}\ast
F(u_n)\right]f(u_n)u_n\right|\leq C|F(u_n)|_{\frac{4}{4-\mu}}
|f(u_n)u_n|_{\frac{4}{4-\mu}}.
$$
Combining $(f_1)$ with $(f_2)$, for any $\vr>0$, $p> 1$ and $\beta>1$, there exists $C(\vr,p, \beta)>0$ such that
$$
|f(s)|\leq \vr |s|^{\frac{2-\mu}{2}}+C(\vr,p, \beta) |s|^{p-1}\big[ e^{ \beta4\pi s ^{2}}
-1 \big] \,\,\, \forall s\in \R.
$$
Then,
$$
|f(u_n)u_n|_{\frac{4}{4-\mu}}\leq \vr |u_n|_2^{\frac{4-\mu}{2}}+C(\vr,p, \beta)|u_n|_{\frac{4p t'}{4-\mu}}^{\frac{4-\mu}{4t'}}\big(\int_{\R^2}[e^{( \frac{4\beta t}{4-\mu}\|u_n\|^2_W 4\pi \frac{u_n ^{2}}{\|u_n\|^2_W})}
-1]\big)^{\frac{4-\mu}{4t}}
$$
where $t, t'>1$ satisfying $\frac{1}{t}+\frac{1}{t'}=1$. In order to conclude by means of  \cite{O} by do \'{O} and Adachi-Tanaka inequality \cite{AT}
it is enough to
choose $\beta, t>1$ close to $1$ such that $\frac{4\beta t}{4-\mu}\|u_n\|^2_W<1$, namely
$$
1< \beta t < \frac 1{1-\delta},
$$
we deduce that
$$
\big(\int_{\R^2}[e^{( \frac{4\beta t}{4-\mu}\|u_n\|^2_W 4\pi \frac{u_n ^{2}}{\|u_n\|^2_W})}
-1]\big)^{\frac{4-\mu}{4t}}\leq \big(\int_{\R^2}[e^{( \frac{4\beta m t}{4-\mu} 4\pi \frac{u_n ^{2}}{\|u_n\|^2_W})}
-1]\big)^{\frac{4-\mu}{4t}} \leq C_1 \,\,\,\, \forall n>K,
$$
for some $C_1>0$. Then,
$$
\aligned
\Big|\int_{\mathbb R^2}\left[\frac{1}{|x|^{\mu}}\ast
F(u_n)\right]f(u_n)u_n \Big|&\leq \vr^2 |u_n|_2^{{4-\mu}}+C_2|u_n|_{\frac{4p t'}{4-\mu}}^{\frac{4-\mu}{2t'}}.
\endaligned
$$
Since $t>1$ is close to $1$, we have that $\frac{4p t'}{4-\mu}>2$. By \eqref{Lions}, we have
$$
\left|\int_{\mathbb R^2}\left[\frac{1}{|x|^{\mu}}\ast
F(u_n)\right]f(u_n)u_n\right| \to 0
$$
as $n\to\infty$. Recalling that $\{u_n\}$ is a $(PS)_{m_W}$ sequence, $u_n
\to 0$ in $E$, and so
 $\Phi_W(u_n)\to 0$
which implies $m_W=0$, which is a contradiction. Therefore the
vanishing case dose not hold.\\

\noindent Let us now consider the non vanishing case and define
$v_n:=u_n(\cdot-y_n)$, then
\begin{equation}\label{nonvan}
\int_{B_r(0)}|v_n|^2\geq \delta
\end{equation}
By the periodicity assumption, $\Phi_W$ and $\Phi_{W'}$ are both
invariant by $\mathbb Z^2$ translations, so that $\{v_n\}$ is
again a $(PS)_{m_W}$ sequence. Then $v_n\rightharpoonup v$ in $E$,
with $v\neq 0$ by
using \eqref{nonvan}, since $v_n\to v $ in $L^2_{loc}(\mathbb R^2)$ . Thereby, $v$ is a nontrivial critical point of $\Phi_W$ and $\Phi_W(v)=m_W$, which completes the proof of the theorem.

\section{Semiclassical states for the nonlocal Schr\"{o}dinger equation}

Performing the scaling $u(x)=v(\epsilon x)$ one easily sees that problem \eqref{EC} is equivalent to
$$
\begin{array}{l}
\displaystyle -\Delta u +V(\varepsilon x)u  =\Big[\frac{1}{|x|^{\mu}}\ast F(u)\Big]f(u).
\end{array}
\eqno{(SNS^*)}
$$

For $\vr>0$, we define the following Hilbert space
 $$
E_{\vr}=\Big\{u\in E:   \int_{\R^2}V(\vr x)|u|^2<\infty\Big\}
$$
endowed with the norm
 $$
 \|u\|_{\vr}:=\left(\int_{\R^2}\big(|\nabla u|^2+ V(\vr x)|u|^2\big)\right)^{1/2}.
 $$

The energy functional associated to equation $(SNS^*)$ is given by
$$
\aligned
I_{\vr}(u)=\frac12\|u\|_{\varepsilon}^2-\mathfrak{F}(u)
\endaligned
$$
and
$$
\langle I_{\vr}'(u), \varphi\rangle=\int_{\mathbb{R}^2} (\nabla u\nabla \varphi+ V(\vr x) u\varphi)-\mathfrak{F}'(u)[\varphi], \,\,\, \forall u, \varphi \in E.
$$

\noindent Let ${\cal{N}_\vr}$ be the Nehari manifold associated to $I_{\vr}$, that is,
$$
{\cal{N}_\vr}=\Big\{u\in E_\vr:u\neq0, \langle I_{\vr}'(u),u\rangle=0\Big\}.
$$
The following Lemma tells that the Nehari manifold ${\cal{N}_\vr}$ is bounded away from $0$.
\begin{lem}\label{LN}
Suppose that conditions $(f_1)-(f_3)$ hold. Then there exists $\alpha>0$, independent of $\vr$, such that
\begin{equation} \label{alpha2}
\|u\|_{\vr}\geq \alpha, \,\,\, \forall u\in \cal{N}_\vr.
\end{equation}
\end{lem}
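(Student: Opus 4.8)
The plan is to use $u$ itself as a test function in the Nehari identity $\langle I_{\vr}'(u),u\rangle=0$ and then bound the nonlocal term from above by a small multiple of a power of $\|u\|_{\vr}$. Explicitly, for $u\in\cal{N}_\vr$ we have
$$
\|u\|_{\vr}^2=\int_{\R^2}\Big[\frac{1}{|x|^{\mu}}\ast F(u)\Big]f(u)u.
$$
First I would record the pointwise growth estimates coming from $(f_1)$--$(f_2)$: since $f(s)\sim C_ps^p$ as $s\to0$ with $p>\frac{2-\mu}{2}$, and $f(s)\le Ce^{4\pi s^2}$ for $s\ge0$, for every $\vr'>0$ and every $q>1$, $\beta>1$ there is $C=C(\vr',q,\beta)$ so that
$$
f(s)\le \vr'|s|^{\frac{2-\mu}{2}}+C|s|^{q-1}\big(e^{4\pi\beta s^2}-1\big),\qquad
F(s)\le \vr'|s|^{\frac{4-\mu}{2}}+C|s|^{q}\big(e^{4\pi\beta s^2}-1\big),
$$
for all $s\ge0$ (the second follows from the first by integration, after possibly enlarging $C$). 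Consequently $f(u)u$ and $F(u)$ both lie in $L^{\frac{4}{4-\mu}}(\R^2)$, which is exactly the space required by the Hardy--Littlewood--Sobolev inequality (Proposition \ref{HLS}) with $s=r=\frac{4}{4-\mu}$, $N=2$.

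Next, applying Proposition \ref{HLS} gives
$$
\int_{\R^2}\Big[\frac{1}{|x|^{\mu}}\ast F(u)\Big]f(u)u\le C(\mu)\,|F(u)|_{\frac{4}{4-\mu}}\,|f(u)u|_{\frac{4}{4-\mu}}.
$$
I would then estimate each factor separately. Using the displayed growth bounds, Hölder's inequality, the Sobolev embedding $H^1(\R^2)\hookrightarrow L^t(\R^2)$ for all $t\ge2$, and — for the exponential terms — the Trudinger--Moser inequality of Lemma \ref{Trudinger-Moser} applied to $u/\|u\|_{\vr}$, one gets, whenever $\|u\|_{\vr}\le 1$ (so that $4\pi\beta t'\|u\|_{\vr}^2<4\pi$ for $\beta,t'$ chosen close to $1$), a bound of the shape
$$
|F(u)|_{\frac{4}{4-\mu}}\le C_1\|u\|_{\vr}^{\frac{4-\mu}{2}},\qquad
|f(u)u|_{\frac{4}{4-\mu}}\le C_2\|u\|_{\vr}^{\frac{4-\mu}{2}},
$$
with $C_1,C_2$ independent of $\vr$ (this uses $(V_1)$, i.e. $\|u\|_{\vr}^2\ge V_0|u|_2^2$, to control lower-order $L^2$ norms by $\|u\|_{\vr}$ uniformly in $\vr$). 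Combining,
$$
\|u\|_{\vr}^2\le C_3\,\|u\|_{\vr}^{4-\mu}.
$$
Since $0<\mu<2$ we have $4-\mu>2$, so for $u\neq0$ this forces $\|u\|_{\vr}^{2-\mu}\ge 1/C_3$, i.e. $\|u\|_{\vr}\ge (1/C_3)^{\frac{1}{2-\mu}}=:\alpha>0$, and $\alpha$ depends only on $\mu$, $V_0$ and the constants in $(f_1)$--$(f_2)$, not on $\vr$. (If it turns out that $\alpha\ge1$ the a priori restriction $\|u\|_{\vr}\le1$ is vacuous; otherwise the argument shows no element of $\cal{N}_\vr$ can have norm below $\alpha\le1$, which is all we need.)

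The main obstacle is keeping every constant genuinely independent of $\vr$: the exponential integrability estimate must be applied after normalizing by $\|u\|_{\vr}$, and the potential term $V(\vr x)$ must be handled using only the uniform lower bound $V_0$ from $(V_1)$ rather than any $\vr$-dependent quantity; once the exponent condition $4\pi\beta t'\|u\|_{\vr}^2<4\pi$ is secured by the smallness assumption $\|u\|_{\vr}\le1$ and a suitable choice of $\beta,t'>1$, the constant $C(M,\alpha)$ in Lemma \ref{Trudinger-Moser} depends only on the fixed bound $M$ for $|u/\|u\|_{\vr}|_2$ (again controlled via $V_0$) and on the fixed exponent, so it is uniform in $\vr$. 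The rest is the routine Hölder/Sobolev bookkeeping sketched above.
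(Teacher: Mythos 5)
Your proposal is correct and follows essentially the same route as the paper: start from the Nehari identity $\|u\|_{\vr}^2=\mathfrak{F}'(u)[u]$, split $F$ and $f(u)u$ into a low-order power plus an exponential remainder via $(f_1)$–$(f_2)$ (the paper routes this slightly differently through $(f_3)$, writing $F(s)<\frac{1}{K}f(s)s$ first, but the resulting estimate is the same), apply Hardy--Littlewood--Sobolev, control the exponential factor with the Trudinger--Moser inequality after normalizing by $\|u\|_{\vr}$ under a smallness restriction on $\|u\|_{\vr}$, and conclude since both resulting powers of $\|u\|_{\vr}$ exceed $2$. The only cosmetic difference is that the paper retains the two-term bound $\|u\|_{\vr}^2\le \delta^2 C\|u\|_{\vr}^{4-\mu}+C\|u\|_{\vr}^{2p}$ rather than absorbing the second term into the first by choosing the auxiliary exponent $q\ge\frac{4-\mu}{2}$, as you do.
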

\begin{proof}
 For any $\delta>0$, $p>1$ and $\beta>1$, there exists $C_\delta>0$ such that
$$
F(s)<\frac{1}{K}f(s)s\leq\delta s^{\frac{4-\mu}{2}}+C(\delta,p, \beta) s^{p}\big[ e^{ \beta4\pi s ^{2}}
-1 \big], \forall s\in \R,
$$
it follows
\begin{equation} \label{mp1}
|F(u)|_{\frac{4}{4-\mu}}\leq C|f(u)u|_{\frac{4}{4-\mu}}\leq \delta C |u|^{\frac{4-\mu}{2}}_2+C(\delta, p, \beta)\big| u^{p}\big[ e^{ \beta4\pi u ^{2}}
-1 \big]\big|_{\frac{4}{4-\mu}}.
\end{equation}
Since the imbedding $E_{\vr}\hookrightarrow L^p(\R^2)$ is continuous for any $p\in (2,+\infty)$, we know there exists a constant $C_1$ such that
$$\aligned
\int_{\R^2}|u|^{\frac{4p}{4-\mu}}\big[ e^{\beta4\pi u ^{2}}
-1 \big]^\frac{4}{4-\mu}&\leq (\int_{\R^2}|u|^{\frac{8p}{4-\mu}})^{\frac12}(\int_{\R^2}\big[ e^{ \beta4\pi u ^{2}}
-1 \big]^\frac{4}{4-\mu})^{\frac12}\\
&\leq C_1\|u\|_{\vr}^{\frac{4p}{4-\mu}}\big(\int_{\R^2}\big[ e^{(\frac{4\beta}{4-\mu}4\pi u ^{2})}
-1 \big]\big)^{\frac12}.
\endaligned
$$
Notice that
$$
\int_{\R^2}\big[ e^{(\frac{4\beta}{4-\mu}4\pi u ^{2})}
-1 \big]=\int_{\R^2}\big[ e^{(\frac{4\beta}{4-\mu}\|u\|_{\vr}^2 4\pi \frac{u ^{2}}{\|u\|_{\vr}^2})}
-1 \big],
$$
then, fixing $\xi \in (0,1)$ and making $\frac{4\beta}{4-\mu}\|u\|_{\vr}^2=\xi< 1$, Lemma \ref{Trudinger-Moser} implies that there exists a constant $C_2$ such that
$$
\int_{\R^2}\big[ e^{(\xi 4\pi \frac{u ^{2}}{\|u\|_{\vr}^2})}
-1 \big]\leq C_2.
$$
thus, by \eqref{mp1}, we know there exists $C_3$ such that
$$
|F(u)|_{\frac{4}{4-\mu}}\leq \delta \|u\|_{\vr}^{\frac{4-\mu}{2}}+C_3\|u\|_{\vr}^{p}.
$$
By Hardy-Littlewood-Sobolev inequality, if $\|u\|_{\vr}^2 = \frac{\xi (4-\mu)}{4\beta}$, there holds
$$
\mathfrak{F}'(u)[u]\leq \delta^2 C_4 \|u\|_{\vr}^{4-\mu}+C_4\|u\|_{\vr}^{2p}.
$$

Since $u\in {\cal N}_{\vr}$, there holds
$$
\|u\|_{\vr}^2=\mathfrak{F}'(u)[u],
$$
and so
$$
\|u\|_{\vr}^{2}\leq \delta^2 C_5\|u\|_{\vr}^{4-\mu}+C_5\|u\|_{\vr}^{2p},
$$
then the conclusion follows immediately.
\end{proof}

\noindent Next we show that the functional $I_{\vr}$ satisfies the Mountain Pass Geometry.
\begin{lem}\label{mountain:1}
Suppose that conditions $(f_1)-(f_3)$ hold, then
\begin{itemize}
  \item[$(i)$] There exist $\rho, \delta_0>0$ such that $I_{\vr}|_S\geq\delta_0>0$ for all $u\in S=\{u\in E_{\vr}:\|u\|_{\vr}=\rho\}$;
  \item[$(ii)$]There is $e$ with $\|e\|_{\vr}>\rho$ such that $I_{\vr}(e)< 0$.
\end{itemize}
\end{lem}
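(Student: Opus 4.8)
The strategy is to verify the two conditions of the Mountain Pass geometry one at a time: $(i)$ will follow from the Hardy--Littlewood--Sobolev inequality combined with the growth estimates already obtained in the proof of Lemma~\ref{LN}, while $(ii)$ will follow by testing $I_\vr$ along a ray and exploiting the superquadratic behaviour of $F$ encoded in $(f_3)$. To prove $(i)$, apply Proposition~\ref{HLS} with $s=r=\frac{4}{4-\mu}$ (admissible since $N=2$ and $0<\mu<2$) to get $\mathfrak F(u)\le C\,|F(u)|_{\frac{4}{4-\mu}}^{2}$. Fix $\beta>1$ and $\xi\in(0,1)$. Arguing exactly as in the proof of Lemma~\ref{LN} --- that is, using $(f_1)$--$(f_3)$ to write $F(s)\le\frac1K f(s)s\le \delta\, s^{\frac{4-\mu}{2}}+C_\delta\, s^{\tau}\big(e^{\beta4\pi s^{2}}-1\big)$ for a large fixed $\tau$, then H\"older's inequality, the continuous embeddings $E_\vr\hookrightarrow L^q(\R^2)$ for $q\ge2$, and Lemma~\ref{Trudinger-Moser} --- one obtains, for every $u$ with $\frac{4\beta}{4-\mu}\|u\|_\vr^{2}\le\xi$,
\[
|F(u)|_{\frac{4}{4-\mu}}\le \delta\,C_1\|u\|_\vr^{\frac{4-\mu}{2}}+C_2\|u\|_\vr^{\tau},
\qquad\hbox{so}\qquad
\mathfrak F(u)\le \delta^{2}C_3\|u\|_\vr^{4-\mu}+C_4\|u\|_\vr^{2\tau},
\]
where both exponents $4-\mu$ and $2\tau$ are strictly greater than $2$. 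Hence, for $u\in S=\{u\in E_\vr:\|u\|_\vr=\rho\}$,
\[
I_\vr(u)=\tfrac12\|u\|_\vr^{2}-\mathfrak F(u)\ge\Big(\tfrac12-\delta^{2}C_3\rho^{2-\mu}-C_4\rho^{2\tau-2}\Big)\rho^{2},
\]
and choosing first $\delta$ small and then $\rho>0$ small (keeping $\frac{4\beta}{4-\mu}\rho^{2}<\xi$) makes the bracket positive, giving $I_\vr|_S\ge\delta_0>0$.

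To prove $(ii)$, fix $\psi\in C_c^\infty(\R^2)$ with $\psi\ge0$ and $\psi\not\equiv0$, so that $\psi\in E_\vr$. From $(f_3)$, $(\log F)'(s)=f(s)/F(s)>K/s$ for $s>0$, and integrating from a fixed $s_0>0$ gives $F(s)\ge c_0 s^{K}$ for $s\ge s_0$, with $c_0>0$ and $K>1$. Therefore, setting $\Omega_t=\{x:t\psi(x)>s_0\}$,
\[
\int_{\R^2}\Big[\frac{1}{|x|^\mu}\ast F(t\psi)\Big]F(t\psi)\ \ge\ c_0^{2}\,t^{2K}\iint_{\Omega_t\times\Omega_t}\frac{\psi(x)^{K}\psi(y)^{K}}{|x-y|^{\mu}}\,dx\,dy,
\]
and since $\Omega_t\uparrow\{\psi>0\}$ as $t\to\infty$ and $\psi^{K}\in L^{\frac{4}{4-\mu}}(\R^2)$, the double integral on the right tends, by Proposition~\ref{HLS}, to a finite positive number $A$; in particular it is $\ge A/2$ for $t$ large. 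Consequently
\[
I_\vr(t\psi)\le\frac{t^{2}}{2}\|\psi\|_\vr^{2}-\frac{c_0^{2}A}{4}\,t^{2K}\longrightarrow-\infty\qquad(t\to\infty),
\]
because $2K>2$. Picking $t_*$ so large that $I_\vr(t_*\psi)<0$ and $\|t_*\psi\|_\vr=t_*\|\psi\|_\vr>\rho$, the point $e:=t_*\psi$ satisfies $(ii)$.

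The routine part is $(ii)$; the one genuinely delicate point, and the place where the critical exponential growth enters, is the control of the Trudinger--Moser contribution in $(i)$: one must take $\rho$ below the threshold $\frac{4\beta}{4-\mu}\rho^{2}<\xi<1$ so that Lemma~\ref{Trudinger-Moser} bounds the exponential integral uniformly for $u\in S$. Once that is respected, the estimate on $\mathfrak F$ reduces to H\"older's inequality and the subcritical Sobolev embeddings, and the fact that the two exponents $4-\mu$ and $2\tau$ exceed $2$ does the rest.
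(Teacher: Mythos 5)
Your proposal is correct, and for part $(i)$ it follows the same route as the paper, which simply delegates to the estimates already carried out in the proof of Lemma~\ref{LN}: use $(f_1)$--$(f_3)$ to split $F$ into a power term plus a power times a Trudinger--Moser factor, apply Hardy--Littlewood--Sobolev and H\"older, and keep $\rho$ below the threshold $\frac{4\beta}{4-\mu}\rho^2<\xi<1$ so that Lemma~\ref{Trudinger-Moser} bounds the exponential integral uniformly on the sphere. Since both of your resulting exponents $4-\mu$ and $2\tau$ exceed $2$, the conclusion follows exactly as you write.

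For part $(ii)$ you take a genuinely different route. The paper works directly at the level of the nonlocal functional: setting $w(t)=\mathfrak F(tu_0/\|u_0\|_\vr)$ for some $u_0$ with nontrivial positive part, it observes that $(f_3)$ gives $w'(t)/w(t)\geq 2K/t$, integrates this logarithmic-derivative inequality, and obtains $\mathfrak F(su_0)\geq \mathfrak F(u_0/\|u_0\|_\vr)\|u_0\|_\vr^{2K}s^{2K}$, whence $I_\vr(su_0)\leq C_1 s^2-C_2 s^{2K}\to-\infty$. You instead integrate $(f_3)$ at the pointwise level to get $F(s)\geq c_0 s^K$ for $s\geq s_0$, then bound the double integral $\mathfrak F(t\psi)$ from below over the truncated set $\Omega_t=\{t\psi>s_0\}$, and invoke monotone convergence to see that the truncated double integral stabilizes to a positive constant $A$. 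Both arguments are correct and rest on the same inequality $sf(s)>KF(s)$; the paper's is more compact because integrating at the functional level automatically produces the exponent $2K$ (the convolution structure doubles the growth rate for free) and sidesteps the need to track the level sets $\Omega_t$. Your version is more elementary and makes the role of $(f_3)$ at the pointwise level explicit, at the small cost of a limiting argument for the double integral — where, note, Proposition~\ref{HLS} supplies the \emph{finiteness} of $A$ but the \emph{positivity} comes from $\psi$ being nontrivial and nonnegative, not from HLS itself. One further minor point in your favor: by working with an explicitly nonnegative $\psi\in C_c^\infty$ you make transparent the requirement that the test function have a nontrivial positive part (needed since $f\equiv 0$ on $(-\infty,0]$), which the paper's phrasing around $u_0^+$ leaves somewhat implicit.
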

\begin{proof}
The proof of $(i)$ easily follows buying the line of Lemma \ref{LN}, so that we only prove $(ii)$. Fixed $u_0 \in E_{\vr}$ with $u_0^{+}(x)=\max\{u_0(x),0\}$, we set
$$
w(t)=\mathfrak{F}(\frac{tu_0}{\|u_0\|_{\vr}})>0, \,\,\ \mbox{for} \,\,\, t>0.
$$
By the Ambrosetti-Rabinowitz condition $(f_3)$ we know
$$
\frac{w'(t)}{w(t)}\geq \frac{2K}{t} \,\,\, \mbox{for} \,\,\, t>0.
$$
Integrate this over $[1, s\|u_0\|_{\vr}]$ with $s>\frac{1}{\|u_0\|_{\vr}}$ to get
$$
\mathfrak{F}(su_0)\geq \mathfrak{F}(\frac{u_0}{\|u_0\|_{\vr}})\|u_0\|_{\vr}^{2K} s^{2K}.
$$
Therefore
$$
I_{\vr}(su_0)\leq C_1 s^2-C_2s^{2K} \,\,\, \mbox{for} \,\,\ s > \frac{1}{\|u_0\|_{\vr}}.
$$
Since $K>1$, $(ii)$ follows taking $e=s u_0$ and $s$ large enough.
\end{proof}

\noindent By the Ekeland Variational Principle \cite{E} we know there is a $(PS)_{c_{\vr}}$ sequence $(u_n) \subset E$, i.e.
$$
I_{\vr}'(u_n)\rightarrow0,\quad I_{\vr}(u_n)\rightarrow
c_{\vr},
$$
where $c_{\vr}$ defined by
\begin{equation} \label{m1}
0<c_{\vr}:=\inf_{u\in E\backslash\{0\}} \max_{t\geq 0}
I_{\vr}(tu)
\end{equation}
and moreover there is a constant $c>0$ independent of $\vr$ such that $c_{\vr}>c>0$.
Using assumption  $(f_5)$, for each $u\in E_{\vr}\backslash\{0\}$, there is an unique $t=t(u)$ such that
 $$
I_{\vr}(t(u)u)=\max_{s\geq0}I_{\vr}(su)\ \ \hbox{and}\ \ t(u)u\in {\cal N}_{\vr}.
 $$
Then it is standard to see (see \cite{MW}) that the minimax value $c_{\vr}$ can be characterized by
\begin{equation} \label{m2}
c_{\vr}= \inf_{u\in {\cal N}_{\vr}}I_{\vr}(u).
\end{equation}

\begin{lem}\label{EML}
Suppose that assumptions $(f_1)-(f_5)$, $(V_1)$ and $(V_2)$ hold. Let $c_{\vr}$ be the minimax value defined in \eqref{m1}, then there holds
$$
\lim_{\vr\to0}c_{\vr}=m_{V_{0}},
$$
where $m_{V_{0}}$ is the minimax value defined in \eqref{m} with $W(x)\equiv V_{0}$.
Hence, by Lemma \ref{MPlevel-estimate}, there is $\vr_0>0$ such that
$$
c_\vr <\frac{4-\mu}{8}, \quad \forall \vr \in [0, \vr_0).
$$
Moreover, since $m_{V_{0}} < m_{V_{\infty}}$, we also have
$$
\lim_{\vr\to0}c_{\vr}\leq m_{V_{\infty}}.
$$
\end{lem}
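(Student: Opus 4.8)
The plan is to squeeze $c_\vr$ between a lower and an upper bound, both converging to $m_{V_0}$, and then read off the two remaining assertions.

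First, for the lower bound I would use $(V_1)$: since $V(\vr x)\ge V_0$ for every $x$, we have $E_\vr\hookrightarrow E$ and $I_\vr(u)\ge\Phi_{V_0}(u)$ for every $u\in E_\vr$. Hence, for any $u\in E_\vr\setminus\{0\}$,
\[
\max_{t\ge0}I_\vr(tu)\ \ge\ \max_{t\ge0}\Phi_{V_0}(tu)\ \ge\ m_{V_0},
\]
the last inequality being the standard fact that every ray through the origin crosses the mountain pass level (using $(f_3)$ so that $\Phi_{V_0}(tu)\to-\infty$, and $(f_5)$, under which $m_{V_0}=\inf_{u\ne0}\max_{t\ge0}\Phi_{V_0}(tu)$). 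Taking the infimum over $u$ and using the characterization \eqref{m1} gives $c_\vr\ge m_{V_0}$ for every $\vr>0$.

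Next, for the upper bound, note first that by $(V_2)$ the set $M$ is nonempty: outside a large ball one has $V>(V_0+V_\infty)/2$, so the continuous $V$ attains its infimum $V_0$ at some $x_0$. Applying Theorem \ref{thm-Existence} with $W\equiv V_0$ (for which $(W_1)$--$(W_2)$ hold trivially and $(f_1)$--$(f_4)$ are assumed) yields a ground state $w\ge0$ of $\Phi_{V_0}$, lying on the Nehari manifold $\mathcal N_{V_0}$ with $\Phi_{V_0}(w)=m_{V_0}$. I would then fix a cut-off $\psi\in C_c^\infty(\R^2)$ with $0\le\psi\le1$, $\psi\equiv1$ on $B_1$, $\mathrm{supp}\,\psi\subset B_2$, set $w_R:=\psi(\cdot/R)\,w$ (so $w_R\to w$ in $E$ as $R\to\infty$), and take as test functions $u_\vr(x):=w_R(x-x_0/\vr)\in E_\vr$. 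Translation invariance of the convolution term gives $\mathfrak F(tu_\vr)=\mathfrak F(tw_R)$, while
\[
\|u_\vr\|_\vr^2=\int_{\R^2}\bigl(|\nabla w_R|^2+V(\vr y+x_0)|w_R|^2\bigr)\,dy\ \longrightarrow\ \|w_R\|_{V_0}^2
\]
as $\vr\to0$, by continuity of $V$ at $x_0$ and dominated convergence on the compact $\mathrm{supp}\,w_R$; hence $I_\vr(tu_\vr)\to\Phi_{V_0}(tw_R)$ uniformly for $t$ in bounded intervals. Since $\|u_\vr\|_\vr$ stays bounded while $\mathfrak F(tw_R)$ grows superquadratically in $t$ (by $(f_3)$ with $K>1$, $\mathfrak F(tw_R)\ge c_R t^{2K}-C_R$ once $R$ is large), the maximum of $t\mapsto I_\vr(tu_\vr)$ is attained on a fixed compact interval independent of $\vr$, so by \eqref{m1}
\[
\limsup_{\vr\to0}c_\vr\ \le\ \limsup_{\vr\to0}\max_{t\ge0}I_\vr(tu_\vr)\ =\ \max_{t\ge0}\Phi_{V_0}(tw_R).
\]
Letting $R\to\infty$, using $w_R\to w$ in $E$, continuity of $\Phi_{V_0}$, and $(f_5)$ (so that $\max_{t\ge0}\Phi_{V_0}(tw)=\Phi_{V_0}(w)=m_{V_0}$, attained at $t=1$), we obtain $\limsup_{\vr\to0}c_\vr\le m_{V_0}$, and together with the lower bound, $\lim_{\vr\to0}c_\vr=m_{V_0}$. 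The threshold claim then follows from Lemma \ref{MPlevel-estimate} applied with $W\equiv V_0$: there $W_\rho\equiv V_0$ and $\mathcal W=\inf_{\rho>0}\frac{e^{\frac{4-\mu}4V_0\rho^2}}{16\pi^2\rho^{4-\mu}}\frac{(4-\mu)^2}{(2-\mu)(3-\mu)}$, and the hypothesis $\beta>\mathcal W$ is exactly $(f_4)$, so $m_{V_0}<\frac{4-\mu}{8}$; since $c_\vr\to m_{V_0}$ there is $\vr_0>0$ with $c_\vr<\frac{4-\mu}{8}$ on $(0,\vr_0)$, extended to $\vr=0$ by setting $c_0:=m_{V_0}$. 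Finally, $\Phi_{V_0}\le\Phi_{V_\infty}$ pointwise gives $m_{V_0}\le m_{V_\infty}$, hence $\lim_{\vr\to0}c_\vr\le m_{V_\infty}$; the strictness $m_{V_0}<m_{V_\infty}$ is obtained by cases: either $m_{V_\infty}\ge\frac{4-\mu}8>m_{V_0}$, or $m_{V_\infty}<\frac{4-\mu}8$, in which case the argument of Section \ref{} (Lemma \ref{lem-PS} and the proof of Theorem \ref{thm-Existence}, which only invokes $(f_4)$ to secure the threshold) produces a ground state $w_\infty$ of $\Phi_{V_\infty}$, whence $m_{V_0}\le\max_{t\ge0}\Phi_{V_0}(tw_\infty)<\max_{t\ge0}\Phi_{V_\infty}(tw_\infty)=m_{V_\infty}$ because $V_\infty>V_0$ and $w_\infty\not\equiv0$.

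The main obstacle is the upper bound: one must check that truncating and translating the only $H^1$-decaying (not compactly supported) ground state $w$ does not disrupt the critical-exponential nonlinearity, i.e. that $\mathfrak F(tw_R)\to\mathfrak F(tw)$ and that the mountain-pass maxima along the rays through $u_\vr$ remain uniformly bounded in $t$; this rests on the Hardy--Littlewood--Sobolev inequality of Proposition \ref{HLS} together with the Trudinger--Moser estimate of Lemma \ref{Trudinger-Moser} and the subcritical behaviour $(f_2)$ near the origin.
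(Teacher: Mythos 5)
Your proof is correct and follows the same overall strategy as the paper: squeeze $c_\vr$ between the trivial lower bound $m_{V_0}$ (coming from $V\geq V_0$, hence $I_\vr\geq\Phi_{V_0}$) and an upper bound obtained by testing along rays through compactly supported, translated approximations of the $\Phi_{V_0}$ ground state $w$, then read off the last two assertions from Lemma \ref{MPlevel-estimate} and the ordering of the constant-potential levels. The organizational details of the upper bound differ: the paper fixes, for each $\delta>0$, a compactly supported $w_\delta\in\mathcal N_{V_0}$ with $\Phi_{V_0}(w_\delta)<m_{V_0}+\delta$, multiplies by a cut-off $\eta(\vr_n x)$, and tracks the Nehari projection $t_n\to 1$ via $(f_5)$; you cut $w$ off at scale $R$, show the max of $I_\vr$ along the ray converges to the max of $\Phi_{V_0}$ along the ray, and then send $R\to\infty$ using continuity of $u\mapsto\max_t\Phi_{V_0}(tu)$ at $w$. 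Both routes rest on the same continuity properties of $\mathfrak F$ and $\mathfrak F'$ and on the uniqueness of the ray-maximum granted by $(f_5)$, so neither is intrinsically more elementary. Two points where you are more careful than the paper are worth noting: (a) you explicitly translate the test function to $x_0\in M$, which is genuinely needed — the paper's step $\int(V(\vr_n x)-V_0)|v_n|^2\to 0$ tacitly assumes $V(0)=V_0$, a harmless but unstated normalization; and (b) you actually supply a proof of the strict inequality $m_{V_0}<m_{V_\infty}$, which the paper asserts without argument, by the dichotomy on whether $m_{V_\infty}$ lies below or above the Trudinger--Moser threshold $\frac{4-\mu}{8}$ (and in the second case invoking the existence machinery for the constant potential $V_\infty$, which only needs the threshold bound itself, not $(f_4)$ adapted to $V_\infty$). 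Both additions are sound.
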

\begin{proof}
Let $w\in E$ be the ground state solution obtained in Theorem \ref{thm-Existence}, then there holds
$$
\int_{\R^2}\big(|\nabla w|^2+ V_0|w|^2\big)=\int_{\R^2}\Big[\frac{1}{|x|^{\mu}}\ast F(w)\Big]f(w)w
$$
In what follows, given $\delta >0$, we fix $w_\delta \in C_{0}^{\infty}(\mathbb{R}^{2})$ verifying
\begin{equation} \label{ESc1}
w_\delta \in {\cal N}_{V_0}, \,\, w_\delta \to w \,\,\, \mbox{in} \,\,\, E \,\,\, \mbox{and} \,\,\, \Phi_{V_0}(w_\delta) < m_{V_0} +\delta.
\end{equation}
Now, choose $\eta\in C^{\infty}_0(\R^2, [0,1])$ be such that $\eta=1$ on $B_1(0)$ and $\eta=0$ on $\R^2\backslash B_2(0)$, let us define $v_{n}(x)=\eta(\vr_n x)w_\delta(x)$, where $\vr_n \to 0$. Clearly
$$
v_{n}\to w_\delta \ \ \hbox{in} \ \ E,\ \ \hbox{as} \ \ n\to +\infty.
$$
From the definition of $ {\cal N}_{\vr}$, we know that there exists unique $t_{n}$ such that $t_{n}v_{n}\in {\cal N}_{\vr_n}$. Consequently,
$$
c_{\vr_n}\leq I_{\vr_n}(t_{n}v_{n})=\frac{t^2_{n}}{2}\int_{\R^2}\big(|\nabla v_{n}|^2+ V(\vr_n x)|v_{n}|^2\big)-\frac12\int_{\R^2}\Big[\frac{1}{|x|^{\mu}}\ast F(t_{n}v_{n})\Big]F(t_{n}v_{n}).
$$
Observe that
$$
\langle I_{\vr_n}'(t_{n}v_{n}),t_{n}v_{n}\rangle=0,
$$
or equivalently,
\begin{multline} \label{ESc}
{t^2_{n}}\int_{\R^2}\big(|\nabla v_{n}|^2+ V(\vr_n x)|v_{n}|^2\big)=\int_{\R^2}\Big[\frac{1}{|x|^{\mu}}\ast F(t_{n}v_{n})\Big]f(t_{n}v_{n})t_{n}v_{n}\\
\geq C t^{2K}_{n}\int_{\R^2}\Big[\frac{1}{|x|^{\mu}}\ast |v_{n}|^{K}\Big]|v_{n}|^{K}
\end{multline}
which means $\{t_{n}\}$ is bounded and thus, up to  subsequence, we may assume that $t_{n}\to t_0\geq 0$. Notice that there is a constant $c>0$ independent of $\vr$ such that $c_{\vr_n}>c>0$. Then, this information implies that $t_0> 0$. Take limit in the equality in \eqref{ESc} to find
\begin{equation} \label{ESc2}
\int_{\R^2}\big(|\nabla w_\delta|^2+ V_0|w_\delta|^2\big)=t_0^{-2}\int_{\R^2}\Big[\frac{1}{|x|^{\mu}}\ast F(t_0w_\delta)\Big]f(t_0w_\delta)t_0w_\delta.
\end{equation}
Hence, from \eqref{ESc1} and \eqref{ESc2},
$$
t_0^{-2}\int_{\R^2}\Big[\frac{1}{|x|^{\mu}}\ast F(t_0w)\Big]f(t_0w)t_0w-\int_{\R^2}\Big[\frac{1}{|x|^{\mu}}\ast F(w)\Big]f(w)w=0.
$$
Thereby, by monotone assumption $(f_5)$, we derive that
$$
t_0=1.
$$
Since
$$
\int_{\R^2}\big(V(\vr_n x)-V_0\big)|v_{n}|^2 \to 0 \,\,\ \mbox{and} \,\,\, \Phi_{V_{0}}(t_{n}v_{n}) \to \Phi_{V_{0}}(w_\delta),
$$
the following inequality
$$
c_{\vr_n}\leq I_{\vr_n}(t_{n}v_{n})=\Phi_{V_{0}}(t_{n}v_{n})+\frac{t^2_{n}}{2}\int_{\R^2}\big(V(\vr_n x)-V_0\big)|v_{n}|^2,
$$
gives
$$
\limsup_{n \to +\infty} c_{\vr_n}\leq \Phi_{V_{0}}(w_\delta) \leq m_{V_0} +\delta.
$$
As $\delta$ is arbitrary, we deduce that
$$
\limsup_{n \to +\infty} c_{\vr_n}\leq m_{V_0}.
$$
As $\vr_n$ is also arbitrary, it follows that
\begin{equation} \label{PASSO1}
\limsup_{\vr \to 0} c_{\vr}\leq m_{V_0}.
\end{equation}

\noindent  On the other hand, we already know that
$$
c_\vr \geq m_{V_0}, \quad \forall \vr >0,
$$
which implies
\begin{equation} \label{PASSO2}
\liminf_{\vr \to 0}c_\vr \geq m_{V_0}.
\end{equation}
From (\ref{PASSO1}) and (\ref{PASSO2}) we get
$$
\lim_{\vr \to 0}c_\vr \geq m_{V_0}.
$$
and the proof follows by using Lemma \ref{MPlevel-estimate}.

\end{proof}

\begin{lem}\label{PS}
Suppose that the assumptions $(f_1)-(f_5)$,  $(V_1)$ and $(V_2)$ hold. Let $\{u_n\}$ be a $(PS)_{c_{\vr}}$ sequence
 with $\vr \in [0, \vr_0)$. Let $ u_{\vr}$ be the weak limit of  $u_n$, then $\{u_n\}$ converges strongly to $u_\vr$ in $E_{\vr}$, i.e. $I_{\vr}$  satisfies $(PS)_{c_\vr}$ condition for $\vr \in [0, \vr_0)$.
\end{lem}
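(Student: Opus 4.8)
The plan is to follow, with $V(\vr\,\cdot)$ replacing $W$, the compactness analysis of Lemma~\ref{lem-PS}, and then to use the strict energy bounds of Lemma~\ref{EML} to upgrade weak convergence to strong convergence (I carry this out for $\vr\in(0,\vr_0)$). First I would check boundedness: exactly as in Lemma~\ref{lem-PS}, condition $(f_3)$, which gives $sf(s)\ge KF(s)$ with $K>1$, yields
\[
\tfrac12\Big(1-\tfrac1K\Big)\|u_n\|_\vr^2\le I_\vr(u_n)-\tfrac1{2K}\langle I_\vr'(u_n),u_n\rangle\le c_\vr+\mathrm{o}(1)+\mathrm{o}(1)\|u_n\|_\vr ,
\]
so $\{u_n\}$ is bounded in $E_\vr$; passing to a subsequence, $u_n\rightharpoonup u_\vr$ in $E_\vr$, $u_n\to u_\vr$ in $L^q_{loc}(\R^2)$ for every $q\ge1$ and a.e., and $\int_{\R^2}[\frac{1}{|x|^{\mu}}\ast F(u_n)]F(u_n)$, $\int_{\R^2}[\frac{1}{|x|^{\mu}}\ast F(u_n)]u_nf(u_n)$ stay uniformly bounded. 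Since $x\mapsto V(\vr x)$ is bounded below by $V_0$ and bounded on compacta, the argument of Lemma~\ref{lem-PS} then applies verbatim to give $[\frac{1}{|x|^{\mu}}\ast F(u_n)]F(u_n)\to[\frac{1}{|x|^{\mu}}\ast F(u_\vr)]F(u_\vr)$ in $L^1_{loc}(\R^2)$ and $I_\vr'(u_\vr)=0$; in particular $\|u_\vr\|_\vr^2=\mathfrak F'(u_\vr)[u_\vr]$, and if $u_\vr\neq0$ then $u_\vr\in\mathcal N_\vr$, so $I_\vr(u_\vr)\ge c_\vr$ by \eqref{m2}.

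Next I would rule out $u_\vr=0$. If $\{u_n\}$ is vanishing, then, repeating the computation from the vanishing case in the proof of Theorem~\ref{thm-Existence} (Lions' lemma together with the Adachi--Tanaka and do~\'O inequalities, the point being that $c_\vr<\frac{4-\mu}{8}$ keeps the relevant Trudinger--Moser exponents below $4\pi$), one gets $\int_{\R^2}[\frac{1}{|x|^{\mu}}\ast F(u_n)]u_nf(u_n)\to0$, hence $\|u_n\|_\vr\to0$ and $c_\vr=\lim I_\vr(u_n)=0$, contradicting $c_\vr>0$. If $\{u_n\}$ is non-vanishing there are $r,\delta>0$ and $\{y_n\}$ with $\int_{B_r(y_n)}|u_n|^2\ge\delta$; since $u_n\rightharpoonup0$ we must have $|y_n|\to\infty$, hence $|\vr y_n|\to\infty$, and by $(V_2)$ the translated functions $u_n(\cdot+y_n)$ form, up to $\mathrm{o}(1)$, a Palais--Smale sequence at level $c_\vr$ for a functional whose potential is $\ge V_\infty-\mathrm{o}(1)$ on compact sets; the standard splitting argument then produces a nontrivial critical point of $\Phi_{V_\infty}$, so $c_\vr\ge m_{V_\infty}$, contradicting $c_\vr<m_{V_\infty}$, which holds for $\vr$ small since $c_\vr\to m_{V_0}<m_{V_\infty}$ by Lemma~\ref{EML}. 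Hence $u_\vr\neq0$.

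For the last step I would compare energies. Put $A_n:=\int_{\R^2}[\frac{1}{|x|^{\mu}}\ast F(u_n)](u_nf(u_n)-KF(u_n))\ge0$; then
\[
I_\vr(u_n)-\tfrac1{2K}\langle I_\vr'(u_n),u_n\rangle=\tfrac{K-1}{2K}\|u_n\|_\vr^2+\tfrac1{2K}A_n\longrightarrow c_\vr .
\]
Since $u_n\to u_\vr$ a.e., the nonnegative integrand defining $A_n$ converges a.e. on $\R^2\times\R^2$, so $\liminf_n A_n\ge A_\infty$ by Fatou, where $A_\infty$ is the analogous expression for $u_\vr$; combined with weak lower semicontinuity of $\|\cdot\|_\vr$ and the identity $\frac{K-1}{2K}\|u_\vr\|_\vr^2+\frac1{2K}A_\infty=I_\vr(u_\vr)-\frac1{2K}\langle I_\vr'(u_\vr),u_\vr\rangle=I_\vr(u_\vr)$, this gives $c_\vr\ge I_\vr(u_\vr)$. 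With $I_\vr(u_\vr)\ge c_\vr$ from the previous step we get $I_\vr(u_\vr)=c_\vr$; since $\{\|u_n\|_\vr^2\}$ and $\{A_n\}$ are bounded and the limit of their positively weighted sum equals the sum of the two separate limit-inferior bounds, each sequence converges to its bound, so $\|u_n\|_\vr^2\to\|u_\vr\|_\vr^2$. Weak convergence together with convergence of the norms in the Hilbert space $E_\vr$ yields $u_n\to u_\vr$ strongly, i.e.\ $I_\vr$ satisfies $(PS)_{c_\vr}$.

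The main obstacle is the exclusion of $u_\vr=0$: the non-vanishing-at-infinity alternative is precisely where Rabinowitz's condition $(V_2)$ and the strict inequality $c_\vr<m_{V_\infty}$ are indispensable, while the vanishing alternative rests on the delicate interplay of the critical Trudinger--Moser inequality with the level bound $c_\vr<\frac{4-\mu}{8}$. Everything else — boundedness, identification of the weak limit, and the final energy comparison — is a transcription of Lemma~\ref{lem-PS} plus elementary Hilbert-space convexity.
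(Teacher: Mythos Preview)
Your overall architecture matches the paper's: boundedness via $(f_3)$, identification of the weak limit as a critical point through the machinery of Lemma~\ref{lem-PS}, exclusion of the trivial limit by a vanishing/non-vanishing dichotomy, and then Fatou together with the Nehari characterization \eqref{m2} to force $I_\vr(u_\vr)=c_\vr$ and strong convergence. The vanishing branch and the final energy comparison are essentially the paper's.

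The substantive divergence is in the non-vanishing branch when $u_\vr=0$. You translate by $y_n$ and invoke a ``standard splitting argument'' to produce a nontrivial critical point of $\Phi_{V_\infty}$, hence $c_\vr\ge m_{V_\infty}$. The paper does \emph{not} pass to a translated limit problem: it keeps the original sequence, projects onto $\mathcal N_{V_\infty}$ via the unique $t_n>0$ with $t_nu_n\in\mathcal N_{V_\infty}$, shows $t_n\to t_0\in(0,\infty)$ (using the translated sequence only to locate a set where the weak limit is bounded below), and then runs a trichotomy $t_0>1$, $t_0=1$, $t_0<1$. Each case yields $c_\vr\ge m_{V_\infty}$, exploiting $(f_5)$ and the observation that $\int_{\R^2}(V_\infty-V(\vr x))|u_n|^2\le\zeta C+o_n(1)$ because $u_n\to0$ in $L^2_{loc}$ while $V\ge V_\infty-\zeta$ outside a large ball. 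This Nehari--projection trichotomy sidesteps two points your sketch hides: (i) under $(V_2)$ only $\liminf_{|x|\to\infty}V(x)=V_\infty$, so the translated potential $V(\vr(\cdot+y_n))$ need not converge, and identifying an autonomous limit equation for $u_n(\cdot+y_n)$ requires an additional compactness step on the potential; (ii) passing to the limit in the nonlocal critical-growth term after translation is precisely the delicate content of Lemma~\ref{lem-PS}, but now for a sequence of $n$-dependent functionals. Your route can be completed (extract, if $V$ is bounded, a weak-$*$ limit $\tilde V\ge V_\infty$ of the shifted potentials, re-run Lemma~\ref{lem-PS} for the translated sequence, and compare with $m_{\tilde V}\ge m_{V_\infty}$), but it is not ``standard'' in this setting and should be written out. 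The paper's approach buys a proof that never leaves the original variable and uses $(f_5)$ decisively; yours buys a cleaner conceptual statement at the cost of that extra identification.
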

\begin{proof} First recall that
\begin{eqnarray}
&&c_\vr <\frac{4-\mu}{8}, \quad \forall \vr \in [0, \vr_0) \label{EST1}\\
&&m_{V_0} < m_{V_\infty}. \label{EST2}
\end{eqnarray}
and there are positive constants $a_1, a_2$ such that
\begin{equation}\label{EST3}
a_1<\|u_n\|_{\vr}<a_2, \quad \forall n \in \mathbb{N} \quad ( \mbox{for some subsequence}).
\end{equation}

In the sequel, our first goal is to prove that $u_\vr \not= 0$. To do that, we will argue by contradiction, assuming that $u_\vr=0$.

\vspace{0.5 cm}
\noindent {\bf Claim:} There exist $\beta,\tilde{R} >0$ and  $\{y_n\}\subset \mathbb{R}^2$ such that
$$
\int_{B_{\tilde{R}}(y_n)}|u_n|^{2}\geq \beta.
$$
Indeed, if not by applying a result due to Lions, we obtain
$$
u_n \to 0 \quad \mbox{in} \quad L^{q}(\mathbb{R}^{2}) \quad \forall q \in (2,+\infty).
$$
Following line by line the argument of Section 2,  we have
$$
\left|\int_{\mathbb R^2}\left[\frac{1}{|x|^{\mu}}\ast
F(u_n)\right]F(u_n)\right| \to 0, \quad n\to \infty.
$$
Since $(u_n)$ be a $(PS)_{c_{\vr}}$ sequence
 with $c_\vr <\frac{4-\mu}{8}$, we know that
\begin{equation}\label{EST4}
\limsup_{n \to \infty}\|u_n\|^{2}_{\vr}=2c_\vr <\frac{4-\mu}{4}.
\end{equation}
As in the proof of Theorem \ref{thm-Existence}, we can conclude that
$$
\left|\int_{\mathbb R^2}\left[\frac{1}{|x|^{\mu}}\ast
F(u_n)\right]f(u_n)u_n\right| \to 0, \quad n\to \infty.
$$
This together with $\langle I'_{\vr}(u_n), u_n\rangle=o_n(1)$ implies that
$$
\lim_{n \to +\infty}\|u_n\|^{2}_{\vr}=0
$$
which contradicts (\ref{EST4}), proving the claim.

Next, we fix $t_n >0$ such that $t_nu_n \in {{\cal N}_{V_{\infty}}}$. We claim that $\{t_n\}$ is bounded. In fact, setting  $v_n=u_n(x+y_n)$, by Claim 1, we may assume that, up to a subsequence, ${v}_n\rightharpoonup {v}$ in $E_\vr$. Moreover, using the fact that $u_n \geq 0$ for all $n \in \mathbb{N}$, there exists $a_3>0$ and a subset $\Omega\subset \mathbb{R}^2$ with positive measure such that ${v}(x) >a_3 $ for all $x \in \Omega$. We have
$$
\int_{\mathbb{R}^{2}}(|\nabla u_n|^{2}+V_\infty|u_n|^{2})=\int_{\mathbb{R}^2}\int_{\mathbb{R}^2}\Big(\frac{F(t_nu_n(y))f(t_nu_n(x))t_nu_n(x)}{t^2_n|x-y|^{\mu}}\Big)
$$
and so,
$$
\int_{\mathbb{R}^{2}}(|\nabla u_n|^{2}+V_\infty|u_n|^{2})=\int_{\mathbb{R}^2}\int_{\mathbb{R}^2}\Big(\frac{F(t_nv_n(y))f(t_nv_n(x))t_nv_n(x)}{t^2_n|x-y|^{\mu}}\Big)
$$
from which
$$
\int_{\mathbb{R}^{2}}(|\nabla u_n|^{2}+V_\infty|u_n|^{2})\geq \int_{\Omega}\int_{\Omega}\Big(\frac{F(t_nv_n(y))f(t_nv_n(x))t_nv_n(x)}{t^2_n|x-y|^{\mu}}\Big)
$$
Since
$$
\liminf_{n \to \infty}\frac{F(t_nv_n(y))f(t_nv_n(x))t_nv_n(x)}{t^2_n|x-y|^{\mu}}=+\infty \quad \mbox{a.e.}
$$
Fatou's lemma gives
$$
\liminf_{n \to +\infty}\int_{\mathbb{R}^{2}}(|\nabla u_n|^{2}+V_\infty|u_n|^{2})=+\infty,
$$
which is a contradiction since $\{u_n\}$ is bounded in $E_\vr$.  Thus, without loss of generality we may assume
$$
\lim_{n \to +\infty}t_n=t_0>0.
$$

\noindent In what follows, we divide the remaining part of the proof into three steps.\\
\noindent {\bf Step 1.} The number $t_0$ is less or equal to 1.

\noindent In fact, suppose by contradiction that the above claim does not hold. Then, there exist $\delta>0$ and a subsequence of $(t_n)$, still denoted by itself, such that
$$
t_n\geq 1+\delta\ \hbox{ for all}\ \ n\in \N.
$$
Since $\langle I'_{\vr}(u_n), u_n\rangle=o_n(1)$ and $(t_nu_n)\subset {{\cal N}_{V_{\infty}}}$, we have
$$
\int_{\mathbb{R}^2} (|\nabla u_n|^{2}+ V(\varepsilon x) |u_n|^{2})=\mathfrak{F}'(u_n)[u_n]+o_n(1)
$$
and
$$
t^2_n\int_{\mathbb{R}^2} (|\nabla u_n|^{2}+ V_{\infty} |u_n|^{2})=\mathfrak{F}'(t_nu_n)[t_nu_n].
$$
Consequently,
\begin{multline*}
\int_{\mathbb{R}^2} (V_{\infty}-V(\vr x))|u_n|^{2}+o_n(1)\\
=\int_{\mathbb{R}^2}\int_{\mathbb{R}^2}\Big(\frac{F(t_nu_n(y))f(t_nu_n(x))t_nu_n(x)}{t^2_n|x-y|^{\mu}}-\frac{F(u_n(y))f(u_n(x))u_n(x)}{|x-y|^{\mu}}\Big).
\end{multline*}
Given $\zeta>0$, from assumptions $(V_1)$ and $(V_2)$, there exists $R=R(\zeta)>0$ such that
\begin{equation}\label{V1}
V(\varepsilon x)\geq V_{\infty}-\zeta,\  \hbox{for any}\ |x|\geq R.
\end{equation}
Using the fact that $u_n\to 0$ in $L^2(B_R(0))$, we conclude that
$$
\int_{\mathbb{R}^2}\int_{\mathbb{R}^2}\Big(\frac{F(t_nu_n(y))f(t_nu_n(x))t_nu_n(x)}{t^2_n|x-y|^{\mu}}-\frac{F(u_n(y))f(u_n(x))u_n(x)}{|x-y|^{\mu}}\Big)\leq\zeta C+o_n(1),
$$
where $\displaystyle C=\sup_{n \in \mathbb{N}}|u_n|^2_2$. Using the sequence ${v}_n=u_n(x+y_n)$ again, we find the inequality
$$
\aligned
0&<\int_{\Omega}\int_{\Omega}\frac{|v_n(y)||v_n(x)|}{|x-y|^{\mu}}\Big[\frac{F((1+\delta)v_n(y))f((1+\delta)v_n(x))(1+\delta)v_n(x)}{(1+\delta)|v_n(y)|(1+\delta)|v_n(x)|}\\
&\qquad-\frac{F(v_n(y))f(v_n(x))v_n(x)}{|v_n(y)||v_n(x)|}\Big]\\
&=\int_{\Omega}\int_{\Omega}\Big[\frac{F((1+\delta)v_n(y))f((1+\delta)v_n(x))(1+\delta)v_n(x)}{(1+\delta)^2|x-y|^{\mu}}-\frac{F(v_n(y))f(v_n(x))v_n(x)}{|x-y|^{\mu}}\Big]\\
&\leq\zeta C+o_n(1)
\endaligned
$$
Letting $n\to \infty$ in the last inequality and applying Fatou's lemma, it follows
that
$$
0<\int_{\Omega}\int_{\Omega}\frac{F((1+\delta)v(y))f((1+\delta)v(x))(1+\delta)v(x)}{(1+\delta)^2|x-y|^{\mu}}-\frac{F(v(y))f(v(x))v(x)}{|x-y|^{\mu}}\leq\zeta C
$$
which is absurd, since the arbitrariness of $\zeta $.

\par
\noindent {\bf Step 2.} $t_0=1$. \\
\noindent In this case, we begin with recalling that $m_{V_{\infty}}\leq \Phi_{V_{\infty}}(t_nu_n)$. Therefore,
$$
c_{\vr}+o_n(1)=I_{\vr}(u_n) \geq I_{\vr}(u_n)+m_{V_{\infty}}-\Phi_{V_{\infty}}(t_nu_n).
$$
and from
$$
\aligned
I_{\vr}(u_n)-\Phi_{V_{\infty}}(t_nu_n)&=\frac{(1-t^2_n)}{2}\int_{\mathbb{R}^2} |\nabla u_n|^2+ \frac{1}{2}\int_{\mathbb{R}^2}V(\varepsilon x) |u_n|^2\\
&\hspace{4mm}-\frac{t^2_n}{2}\int_{\mathbb{R}^2}V_{\infty}|u_n|^2+\mathfrak{F}(t_nu_n)-\mathfrak{F}(u_n),
\endaligned
$$
and the fact that $\{u_n\}$ is bounded in $E_{\vr}$ as well as $u_n\rightharpoonup0$,  we derive from \eqref{V1}
$$
c_{\vr}+o_n(1) \geq m_{V_{\infty}}-\zeta C+o_n(1),
$$
and since $\zeta$ is arbitrary we obtain
$$
\limsup_{\vr \to 0}c_{\vr}\geq m_{V_{\infty}},
$$
which  contradicts Lemma \ref{EML}.
\vspace{0.2 cm}

\noindent {\bf Step 3.} $t_0<1.$ \\
\noindent  In this case, we may assume that $t_n <1$ for all $n \in\mathbb{N}$.  Since $m_{V_{\infty}}\leq \Phi_{V_{\infty}}(t_nu_n)$ and $\langle\Phi'_{V_{\infty}}(t_nu_n),t_nu_n\rangle=0$, we have
$$
\aligned
m_{V_{\infty}}&\leq \Phi_{V_{\infty}}(t_nu_n)-\frac12\langle \Phi_{V_{\infty}}'(t_nu_n),t_nu_n\rangle\\
&=\frac12\mathfrak{F}'(t_nu_n)[t_nu_n]-\mathfrak{F}(t_nu_n)\\
&=\frac12\int_{\R^2}\int_{\R^2}\frac{F(t_nu_n(y))f(t_nu_n(x))t_nu_n(x)}{|x-y|^{\mu}}-\frac12\int_{\R^2}\int_{\R^2}\frac{F(t_nu_n(y))F(t_nu_n(x))}{|x-y|^{\mu}}\\
&< \frac12\int_{\mathbb{R}^2}\int_{\mathbb{R}^2}\frac{F(u_n(y))f(u_n(x))u_n(x)}{|x-y|^{\mu}}-\frac12\int_{\mathbb{R}^2}\int_{\mathbb{R}^2}\frac{F(u_n(y))F(u_n(x))}{|x-y|^{\mu}}\\
&= I_{\vr}(u_n)-\frac12\langle I'_{\vr}(u_n),u_n\rangle\\
&=c_{\vr}+o_n(1),
\endaligned
$$
which yields a contradiction also in this case. From Steps 1, 2 and 3, we deduce that $u_\vr \not=0$. Hence, by Fatou's Lemma and using the characterization of $c_\vr$, it follows that
$$
\aligned
c_\vr&\leq I_\vr(u_\vr)
=I_\vr(u_\vr)-\frac{1}{2}\langle I'_\vr(u_\vr), u_\vr\rangle\\
&=\frac12\int_{\mathbb{R}^2}\int_{\mathbb{R}^2}\frac{F(u_\vr(y))[f(u_\vr(x))u_\vr(x)-F(u_\vr(x)]}{|x-y|^{\mu}}\\
&= \liminf_{n \to +\infty}\frac12\int_{\mathbb{R}^2}\int_{\mathbb{R}^2}\frac{F(u_n(y))[f(u_n(x))u_n(x)-F(u_n(x)]}{|x-y|^{\mu}}\\
 &\leq\limsup_{n \to +\infty}(I_\vr(u_n)-\frac{1}{2}\langle I'_\vr(u_n), u_n\rangle)=c_\vr
\endaligned
$$
thus
$$
I_\vr(u_\vr)=c_\vr.
$$
Now, using the following inequalities
$$
c_\vr=I_\vr(u_\vr)-\frac{1}{2K}\langle I'_\vr(u_\vr), u_\vr\rangle \leq \liminf_{n \to +\infty}(I_\vr(u_n)-\frac{1}{2K}\langle I'_\vr(u_n), u_n\rangle)\leq \limsup_{n \to +\infty}(I_\vr(u_n)-\frac{1}{2K}\langle I'_\vr(u_n), u_n\rangle)=c_\vr
$$
we actually have
$$
u_n \to u_\vr \quad \mbox{in} \quad E_\vr,
$$
showing that $I_\vr$ verifies the $(PS)_{c_\vr}$ condition.
\end{proof}
As an immediate consequence of Lemma \ref{PS}, we have
\begin{cor}\label{Existence}
The minimax value $c_{\vr}$ is achieved if $\vr$ is small enough and hence problem $(SNS^*)$ has a solution of least energy if  $\vr$ is small enough.
\end{cor}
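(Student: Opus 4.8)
The plan is to harvest the conclusion directly from Lemma \ref{PS}, once a suitable Palais--Smale sequence has been produced. Fix $\vr\in[0,\vr_0)$, with $\vr_0$ as in Lemma \ref{EML}. By Lemma \ref{mountain:1} the functional $I_\vr$ has the Mountain Pass geometry, so the Ekeland variational principle yields a $(PS)_{c_\vr}$ sequence $\{u_n\}\subset E_\vr$ at the minimax level $c_\vr$ of \eqref{m1}. Since $F$ vanishes on $(-\infty,0]$ and $\||u_n|\|_\vr\le\|u_n\|_\vr$, replacing each $u_n$ by $|u_n|$ does not increase $I_\vr$ and one checks it still produces a $(PS)_{c_\vr}$ sequence, so we may and do assume $u_n\ge 0$; this nonnegativity is what makes Lemma \ref{PS} applicable.

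Next I would invoke Lemma \ref{PS}: because $\vr\in[0,\vr_0)$, the functional $I_\vr$ satisfies the $(PS)_{c_\vr}$ condition, hence, along a subsequence, $u_n\to u_\vr$ strongly in $E_\vr$, where $u_\vr$ is the weak limit. Since $I_\vr\in\mathcal C^1(E_\vr)$ and $I_\vr'$ is continuous — this being a consequence of the generalization of Lions' lemma invoked in Section 2 to show that $\Phi_W$ is $\mathcal C^1$ — strong convergence lets us pass to the limit in $I_\vr(u_n)\to c_\vr$ and $I_\vr'(u_n)\to0$, giving $I_\vr(u_\vr)=c_\vr$ and $I_\vr'(u_\vr)=0$. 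Thus $u_\vr$ is a nonnegative critical point of $I_\vr$ at level $c_\vr$, i.e. a weak solution of $(SNS^*)$.

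Finally, $u_\vr\neq 0$: otherwise $c_\vr=I_\vr(u_\vr)=I_\vr(0)=0$, contradicting $c_\vr>c>0$. Hence $u_\vr\in{\cal N}_\vr$, and by the Nehari characterization \eqref{m2} we get $I_\vr(u_\vr)=c_\vr=\inf_{{\cal N}_\vr}I_\vr$, so $u_\vr$ is a solution of least energy of $(SNS^*)$; undoing the scaling $u(x)=v(\vr x)$ it provides a ground state of \eqref{EC}, and $u_\vr>0$ follows from elliptic regularity and the strong maximum principle. Since every ingredient has already been established, there is no genuine obstacle in this statement; the only points deserving a word of care are the $\mathcal C^1$-regularity together with the continuity of $I_\vr'$ (so that criticality survives the strong limit), and the reduction to a nonnegative $(PS)$ sequence required by Lemma \ref{PS}.
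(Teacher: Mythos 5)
Your proof takes essentially the same route as the paper, which presents the corollary without a written argument, as ``an immediate consequence of Lemma~\ref{PS}'': Mountain Pass geometry plus Ekeland produce a $(PS)_{c_\vr}$ sequence, Lemma~\ref{PS} upgrades weak to strong convergence and yields a nonzero critical point $u_\vr$ at level $c_\vr$, and the Nehari characterization \eqref{m2} identifies it as a least-energy solution of $(SNS^*)$.

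One small imprecision worth repairing: the reduction to a nonnegative $(PS)$ sequence is not obtained by noting that passing from $u_n$ to $|u_n|$ does not increase $I_\vr$ — that observation controls neither the level nor $\|I_\vr'(|u_n|)\|_{E_\vr^*}$. The clean route is to test $I_\vr'(u_n)$ against $u_n^-=\min\{u_n,0\}$: since $f$ vanishes on $(-\infty,0]$, the nonlocal term drops out and one gets $\|u_n^-\|_\vr^2=\langle I_\vr'(u_n),u_n^-\rangle\to 0$, so $u_n^+=u_n-u_n^-$ is again a $(PS)_{c_\vr}$ sequence and is nonnegative. This is a standard fix and does not affect the validity of your argument.
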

\section{Concentration phenomena: proof of Theorem \ref{T1} completed}

In this section our goal is to establish the concentration phenomenon fro ground state solutions of the singularly perturbed equation $(SNS^*)$ . For this purpose, the following technical lemma will play a fundamental role.
\begin{lem}\label{BNT1}
Suppose that assumptions $(f_1)$ and $(f_2)$ hold. If $h \in H^{1}(\mathbb{R}^{2})$, then the function
$
\frac{1}{|x|^{\mu}}\ast F(h)
$
belongs to $L^{\infty}(\mathbb{R}^{2})$.
\end{lem}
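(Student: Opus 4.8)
The plan is to reduce the statement to H\"older/Young estimates, after splitting the Riesz kernel $|x|^{-\mu}$ into a singular part supported near the origin and an integrable tail. The two ingredients are: (a) $F(h)$ lies in $L^q(\R^2)$ for a whole range of exponents $q$, and (b) $|x|^{-\mu}\chi_{B_1}$ and $|x|^{-\mu}\chi_{\R^2\setminus B_1}$ lie in complementary Lebesgue scales whose conjugates straddle the exponent range from (a).

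First I would establish the integrability of $F(h)$. From $(f_2)$ there are $C_1,\delta>0$ with $F(s)\le C_1 s^{p+1}$ for $0\le s\le\delta$, while the remarks following $(f_1)$ give $\gamma>0$ with $F(s)\le e^{\gamma s^2}-1$ for all $s>0$; adding the complementary nonnegative terms yields $F(s)\le C_1 s^{p+1}+(e^{\gamma s^2}-1)$ for every $s\ge0$. Raising this to a power $t\ge1$ and using $(a+b)^t\le 2^{t-1}(a^t+b^t)$ together with $(e^\tau-1)^t\le e^{t\tau}-1$ for $\tau\ge0$, one gets $F(h)^t\le C\bigl(|h|^{(p+1)t}+e^{t\gamma h^2}-1\bigr)$. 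Since $h\in H^1(\R^2)\hookrightarrow L^r(\R^2)$ for all $r\ge2$ and $(p+1)t>2$ whenever $t\ge2$ (recall $p>\tfrac{2-\mu}{2}>0$, so $p+1>1$), the first term is integrable for $t\ge2$, and the second is integrable for every $t\ge1$ by the first part of Lemma \ref{Trudinger-Moser}. Hence $F(h)\in L^t(\R^2)$ for all $t\ge2$; combining with $F(h)\in L^{4/(4-\mu)}(\R^2)$ (already observed, using $(f_2)$) and interpolating gives
\[
F(h)\in L^t(\R^2)\qquad\text{for every } t\in\Bigl[\tfrac{4}{4-\mu},\,\infty\Bigr).
\]

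Next I would split $|z|^{-\mu}=|z|^{-\mu}\chi_{B_1}(z)+|z|^{-\mu}\chi_{\R^2\setminus B_1}(z)$, noting $|z|^{-\mu}\chi_{B_1}\in L^{a}(\R^2)$ exactly for $1\le a<2/\mu$ and $|z|^{-\mu}\chi_{\R^2\setminus B_1}\in L^{b}(\R^2)$ exactly for $2/\mu<b\le\infty$. For $x\in\R^2$,
\[
\Bigl(\tfrac{1}{|x|^{\mu}}\ast F(h)\Bigr)(x)=\int_{|x-y|\le1}\frac{F(h)(y)}{|x-y|^{\mu}}\,dy+\int_{|x-y|>1}\frac{F(h)(y)}{|x-y|^{\mu}}\,dy .
\]
Applying H\"older's inequality with a pair $(q_2,q_2')$ to the first integral and with a pair $(q_1,q_1')$ to the second, and using translation invariance, the right-hand side is bounded by $\||z|^{-\mu}\chi_{B_1}\|_{q_2'}\,\|F(h)\|_{q_2}+\||z|^{-\mu}\chi_{\R^2\setminus B_1}\|_{q_1'}\,\|F(h)\|_{q_1}$, which is finite and \emph{independent of} $x$ as soon as $q_2'<2/\mu$ (i.e. $q_2>\tfrac{2}{2-\mu}$), $q_1'>2/\mu$ (i.e. $q_1<\tfrac{2}{2-\mu}$), and $F(h)\in L^{q_1}\cap L^{q_2}$. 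Since $\tfrac{4}{4-\mu}<\tfrac{2}{2-\mu}$ (cross-multiply: $8-4\mu<8-2\mu$), one may take $q_1=\tfrac{4}{4-\mu}$ and any $q_2\in\bigl(\tfrac{2}{2-\mu},\infty\bigr)$; both lie in $[\tfrac{4}{4-\mu},\infty)$, so $F(h)\in L^{q_1}\cap L^{q_2}$ by the previous step, and the uniform bound follows.

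The only delicate point is the exponent bookkeeping at the borderline value $2/(2-\mu)$ attached to the kernel: one needs $F(h)$ in $L^q$ for some $q$ strictly below and some $q$ strictly above $2/(2-\mu)$, and this is exactly where the subcritical polynomial behaviour of $F$ near $0$ (assumption $(f_2)$, which forces $p>\tfrac{2-\mu}{2}$ and hence $\tfrac{4}{4-\mu}<\tfrac{2}{2-\mu}$) and the critical exponential growth at infinity (controlled through Lemma \ref{Trudinger-Moser}, which gives $F(h)\in L^t$ for all large $t$) must be used together. No compactness or concentration argument is needed; everything reduces to H\"older's inequality once these two integrability facts are in place.
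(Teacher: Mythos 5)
Your proof is correct and follows essentially the same route as the paper's: split the Riesz kernel into a singular part on $B_1$ and an integrable tail on $B_1^c$, control $F(h)$ by a subcritical power near the origin (via $(f_2)$, using $p>\frac{2-\mu}{2}$) together with a Trudinger--Moser--integrable exponential at infinity, and finish with H\"older. The only difference is organizational: you front-load the fact that $F(h)\in L^t(\R^2)$ for all $t\in[\frac{4}{4-\mu},\infty)$ and then apply H\"older once to each piece of the kernel, while the paper first decomposes $F(s)\leq C(|s|^{(4-\mu)/2}+|s|[e^{\beta 4\pi s^2}-1])$ and estimates the four resulting integrals individually; the exponent bookkeeping is identical in both.
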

\begin{proof}
For $\beta>1$, there exists $C_0>0$ such that
$$
F(s)\leq C_0\Big( |s|^{\frac{4-\mu}{2}}+|s|\big[ e^{\beta4\pi s ^{2}}
-1 \big]\Big), \forall s\in \R.
$$
Then,
$$
\aligned
\big|\frac{1}{|x|^{\mu}}\ast F(h)\big|&=\Big|\int_{\R^2}\frac{F(h)}{|x-y|^\mu}\Big|\\
&=\Big|\int_{|x-y|\leq1}\frac{F(h)}{|x-y|^\mu}\Big|+C\Big|\int_{|x-y|\geq1}\frac{F(h)}{|x-y|^\mu}\Big|\\
&\leq \int_{|x-y|\leq1}\frac{|h|^{\frac{4-\mu}{2}}+|h|\big[ e^{\beta4\pi |h|^{2}}
-1 \big]}{|x-y|^\mu}\\
&\hspace{5mm}+C\int_{|x-y|\geq1}\Big(\frac{|h|^{\frac{4-\mu}{2}}}{|x-y|^{\mu}}+|h|\big[ e^{\beta4\pi |h|^{2}}
-1 \big]\Big).
\endaligned
$$
Since
$$
\frac{1}{|y|^{\mu}} \in L^{\frac{2+\delta}{\mu}}(B_1^{c}(0)), \,\,\,\, \forall ~~ \delta >0,
$$
take $\delta \approx 0^{+}$ such that
$$
q_{1,\delta}=\frac{(4-\mu)}{2}\frac{(2+\delta)}{(2+\delta)-\mu}>2.
$$
Using H\"{o}lder inequality, we get
$$
\int_{|x-y|\geq1}\frac{|h|^{\frac{4-\mu}{2}}}{|x-y|^\mu} \leq C_0\left(\int_{|x-y|\geq 1}|h|^{q_{1,\delta}} \right)^{\frac{(2+\delta)-\mu}{2+\delta}}=C_1.
$$
On the other hand,  by Lemma \ref{Trudinger-Moser}

$$
e^{2\beta 4 s\pi |h|^{2}}-1 \in L^{1}(\mathbb{R}^{2}), \quad \forall s \geq 1,
$$

\noindent Again by H\"older's inequality
$$
\int_{|x-y|\geq1}|h|\big[ e^{\beta4\pi |h|^{2}}
-1 \big]\leq |h|_2\int_{\R^2}\Big(\big[ e^{2\beta 4\pi\frac{ |h|^{2}}{\|h\|_{\vr}^2}}
-1 \big]\Big)^{\frac12}\leq C_2.
$$
for some positive constant $C_2$.

\noindent Choosing  $t\in (\frac{2}{2-\mu}, +\infty)$, we have that $\frac{(4-\mu)t}{2}>2$ and $1-\frac{t\mu}{t-1}>-1$. Then, from H\"{o}lder's inequality
$$
\aligned
\int_{|x-y|\leq1}\frac{|h|^{\frac{4-\mu}{2}}}{|x-y|^\mu}&\leq \left(\int_{|x-y|\leq1}|h|^{\frac{(4-\mu)t}{2}}\right)^{\frac1t}\left(\int_{|x-y|\leq1}\frac{1}{|x-y|^{\frac{t\mu}{t-1}}}\right)^{\frac{t-1}{t}}\\
&\leq  C_2\left(\int_{|r|\leq1}{|r|^{1-\frac{t\mu}{t-1}}}dr\right)^{\frac{t-1}{t}} =C_3.\\
\endaligned
$$
Furthermore,  using again Lemma \ref{Trudinger-Moser}, we get
$$\aligned
\int_{|x-y|\leq1}&\frac{|h|\big[ e^{\beta4\pi |h|^{2}}
-1 \big]}{|x-y|^\mu}\\
&\leq \big(\int_{|x-y|\leq1}|h|\big[ e^{\beta4\pi |h|^{2}}
-1 \big]|^{t}\big)^{\frac1t}\big(\int_{|x-y|\leq1}\frac{1}{|x-y|^{\frac{t\mu}{t-1}}}\big)^{\frac{t-1}{t}}\\
&\leq  \big(\int_{|x-y|\leq1}|h|^{2t}\big)^{\frac{1}{2t}}\big(\int_{|x-y|\leq1}\big[ e^{ 2\beta t 4\pi |h|^{2}}
-1 \big]\big)^{\frac{1}{2t}}\big(\int_{|r|\leq1}{|r|^{1-\frac{t\mu}{t-1}}}dr\big)^{\frac{t-1}{t}}\\
&\leq C_4.
\endaligned
$$
\noindent Joining the above estimates the lemma follows.
\end{proof}

\begin{Prop}\label{Seq}
Let $\vr_n\to0$ and $\{u_n\}$ be the sequence of solutions obtained in Corollary \ref{Existence}.
Then, there exists a sequence $\{{y}_n\}\subset \R^2$, such that $v_n=u_n(x+{y}_n)$ has a convergent subsequence in $E$. Moreover, up to a subsequence, $y_n\to y\in M$.
\end{Prop}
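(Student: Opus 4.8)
The plan is to run a concentration--compactness argument for a translated sequence, using the estimates already in place for Lemmas~\ref{lem-PS} and~\ref{PS} together with an energy comparison that pins down the concentration point. First I would record uniform bounds: since $I_{\vr_n}(u_n)=c_{\vr_n}$ and $I_{\vr_n}'(u_n)=0$, applying $(f_3)$ to $I_{\vr_n}(u_n)-\frac1{2K}\langle I_{\vr_n}'(u_n),u_n\rangle=c_{\vr_n}$ and recalling $c_{\vr_n}\to m_{V_0}$ (Lemma~\ref{EML}) shows $\{u_n\}$ is bounded in $E$, with $\int_{\R^2}[\frac1{|x|^\mu}\ast F(u_n)]F(u_n)$ and $\int_{\R^2}[\frac1{|x|^\mu}\ast F(u_n)]f(u_n)u_n$ bounded. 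Then I would rule out vanishing exactly as in the Claim inside the proof of Lemma~\ref{PS}: if $\sup_{y}\int_{B_r(y)}|u_n|^2\to0$ for some $r>0$, Lions' lemma gives $u_n\to0$ in every $L^q$, $q>2$; the Section~2 estimates used for Theorem~\ref{thm-Existence} then force $\int_{\R^2}[\frac1{|x|^\mu}\ast F(u_n)]F(u_n)\to0$ (the ``large'' part of $F$ absorbed by $(f_1)(ii)$, the ``small'' part by the subcriticality $p>\frac{2-\mu}2$ of $(f_2)$), hence $\|u_n\|_{\vr_n}^2\to 2m_{V_0}<\frac{4-\mu}4$; and then Lemma~\ref{Trudinger-Moser} together with the Adachi--Tanaka/do~\'{O} inequality on that sublevel give $\int_{\R^2}[\frac1{|x|^\mu}\ast F(u_n)]f(u_n)u_n\to0$, so $\|u_n\|_{\vr_n}\to0$ and $c_{\vr_n}\to0$, contradicting $c_{\vr_n}\to m_{V_0}>0$. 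Thus there are $\beta,\tilde R>0$ and $\{y_n\}\subset\R^2$ with $\int_{B_{\tilde R}(y_n)}|u_n|^2\ge\beta$; setting $v_n=u_n(\cdot+y_n)$, up to a subsequence $v_n\rightharpoonup v$ in $E$, $v_n\to v$ in $L^q_{loc}$ and a.e., with $v\ge0$ and $v\not\equiv0$.

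Next I would exploit the translation-invariant identity coming from $\langle I_{\vr_n}'(u_n),u_n\rangle=0$, namely $c_{\vr_n}=\frac12\iint_{\R^2\times\R^2}|x-y|^{-\mu}F(v_n(y))[f(v_n(x))v_n(x)-F(v_n(x))]$, whose integrand is nonnegative by $(f_1)$ and $(f_3)$; Fatou's lemma on $\R^2\times\R^2$ gives
\begin{equation}\label{SQ}
m_{V_0}=\lim_n c_{\vr_n}\ \ge\ \frac12\iint_{\R^2\times\R^2}|x-y|^{-\mu}F(v(y))[f(v(x))v(x)-F(v(x))].
\end{equation}
Then I would check that $\{\vr_n y_n\}$ is bounded. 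If $\vr_n|y_n|\to\infty$ along a subsequence, then $\vr_n x+\vr_n y_n\to\infty$ for a.e.\ fixed $x$, so $\liminf_n V(\vr_n x+\vr_n y_n)\ge V_\infty$ by $(V_2)$; choosing $t_n>0$ with $t_nv_n\in{\cal N}_{V_\infty}$ (bounded, for otherwise Fatou applied to the Nehari identity $\int_{\R^2}(|\nabla v_n|^2+V(\vr_n x+\vr_n y_n)v_n^2)=\mathfrak{F}'(v_n)[v_n]$ would force $\mathfrak{F}'(v_n)[v_n]\to\infty$, impossible by the bounds above), and running Steps~1--3 of the proof of Lemma~\ref{PS} with $\{v_n\}$ and the potential $V(\vr_n\cdot+\vr_n y_n)$ in place of $\{u_n\}$ and $V(\vr_n\cdot)$ — the monotonicity of $s\mapsto\frac12\mathfrak{F}'(sv_n)[sv_n]-\mathfrak{F}(sv_n)$ used there being guaranteed by $(f_5)$ — one is led to $\limsup_n c_{\vr_n}\ge m_{V_\infty}$, contradicting $c_{\vr_n}\to m_{V_0}<m_{V_\infty}$ (Lemma~\ref{EML}).

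Hence, along a subsequence, $\vr_n y_n\to y_0\in\R^2$, and $V(\vr_n\cdot+\vr_n y_n)\to V(y_0)$ uniformly on compacta. Passing to the limit in the weak formulation of $(SNS^*)$ satisfied by $v_n$ — the nonlocal term handled exactly as in \eqref{convFF}--\eqref{weak*conv} of Lemma~\ref{lem-PS}, which uses only the $L^1$-bounds above and a.e.\ convergence — shows that the nontrivial $v$ is a weak solution of $-\Delta v+V(y_0)v=[\frac1{|x|^\mu}\ast F(v)]f(v)$. Thus $v\in{\cal N}_{V(y_0)}$, so $\frac12\iint_{\R^2\times\R^2}|x-y|^{-\mu}F(v(y))[f(v(x))v(x)-F(v(x))]=\Phi_{V(y_0)}(v)\ge m_{V(y_0)}\ge m_{V_0}$, the last inequality since $V(y_0)\ge V_0$ and $W_0\mapsto m_{W_0}$ is strictly increasing. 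Combined with \eqref{SQ} all these are equalities, so $m_{V(y_0)}=m_{V_0}$, forcing $V(y_0)=V_0$, i.e.\ $y_0\in M$. Moreover these equalities make every Fatou/weak-lower-semicontinuity estimate sharp: $\int_{\R^2}|\nabla v_n|^2\to\int_{\R^2}|\nabla v|^2$ and $\int_{\R^2}V(\vr_n\cdot+\vr_n y_n)v_n^2\to V_0\int_{\R^2}v^2$; since $V(\vr_n\cdot+\vr_n y_n)\ge V_0$ and tends pointwise to $V_0$, a standard squeeze gives $\int_{\R^2}v_n^2\to\int_{\R^2}v^2$ as well, and since $v_n\rightharpoonup v$ in the Hilbert space $E$ it follows that $v_n\to v$ strongly in $E$, with $\vr_n y_n\to y_0\in M$.

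The step I expect to be the main obstacle is excluding the escape $\vr_n|y_n|\to\infty$: since $V(\vr_n\cdot+\vr_n y_n)$ need not converge pointwise there, the ${\cal N}_{V_\infty}$-rescaling of Lemma~\ref{PS} must be reproduced in the translated frame with only $\liminf$-control from below of $\int_{\R^2}V(\vr_n x+\vr_n y_n)v_n^2$, which is the kind of book-keeping (splitting the mass far from and near the runaway center) that requires care; and once $\vr_n y_n\to y_0$, genuinely recognizing $v$ as a solution of the limit problem relies on the full weak-convergence machinery for the nonlocal term with critical exponential growth (Lemmas~\ref{lem-PS} and~\ref{Trudinger-Moser}). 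The strict monotonicity of $W_0\mapsto m_{W_0}$ is precisely what turns the energy identity $m_{V(y_0)}=m_{V_0}$ into $y_0\in M$.
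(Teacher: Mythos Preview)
Your outline is mostly sound, but the order in which you attack boundedness of $\{\vr_n y_n\}$ versus strong convergence creates a genuine gap, and it is exactly the step you flag as ``the main obstacle''. You propose to exclude $\vr_n|y_n|\to\infty$ by projecting onto $\mathcal N_{V_\infty}$ and ``running Steps~1--3 of Lemma~\ref{PS}''. Those steps, however, rely on the weak limit being \emph{zero} in the working frame: that is what makes $\int_{B_R}|u_n|^2\to 0$ and allows the splitting $\int(V_\infty-V(\vr\cdot))|u_n|^2\le \zeta C+o_n(1)$. In your translated frame $v_n\rightharpoonup v\neq 0$, so the local piece does \emph{not} vanish; the defect moves to the tail $\int_{|x|>R_n}|v_n|^2$ over the region where you have no lower control on $V(\vr_n\cdot+\vr_n y_n)$, and without tightness (i.e.\ without strong convergence) you cannot show that this tail is small. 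Thus Steps~1--3 do not transfer to your situation, and the contradiction $\limsup c_{\vr_n}\ge m_{V_\infty}$ is not established.

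The paper avoids this by reversing the order and, crucially, projecting onto $\mathcal N_{V_0}$ rather than $\mathcal N_{V_\infty}$: with $t_n>0$ chosen so that $\tilde v_n:=t_nv_n\in\mathcal N_{V_0}$, the global inequality $V_0\le V(\vr_n x)$ gives
\[
m_{V_0}\le \Phi_{V_0}(\tilde v_n)=\Phi_{V_0}(t_nu_n)\le I_{\vr_n}(t_nu_n)\le I_{\vr_n}(u_n)=c_{\vr_n}\to m_{V_0},
\]
so $\{\tilde v_n\}\subset\mathcal N_{V_0}$ is minimizing; by Ekeland it is a $(PS)_{m_{V_0}}$ sequence with nontrivial weak limit, and the Fatou/``$\tfrac1{2K}$'' argument at the end of Lemma~\ref{PS} yields $\tilde v_n\to\tilde v$ strongly, hence $v_n\to v$ strongly. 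Only \emph{after} this tightness is in hand does the paper prove $\{\vr_n\tilde y_n\}$ bounded and $y\in M$, via the energy comparison with $V_\infty$ you had in mind; strong convergence is exactly what lets $\int V(\vr_n x+y_n)|\tilde v_n|^2$ be controlled against $V_\infty\int|\tilde v|^2$ by Fatou. In short, your $\mathcal N_{V_\infty}$-first route needs tightness you do not yet have; switching to $\mathcal N_{V_0}$ and securing strong convergence first closes the gap.
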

\begin{proof}
Let $\{u_n\}$ be the sequence of solutions obtained in Corollary \ref{Existence}, it is easy to see $c_{\vr_n}=I_{\vr_n}(u_n)\to m_{V_0}$, $\{u_n\}$ is bounded in $E$ and
$$
0<m_{V_0} = \limsup_{n\to\infty}c_{\vr_n}<\frac{(4-\mu)}{8}.
$$
By following the argument in the proof of Theorem \ref{thm-Existence} in Section 2, there exist
$r, \delta>0$ and $\tilde{y}_n\in \R^2$ such that
\begin{equation} \label{B1'}
\liminf_{n\to\infty}\int_{B_r(\tilde{y}_n)}|u_n|^2\geq\delta.
\end{equation}
  Setting  $v_{n}(x)=u_{n}(x+\tilde{y}_{n})$, up to a subsequence, if necessary, we may assume  $v_{n}\rightharpoonup v\not\equiv0$ in $E$. Let $t_{n}>0$ be such that $\tilde{v}_{n}=t_{n}v_{n}\in \mathcal{N}_{V_{0}}$. Then,
$$
m_{V_0}\leq \Phi_{V_{0}}(\tilde{v}_{n})=\Phi_{V_{0}}(t_{n}u_{n})\leq I_{\vr}(t_{n}u_{n})\leq I_{\vr}(u_{n})\to m_{V_0}
$$
and so,
$$
\Phi_{V_{0}}(\tilde{v}_{n})\rightarrow m_{V_0}\ \hbox{and}\  (\tilde{v}_{n})\subset\mathcal{N}_{V_{0}}.
$$
Then the sequence $\{\tilde{v}_{n}\}$ is a minimizing sequence, and by the Ekeland Variational Principle \cite{E}, we may also assume it is a bounded $(PS)$ sequence at $m_{V_0}$. Thus, for some subsequence, $\tilde{v}_{n}\rightharpoonup \tilde{v}$ weakly in $E$ with $\tilde{v}\neq 0$ and $\Phi'_{V_{0}}(\tilde{v})=0$. Repeating the same arguments used in the proof of Lemma \ref{PS}, we have that $\tilde{v}_{n}\rightarrow \tilde{v}\ \ \hbox{in}\ \ E.$ Since $(t_n)$ is bounded, we can assume that for some subsequence $t_{n}\rightarrow t_{0}>0$, and so $v_{n}\rightarrow v$ in $E$.

\noindent Next we will show that $\{y_{n}\}=\{\vr_{n}\tilde{y}_{n}\}$ has a subsequence satisfying $y_{n}\rightarrow y\in M$. We begin with proving that
$\{y_{n}\}$ is bounded in $\mathbb{R}^2$. Indeed, if not there would exist a subsequence, which we still denote by $\{y_{n}\}$, such that $|y_{n}|\rightarrow\infty$. Since $\tilde{v}_{n}\rightarrow \tilde{v}$ in $E$ and $V_{0}<V_{\infty}$, we have
$$
\aligned
m_{V_{0}}&=\frac{1}{2}\int_{\mathbb{R}^{2}}|\nabla \tilde{v}|^{2}+\frac{1}{2}\int_{\mathbb{R}^{2}}V_{0}| \tilde{v}|^{2}-\mathfrak{F}(\tilde{v})\\
&<\frac{1}{2}\int_{\mathbb{R}^{2}}|\nabla \tilde{v}|^{2}+\frac{1}{2}\int_{\mathbb{R}^{2}}V_{\infty}| \tilde{v}|^{2}-\mathfrak{F}(\tilde{v})\\
&\leq\liminf_{n\rightarrow\infty}\left[\frac{1}{2}\int_{\mathbb{R}^{2}}|\nabla \tilde{v}_{n}|^{2}+\frac{1}{2}\int_{\mathbb{R}^{2}}V(\epsilon_{n}x+y_{n})| \tilde{v}_{n}|^{2}-\mathfrak{F}(\tilde{v}_{n})\right]\\
&=\liminf_{n\rightarrow\infty}\left[\frac{t_{n}^{2}}{2}\int_{\mathbb{R}^{2}}|\nabla u_{n}|^{2}+\frac{t_{n}^{2}}{2}\int_{\mathbb{R}^{2}}V(\epsilon_{n}x)| u_{n}|^{2}-\mathfrak{F}(t_{n}^{2}u_{n})\right]\\
&=\liminf_{n\rightarrow\infty}I_{\vr_n}(t_nu_n)\\
&\leq \liminf_{n\rightarrow\infty}I_{\vr_n}(u_n)\\
&=m_{V_{0}}
\endaligned
$$
hence the absurd which shows that $\{y_{n}\}$ stays bounded and up to a subsequence, $y_n\to y\in\R^2$. Then, necessarily  $y\in M$ otherwise we would get again a contradiction as above.
\end{proof}

\noindent Let $\vr_n\to 0$ as $n\to \infty$, $u_n$ be the ground state solution of
$$
\aligned &-\Delta u +V(\vr_nx)u  =\Big[\frac{1}{|x|^{\mu}}\ast  F(u)\Big]f(u) \quad \mbox{in} \quad \R^2 .
\endaligned
 $$
From Lemma \ref{EML} we know
$$
 I_{\vr_n}(u_n)\to m_{V_0}.
$$
Then, there exists a sequence $\tilde{{y}}_n\in \R^2$, such that $v_n=u_n(x+\tilde{{y}}_n)$ is a solution of
$$
-\Delta v_n +V_n(x)v_n=\Big[\frac{1}{|x|^{\mu}}\ast  F(v_n)\Big]f(v_n), \quad \mbox{in} \quad \R^2,
$$
where $V_{n}(x)=V(\vr_{n}x+\vr_{n}\tilde{{y}}_{n})$. Moreover, $(v_n)$ has a convergent subsequence in $E$ and  ${y}_n\to y\in M$, up to a subsequence, where ${y}_n=\vr_n \tilde{{y}}_n$. Hence, there exists $h \in H^{1}(\mathbb{R}^{2})$ such that
\begin{equation} \label{h}
|v_n(x)| \leq h(x) \quad \mbox{a.e in} \quad \mathbb{R}^{2} \quad \forall n \in \mathbb{N}.
\end{equation}

\begin{lem}
 Suppose that conditions $(f_1)-(f_5)$, $(V_1)$ and $(V_2)$ hold. Then there exists $C>0$ such that $\|v_{n}\|_{L^{\infty}(\mathbb{R}^2)}\leq C$ for all $n\in \mathbb{N}$. Furthermore
$$
\lim_{|x|\rightarrow \infty}v_{n}(x)=0\ \hbox{uniformly in}\ n\in \mathbb{N}.
$$
\end{lem}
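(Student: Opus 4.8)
The plan is to rewrite the equation for $v_n$ as one whose right‑hand side is uniformly bounded in some $L^{q}(\R^2)$, $q>1$, and then to combine Calderón–Zygmund regularity for $-\Delta+V_0$ with a comparison argument. First I would exploit that, by \eqref{h}, $0\le v_n\le h$ with $h\in H^{1}(\R^2)$, and that $f\ge0$ with $f\equiv0$ on $(-\infty,0]$ by $(f_1)$, so that $F$ is nondecreasing and $0\le F(v_n)\le F(h)$ pointwise. Hence
$$
0\le \frac{1}{|x|^{\mu}}\ast F(v_n)\le \omega:=\frac{1}{|x|^{\mu}}\ast F(h),\qquad \omega\in L^{\infty}(\R^2),
$$
the membership $\omega\in L^\infty$ being exactly Lemma \ref{BNT1} applied to $h$. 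Splitting $\omega(x)=\int_{|x-y|\le1}(\cdot)\,dy+\int_{|x-y|>1}(\cdot)\,dy$ and using Hölder's inequality together with $F(h)\in L^{r}(\R^2)$ for every $r\ge\frac{4}{4-\mu}$ (which follows from \eqref{estimate-F} and $h\in H^{1}$), I would also check that $\omega(x)\to0$ as $|x|\to\infty$ and that $\omega\in L^{s}(\R^2)$ for some $s>2$. Since $V_{n}(x)=V(\vr_nx+\vr_n\tilde y_n)\ge V_0$ by $(V_1)$, each $v_n$ is a nonnegative weak subsolution of $-\Delta v_n+V_0 v_n\le g_n$, where $g_n:=\big[\tfrac{1}{|x|^{\mu}}\ast F(v_n)\big]f(v_n)$ satisfies $0\le g_n\le\|\omega\|_{\infty}f(v_n)$.

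Next I would estimate $g_n$ in $L^{q}$. From $(f_1)(i)$ and $(f_2)$ there is, for each fixed $\beta>1$, a constant $C$ with $f(s)\le C s^{p}\chi_{\{0\le s\le1\}}+C\big(e^{4\pi\beta s^{2}}-1\big)$, hence $g_n\le\|\omega\|_{\infty}\big(Cv_n^{p}\chi_{\{v_n\le1\}}+C(e^{4\pi\beta v_n^{2}}-1)\big)$. Fixing $q=\max\{2/p,2\}>1$ one has $\int_{\{v_n\le1\}}v_n^{pq}\le\int_{\R^2}v_n^{2}\le C$, while the elementary inequality $(e^{x}-1)^{q}\le e^{qx}-1$ gives $\int_{\R^2}(e^{4\pi\beta v_n^{2}}-1)^{q}\le\int_{\R^2}(e^{4\pi\beta q v_n^{2}}-1)$. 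Here enters the decisive use of the strong convergence $v_n\to v$ in $H^{1}(\R^2)$: writing $v_n=v^{R}+z_n$ with $v^{R}$ the truncation of $v$ at height $R$ and $z_n=(v-v^{R})+(v_n-v)$, one has $\|\nabla z_n\|_2$ as small as one pleases once $R$ and $n$ are large, and bounding $v_n^{2}\le(1+\lambda)(v^R)^2+(1+\tfrac1\lambda)z_n^{2}$, writing $e^{XY}-1=X(Y-1)+(X-1)$ and invoking Lemma \ref{Trudinger-Moser} for the rescaled function $z_n/\|\nabla z_n\|_2$ (whose $L^{2}$ norm stays bounded for each fixed $R$), one obtains $\sup_n\int_{\R^2}(e^{\gamma v_n^{2}}-1)<\infty$ for every $\gamma>0$. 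Consequently $\sup_n\|g_n\|_{L^{q}(\R^2)}<\infty$, and, using $(f_2)$ (i.e. $p>\frac{2-\mu}{2}$) together with $\omega\in L^{s}$, $s>2$, also $\sup_n\|g_n\|_{L^{2}(\R^2)}<\infty$.

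Set now $\zeta_n:=(-\Delta+V_0)^{-1}g_n$. Calderón–Zygmund estimates for $-\Delta+V_0$ on $\R^2$ give $\|\zeta_n\|_{W^{2,q}(\R^2)}\le C\|g_n\|_{L^{q}}\le C$ and $\zeta_n\in H^{2}(\R^2)$; since $q>1$, the embedding $W^{2,q}(\R^2)\hookrightarrow L^{\infty}(\R^2)$ gives $\|\zeta_n\|_{\infty}\le C$. Because $-\Delta(v_n-\zeta_n)+V_0(v_n-\zeta_n)\le0$ and $(v_n-\zeta_n)^{+}\in H^{1}(\R^2)$, testing this inequality with $(v_n-\zeta_n)^{+}$ forces $(v_n-\zeta_n)^{+}=0$, so $0\le v_n\le\zeta_n$ and $\|v_n\|_{L^{\infty}(\R^2)}\le C$ for all $n$. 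For the uniform decay, put $M:=\sup_n\|v_n\|_{\infty}$; then $g_n\le\big(\max_{[0,M]}f\big)\,\omega=:C_M\omega$, and if $\psi:=(-\Delta+V_0)^{-1}(C_M\omega)=G\ast(C_M\omega)$, where $G>0$ is the integrable convolution kernel of $(-\Delta+V_0)^{-1}$ on $\R^2$, then $\psi\in C_{0}(\R^2)$ because $\omega\in L^{\infty}(\R^2)$ and $\omega(x)\to0$ at infinity (split $\psi(x)$ into $|x-y|\le R$ and $|x-y|>R$ and let $R\to\infty$). The comparison principle once more gives $0\le v_n\le\psi$, whence $v_n(x)\to0$ as $|x|\to\infty$ uniformly in $n$.

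The only genuinely delicate step is the uniform exponential integrability $\sup_n\int_{\R^2}(e^{\gamma v_n^{2}}-1)<\infty$ used in the second paragraph. It cannot be obtained by applying Lemma \ref{Trudinger-Moser} to $v_n$ directly, since the Dirichlet norms $\|\nabla v_n\|_2$ need not lie below the Trudinger–Moser threshold; it has to be extracted from the strong convergence $v_n\to v$ in $H^{1}(\R^2)$ via the truncation–rescaling device sketched above — in essence the continuity of the map $u\mapsto e^{\gamma u^{2}}-1$ from $H^{1}(\R^2)$ into $L^{1}(\R^2)$, in the spirit of \cite[Lemma 2.1]{DMR}. Everything else reduces to routine elliptic regularity and maximum‑principle comparisons.
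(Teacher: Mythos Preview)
Your proposal is correct but follows a different route from the paper. The paper runs a Moser iteration: with $W_n:=|x|^{-\mu}\ast F(v_n)\le |x|^{-\mu}\ast F(h)\in L^\infty$ (via Lemma~\ref{BNT1}), it tests the equation against $\eta^2 v_{L,n}^{2(\gamma-1)}v_n$, controls the nonlinear term through the uniform bound $\int(e^{\beta 4\pi v_n^2}-1)^s\le\int(e^{\beta 4\pi h^2}-1)^s<\infty$, and invokes the iteration of \cite{AF} to get $|v_n|_{L^\infty(|x|\ge R)}\le C|v_n|_{L^p(|x|\ge R/2)}$, from which both the uniform $L^\infty$ bound and the uniform decay follow via $v_n\to v$ in $E$. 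Your approach instead bounds the full right-hand side $g_n$ in $L^q$, invokes Calder\'on--Zygmund for $-\Delta+V_0$ to produce a supersolution $\zeta_n\in W^{2,q}\hookrightarrow L^\infty$, and concludes by the comparison principle; the decay then comes from a single barrier $\psi=G\ast(C_M\omega)$. This avoids the iteration machinery and is more self-contained, at the cost of leaning on global elliptic regularity and the positivity of the Bessel kernel.

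One simplification you should make: the ``genuinely delicate step'' of uniform exponential integrability is actually trivial here, and the truncation--rescaling device via $v_n\to v$ in $H^1$ is unnecessary. You already use the pointwise domination $0\le v_n\le h$ from \eqref{h} to bound the convolution by $\omega$; the very same inequality gives
\[
\int_{\R^2}\big(e^{\gamma v_n^2}-1\big)\le \int_{\R^2}\big(e^{\gamma h^2}-1\big)<\infty\qquad\text{for every }\gamma>0,
\]
directly by \eqref{TM1} in Lemma~\ref{Trudinger-Moser}, since $h\in H^1(\R^2)$ is a fixed function. This is exactly how the paper handles the point (see the display labelled \eqref{E2} in the paper's proof). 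With this shortcut your $L^q$ bound on $g_n$ is immediate, and the rest of your argument goes through unchanged.
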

\begin{proof}
Let us first show that the sequence
$$
W_n(x):= \Big[\frac{1}{|x|^{\mu}}\ast  F(v_n)\Big],
$$
stays bounded in $L^{\infty}(\mathbb{R}^{2})$. Indeed, as $F$ is an increasing function, by $(\ref{h})$ we know that
$$
0 \leq W_n(x):= \Big[\frac{1}{|x|^{\mu}}\ast  F(v_n)\Big] \leq  \Big[\frac{1}{|x|^{\mu}}\ast  F(h)\Big]
$$
Hence claim will hold provided the function
$$
W(x)=\Big[\frac{1}{|x|^{\mu}}\ast  F(h)\Big]
$$
belongs to $L^{\infty}(\mathbb{R}^{2})$ and this is an immediate consequence of Lemma \ref{BNT1}.

\noindent For any $R>0$, $0<r\leq\frac{R}{2}$, let $\eta\in C^{\infty}(\mathbb{R}^2)$, $0\leq\eta\leq1$ with $\eta(x)=1$ if $|x|\geq R$ and $\eta(x)=0$ if $|x|\leq R-r$ and $|\nabla\eta|\leq\frac{2}{r}$.
For $L>0$, let
$$
v_{L,n}=
\left\{\begin{array}{l}
\displaystyle v_n(x),
\hspace{1.14mm} v(x)\leq L\\
\displaystyle L,\hspace{8.14mm}v_n(x)\geq L,
\end{array}
\right.
$$
and
$$
z_{L,n}=\eta^{2}v_{L,n}^{2(\gamma-1)}v_n\hspace{3.14mm}and\hspace{3.14mm}w_{L,n}=\eta v_nv_{L,n}^{\gamma-1}
$$
with $\gamma>1$ to be determined later. Taking $z_{L,n}$ as a test function, we obtain
\begin{equation}\label{E1}
\aligned
\int_{\mathbb{R}^{2}}&\eta^{2}v_{L,n}^{2(\gamma-1)}|\nabla v_n|^{2}+\int_{\mathbb{R}^{2}}\tilde{V}_{\vr_n}(x)| v_n|^{2}\eta^{2}v_{L,n}^{2(\gamma-1)}\\
&=-2(\gamma-1)\int_{\mathbb{R}^{2}}v_nv_{L,n}^{2\gamma-3}\eta^{2}\nabla v_n\nabla v_{L,n}+\int_{\mathbb{R}^{2}}W_n(x)f(v_n)\eta^{2}v_nv_{L,n}^{2(\gamma-1)}\\
&\hspace{1cm}-2\int_{\mathbb{R}^{2}}\eta v_{L,n}^{2(\gamma-1)}v_n\nabla v_n\nabla \eta.
\endaligned
\end{equation}
Using Lemma \ref{Trudinger-Moser}, for all $\beta, s>1$, we know that
\begin{equation}\label{E2}
\int_{\mathbb{R}^2}\big[ e^{ \beta4\pi v_n ^{2}} -1 \big]^s\leq \int_{\mathbb{R}^2}\big[ e^{ \beta4\pi |h|^{2}} -1 \big]^s=C < \infty \,\,\,\, \forall n \in \mathbb{N}.
\end{equation}
Let $t=\sqrt{s}$, $p>\frac{2t}{t-1}>2$ and $\gamma=\frac{p(t-1)}{2t}$, for any $\delta>0$, there exists $C(\delta, p, \beta)>0$ such that
$$
F(u)\leq \delta u^{2}+C(\delta, p, \beta) u^{p-1}\big[ e^{ \beta4\pi |u| ^{2}}
-1 \big], \,\,\, \forall u\in \R.
$$
Thus for $\delta$ sufficiently small, as $(W_n)$ is bounded in $L^{\infty}(\mathbb{R}^{2})$, gathering \eqref{E1} and Young's inequality, we get
\begin{equation}\label{E3}
\aligned
\int_{\mathbb{R}^{2}}\eta^{2}&v_{L,n}^{2(\gamma-1)}|\nabla v_n|^{2}+V_0\int_{\mathbb{R}^{2}}| v_n|^{2}\eta^{2}v_{L,n}^{2(\gamma-1)}\\
&\leq C\int_{\mathbb{R}^{2}} v_n^{p}\eta^{2}v_{L,n}^{2(\gamma-1)}\big[ e^{ \beta4\pi |h|^{2}}
-1 \big]+C\int_{\mathbb{R}^{2}}v^2_nv_{L,n}^{2(\gamma-1)}|\nabla \eta|^2.
\endaligned
\end{equation}
Using this fact, from \cite{AF} we have
$$
|w_{L,n}|^2_p\leq C\gamma^2\Big(C'+\Big[\int_{|x|\geq R-r}{v_n}^{(p-2)t}\big[ e^{ \beta4\pi |h|^{2}}
-1 \big]^t\Big]^{\frac1t}\Big)\Big[\int_{|x|\geq R-r}{{v_n}^{{\frac{2\gamma t}{t-1}}}}\Big]^{{\frac{t-1}{t}}}.
$$
By \eqref{E2} and H\"{o}lder's inequality, we know
$$
|w_{L,n}|^2_p\leq C\gamma^2\Big[\int_{|x|\geq R-r}{{v_n}^{{\frac{2\gamma t}{t-1}}}}\Big]^{{\frac{t-1}{t}}}.
$$
Now, following the same iteration arguments explored in \cite{AF}, we find
\begin{equation}\label{BD1}
|v_n|_{L^{\infty}(|x|\geq R)}\leq C|v_n|_{{p}(|x|\geq R/2)}.
\end{equation}
For $x_0\in B_R$, we can use the same argument taking $\eta\in C^{\infty}_0(\R^2,[0,1])$ with $\eta(x)=1$ if $|x-x_0|\leq\rho'$ and $\eta(x)=0$ if $|x-x_0|>2\rho'$ and $|\nabla\eta|\leq \frac{2}{\rho'}$, to prove that
\begin{equation}\label{BD2}
|v_n|_{L^{\infty}(|x-x_0|\leq \rho')}\leq C|v_n|_{{p}(|x|\leq2\rho')}.
\end{equation}
With \eqref{BD1} and \eqref{BD2}, by a standard covering argument it follows that
$$
|v_n|_{{\infty}}<C
$$
for some positive constant $C$. Then, using again the convergence of $(v_{n})$ to $v$ in $E$ in the right side of \eqref{BD1}, for each $\delta> 0$ fixed, there exists $R>0$ such that $|v_{n}|_{L^{\infty}(|x|\geq R)}<\delta, \forall n\in N.$
Thus,
$$
\lim\limits_{\mid x\mid\rightarrow\infty}{v_{n}}(x)=0 \quad \mbox{uniformly in} \quad n \in \mathbb{N},
$$
and the proof is complete.
\end{proof}

The last lemma establishes an estimate from below in terms of the $L^\infty$-norm of $\{v_n\}$.

\begin{lem}\label{MP}
There exists $\delta_0 >0$ such that $|v_{n}|_{\infty}\geq \delta_0$ for all $n\in \mathbb{N}$.
\end{lem}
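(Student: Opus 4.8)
The plan is to test the equation satisfied by $v_n$ with $v_n$ itself and to bound the resulting nonlinear term by a single power $v_n^{p+1}$, which is licit precisely because, by the preceding lemma, $M:=\sup_n|v_n|_\infty<\infty$. Recall that $v_n\geq 0$ solves
$$
-\Delta v_n+V_n(x)v_n=\Big[\frac{1}{|x|^{\mu}}\ast F(v_n)\Big]f(v_n),\qquad V_n(x)=V(\vr_n x+\vr_n\tilde y_n)\geq V_0,
$$
so that
$$
\|v_n\|_{V_n}^2:=\int_{\R^2}\big(|\nabla v_n|^2+V_n v_n^2\big)=\int_{\R^2}\Big[\frac{1}{|x|^{\mu}}\ast F(v_n)\Big]f(v_n)v_n .
$$
By $(f_2)$ together with the bound $0\leq f(s)\leq Ce^{4\pi s^2}$ from $(f_1)$, the quotient $f(s)/s^p$ stays bounded on $(0,M]$, so there is $C_M>0$ with $F(s)\leq C_Ms^{p+1}$ and $sf(s)\leq C_Ms^{p+1}$ for $0\leq s\leq M$; hence pointwise $F(v_n),\,v_nf(v_n)\leq C_Mv_n^{p+1}$.

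I would then combine the Hardy--Littlewood--Sobolev inequality (Proposition \ref{HLS}) with exponents $\tfrac{4}{4-\mu}$ and the interpolation of $L^q$, $q:=\tfrac{4(p+1)}{4-\mu}$, between $L^2$ and $L^\infty$:
$$
\|v_n\|_{V_n}^2\leq C_M^2\int_{\R^2}\Big[\frac{1}{|x|^{\mu}}\ast v_n^{p+1}\Big]v_n^{p+1}\leq C\,|v_n|_q^{2(p+1)}\leq C\,|v_n|_\infty^{\sigma}\,|v_n|_2^{4-\mu},
$$
where $\sigma:=2(p+1)-(4-\mu)$ and where I use the identity $\tfrac{4(p+1)}{q}=4-\mu$. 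Since $p>\tfrac{2-\mu}{2}$ by $(f_2)$, one has $\sigma>0$ and $q>2$, the latter making the interpolation legitimate. Using $V_n\geq V_0$, hence $|v_n|_2^2\leq V_0^{-1}\|v_n\|_{V_n}^2$, and dividing by $\|v_n\|_{V_n}^2>0$, I obtain
$$
1\leq C'\,|v_n|_\infty^{\sigma}\,\|v_n\|_{V_n}^{2-\mu}.
$$
To conclude I would invoke a uniform upper bound $\|v_n\|_{V_n}^2=\|u_n\|_{\vr_n}^2\leq a_2^2$: from $(f_3)$ (which gives $\mathfrak F'(u_n)[u_n]\geq 2K\mathfrak F(u_n)$) and $\langle I_{\vr_n}'(u_n),u_n\rangle=0$ one gets $\tfrac12(1-\tfrac1K)\|u_n\|_{\vr_n}^2\leq I_{\vr_n}(u_n)=c_{\vr_n}\to m_{V_0}$, which is bounded. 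Since $2-\mu>0$, this yields $|v_n|_\infty\geq\delta_0:=(C'a_2^{2-\mu})^{-1/\sigma}>0$.

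I do not expect a genuine obstacle here; the only points requiring attention are that the interpolation step needs $q>2$, which holds exactly because of the lower bound $p>\tfrac{2-\mu}{2}$ in $(f_2)$, and that one must use the uniform $L^\infty$ bound from the previous lemma in order to dominate $F(v_n)$ and $v_nf(v_n)$ by $v_n^{p+1}$. If one prefers, the same estimates can be phrased by contradiction: should $|v_{n_k}|_\infty\to 0$ along a subsequence, then $v_{n_k}\to 0$ in every $L^q$ with $q\geq 2$, so the right-hand side of the Nehari identity would tend to $0$, forcing $\|v_{n_k}\|_{V_{n_k}}\to 0$ and contradicting the uniform lower bound on the Nehari manifold in Lemma \ref{LN}.
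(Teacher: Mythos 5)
Your argument is correct, but it is a genuinely different route from the paper's. The paper simply recycles the non-vanishing estimate already obtained in the proof of Proposition \ref{Seq}: there exist $r,\delta>0$ with $\int_{B_r(\tilde y_n)}|u_n|^2\geq\delta$ (up to a subsequence), and since $v_n=u_n(\cdot+\tilde y_n)$ this gives $\delta\leq\int_{B_r(0)}|v_n|^2\leq|B_r|\,|v_n|_\infty^2$, hence $|v_n|_\infty\geq\sqrt{\delta/|B_r|}$ in one line, without invoking the $L^\infty$ bound, HLS, interpolation, or the Nehari manifold at all. Your proof trades that pre-existing concentration input for the uniform $L^\infty$ bound from the preceding lemma (needed to dominate $F(v_n)$ and $v_nf(v_n)$ by $C_Mv_n^{p+1}$, since $f$ has exponential growth), the Hardy--Littlewood--Sobolev inequality, the $L^2$--$L^\infty$ interpolation (licit because $q=\tfrac{4(p+1)}{4-\mu}>2$, i.e.\ $p>\tfrac{2-\mu}{2}$), and the uniform bound $\|u_n\|_{\vr_n}\leq a_2$ coming from $(f_3)$ and $c_{\vr_n}\to m_{V_0}$; the final contradiction variant via Lemma \ref{LN} is an equivalent repackaging. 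The exponent bookkeeping checks out ($2(p+1)/q=\tfrac{4-\mu}{2}$, $\sigma=2(p+1)-(4-\mu)>0$, and $2-\mu>0$ so the upper bound $a_2$ is what is needed after dividing by $\|v_n\|_{V_n}^2$). Your approach is more self-contained but heavier; the paper's is shorter because the concentration lower bound is already available at this point in the argument.
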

\begin{proof}
Recall that,
$$
 \delta \leq \int_{B_{r}(\tilde{y}_n)}|u_n|^{2},
$$
then
$$
\delta\leq\int_{B_{r}(0)}|v_n|^{2} \leq |B_r||v_n|^2_{\infty},
$$
from where it follows
$$
|v_n|_{\infty} \geq \delta_0,
$$
showing the lemma. \end{proof}

\vspace{0.5 cm}

{\bf{Concentration around  maxima.}}  Let $b_n$ denote a maximum point of $v_n$, we know it is a bounded sequence in $\R^2$. Thus, there is $R>0$ such that
$b_n\in B_{R}(0)$. Thus  the global maximum of $u_{{\vr}_{n}}$ is attained at $z_{n}=b_n+\tilde{y}_n$ and
$$
\vr_{n}{z}_{n}=\vr_{n}b_{n}+\vr_{n}\tilde{y}_n=\vr_{n}b_{n}+y_{n}.
$$
From the boundedness of $\{b_n\}$ we have
$$
\lim_{n \to \infty}z_{n}=y,
$$
which together with the continuity of $V$ yields
$$
\lim\limits_{ n \rightarrow\infty}V(\vr_{n}z_{n})=V_{0}.
$$

\noindent If $u_{\vr}$ is a positive solution of $(SNS^*)$ the function ${w_\vr}(x)={u_\vr}(\frac{x}{\vr})$
is a positive solution of \eqref{EC}. Thus, the maxima points ${\eta_\vr}$ and ${z_\vr}$ of respectively ${w_\vr}$ and ${u_\vr}$,
satisfy the equality ${\eta_\vr}=\vr{z_\vr}$ and in turn
$$
\lim_{\vr \rightarrow 0}V(\eta_{\vr})=V_{0}.
$$

\end{document}